\newcommand{\COne}{{\hyperlink{c1}{\ensuremath{\mathcal{C}_1}}}\xspace}
\newcommand{\CTwo}{{\hyperlink{c2}{\ensuremath{\mathcal{C}_2}}}\xspace}
\newcommand{\DZero}{{\hyperlink{d0}{\ensuremath{\mathcal{D}_0}}}\xspace}
\newcommand{\DOne}{{\hyperlink{d1}{\ensuremath{\mathcal{D}_1}}}\xspace}
\begin{document}

\title{An Improved Drift Theorem for Balanced Allocations\footnote{Some of the results of this paper were presented at SPAA 2022~\cite{LS22Batched}.}}

\author[]{Dimitrios Los\thanks{\texttt{dimitrios.los@cl.cam.ac.uk}} }
\author[]{Thomas Sauerwald\thanks{\texttt{thomas.sauerwald@cl.cam.ac.uk}}}
\affil[]{Department of Computer Science \& Technology, University of Cambridge}

\date{}

\maketitle

\begin{abstract}
In the balanced allocations framework, there are $m$ jobs (balls) to be allocated to $n$ servers (bins). The goal is to minimize the \textit{gap}, the difference between the maximum and the average load. 

Peres, Talwar and Wieder~(RSA 2015) used the hyperbolic cosine potential function to analyze a large family of allocation processes including the $(1+\beta)$-process and graphical balanced allocations. The key ingredient 
was to prove that the potential drops 
in every step, i.e., 
\emph{a drift inequality}. 

In this work we improve the drift inequality so that $(i)$ it is asymptotically tighter, $(ii)$~it assumes weaker preconditions, $(iii)$~it applies not only to processes allocating to more than one bin in a single step and $(iv)$~to processes allocating a varying number of balls depending on the sampled bin. 
Our applications include the processes of (RSA 2015), but also several new processes, and we believe that our techniques may lead to further results in future work.

	\medskip

\noindent \textbf{\textit{Keywords---}}  Balls-into-bins, balanced allocations, potential functions, drift theorem, heavily loaded, gap bounds, maximum load, memory, two-choices, weighted balls. \\
\textbf{\textit{AMS MSC 2010---}} 68W20, 68W27, 68W40, 60C05

\end{abstract}

\section{Introduction}

We study the classical problem of allocating $m$ balls (jobs) into $n$ bins (servers). This  framework also known as \textit{balls-into-bins} or \textit{balanced allocations}~\cite{ABKU99}~is a popular abstraction for various resource allocation and storage problems such as load balancing, scheduling or hashing (see surveys~\cite{MRS01,W17}).

For the simplest allocation process, called \OneChoice, each 
of the $m$ balls is allocated to a bin chosen independently and uniformly at random. It is well-known that the maximum load is $\Theta( \log n / \log \log n)$ \Whp\footnote{In general, with high probability refers to probability of at least $1 - n^{-c}$ for some constant $c > 0$.}~for $m=n$, and $m/n + \Theta( \sqrt{ (m/n) \cdot \log n})$ \Whp~for $m \gg n$. 

Azar, Broder, Karlin and Upfal~\cite{ABKU99} (and implicitly Karp, Luby and Meyer auf der Heide~\cite{KLM96}) proved that each ball is allocated to the lesser loaded of $d \geq 2$ bins chosen uniformly at random, then the \textit{gap} between the maximum and average load drops to $\log_d \log n + \Oh(1)$ \Whp, if $m=n$. Berenbrink, Czumaj, Steger and V\"ocking~\cite{BCSV06} proved that the same bound also applies for arbitrary $m \geq n$. This dramatic improvement from $d=1$ (\OneChoice) to $d=2$ (\TwoChoice) is known as ``power of two choices''.

Several variants of the \TwoChoice process have been studied making different trade-offs between number of samples and guarantee on the gap. Mitzenmacher~\cite{M96} introduced the \OnePlusBeta-process, which in each step performs \TwoChoice with probability $\beta$ and \OneChoice otherwise. This process is more \textit{sample efficient} than \TwoChoice as it samples $(1+\beta)$ bins in expectation and was recently shown to have an asymptotically better gap than \TwoChoice in settings with \textit{outdated information}~\cite{LS23Batched}. Peres, Talwar and Wieder~\cite{PTW15} proved an $\Oh(\log n/\beta + \log(1/\beta)/\beta)$ bound on the gap, where for $\beta = o(1/\poly(n))$ the second term dominates; and also an $\Omega(\log n/\beta)$ lower bound for any $\beta$ bounded away from $1$. The \OnePlusBeta-process has been used to analyze \textit{population protocols}~\cite{AAG18,AGR21}, \textit{distributed data structures}~\cite{ABKLN18,AK0N17} and \textit{online carpooling}~\cite{GK0020}.

Another sample efficient variant of \TwoChoice is the family of \TwoThinning processes~\cite{FG21,FGL21}, where two bins are sampled uniformly at random and the ball is allocated in an online fashion. In \cite{FGL21}, a lower bound of $\Omega(\log n/\log \log n)$ on the gap was shown for any \TwoThinning process  and \cite{FGL21} designed an adaptive process that achieves this.  In~\cite{LS22Queries}, the $\Quantile(\delta)$ process was studied which is a special instance of \TwoThinning where the first sampled bin is accepted only if it is within the $(1-\delta) \cdot n$ lightest bins. Extensions of \TwoThinning making use of $d \geq 2$ samples in each step have been studied in~\cite{LS22Queries,BKSS13,CS01,FL20}.

Several of these processes have been analyzed in the \Weighted setting~\cite{TW07,PTW15}, where balls have \textit{weights} sampled from a distribution with a finite moment generating function. In particular, the upper bounds for the \OnePlusBeta-process still hold in the presence of weights.

In~\cite{LSS21}, the \Twinning process was studied, where a single bin $i \in [n]$ is sampled in each step $t \geq 0$ and \textit{two} balls are allocated if $x_i^t < t/n$ and \textit{one} ball otherwise. This process is more sample efficient than \OneChoice, but also achieves \Whp~a $\Theta(\log n)$ gap.

Mitzenmacher, Prabhakar and Shah~\cite{MPS02} studied a balanced allocation process which takes one bin sample in each step and in addition can maintain one bin in a \textit{cache}. In~\cite{LSS23}, a relaxed version of this process, called \ResetMemory was studied where the cache is \textit{reset} once every $\Oh(1)$ steps.

In~\cite{PTW15}, the analysis works for a wide range of processes defined through a \textit{probability allocation vector} $p^t$, where $p_i^t$ gives the probability to allocate to the $i$-th heaviest bin in step $t \geq 0$. Consequently, the analysis not only applies to the $(1+\beta)$-process, but through a majorization argument it also applies to \textit{graphical balanced allocation}. In this setting,  the bins are vertices of a graph $G = ([n], E)$ and in each step one edge is sampled uniformly at random and the ball is allocated to the least loaded of the two adjacent bins. For graphs with conductance $\phi$, they prove an $\Oh(\log n/\phi)$ bound on the gap. However, due to the involved majorization argument, the obtained gap bounds do not apply for weights. %

In the \Batched setting~\cite{BCEFN12,BFKMNW18}, the loads of the bins are \textit{periodically updated} every $b$ steps, capturing a setting of outdated information. Berenbrink, Czumaj, Englert, Friedetzky and Nagel~\cite{BCEFN12}, proved an $\Oh(\log n)$ upper bound on the gap for \TwoChoice when $b = n$. This has been recently refined for \TwoChoice and generalized for various processes and $b \geq n$ in~\cite{LS22Batched,LS23Batched}. In the \textit{parallel setting}~\cite{LPY19,LW11,GM11,S96,ACMR98}, allocations are made after a few rounds of communications between balls and bins.

\subsection{Our Results}

In this work we refine and generalize the drift theorem in~\cite{PTW15} (as detailed in \cref{sec:refined_drift_thm}) and provide a self-contained proof in \cref{sec:drift_thm_proof}. Then, we apply it to obtain the following results:
\begin{itemize}\itemsep0pt
    \item For the \OnePlusBeta-process, we prove an upper bound on the gap of $\Oh(\log n/\beta)$ (\cref{thm:one_plus_beta_upper_bound}).
    This is tight for any $\beta \in (0, 1)$ that is bounded away from $1$ (\cref{rem:one_plus_beta_lower_bound}). The same upper bound on the gap holds also in the \Weighted setting, which for constant $\beta \in (0,1)$ matches a general lower bound of $\Omega(\log n)$ if the weights are sampled from an exponential distribution with constant mean (\cref{rem:weighted_lower_bound}).
    \item For graphical balanced allocation on graphs with conductance $\phi$, we extend the $\Oh(\log n/\phi)$ bound on the gap to the weighted case (\cref{thm:weighted_graphical}), making progress on \cite[Open Problem 1]{PTW15}, which states:
\begin{quote}
\textit{Graphical processes in the weighted case. The analysis of section 3.1 goes through the majorization approach and therefore applies only to the unweighted case. It would be interesting to analyze such processes in the weighted case as well.}
\end{quote}
\item For the $\Quantile(\delta)$ process we prove an~$\Oh(\log n/\delta)$ bound on the gap for any $\delta \leq \frac{1}{2}$ and  $\Oh(\log n/(1-\delta))$ for any $\delta > \frac{1}{2}$ (\cref{thm:quantile_upper_bound}), including non-constant $\delta$. %
    \item We analyze the \TwinningWithQuantile process, a variant of \Twinning~\cite{LSS21} based on quantiles, and we prove an $\Oh(\log n)$ bound on the gap (\cref{thm:twinning_with_quantile_upper_bound}), which by \cref{cor:twinning_with_quantile_lower_bound} is tight. 
    \item We also introduce \QuantileWithPenalty, a variant of the \TwoThinning process with \textit{penalties}, where we allocate one ball to the first sampled bin and two balls to the second sampled bin. Somewhat surprisingly, despite penalizing the second sample, we also prove \Whp~an $\Oh(\log n)$ bound on the gap for an instance of this process with quantiles (\cref{thm:twinning_with_penalty_upper_bound}), which by \cref{cor:twinning_with_penalty_lower_bound} is tight and within a $\Theta(\log \log n)$ factor from the bound in the normal \TwoThinning setting.
    \item For \ResetMemory, we prove an~$\Oh(\log n)$ bound on the gap, which we also show to be tight in \cref{cor:reset_memory_lower_bound}. This upper bound also applies in the \Weighted setting. 
    \item Next, we prove an $\Oh((b/n) \cdot \log n)$ bound on the gap for a large family of processes in the \Batched setting with $b \geq n$ and unit weights. This is a subset of the results in \cite{LS22Batched,LS23Batched} and it is included to demonstrate the applicability of the improved drift inequality.
    \item Finally, the new drift theorem allows us to deduce that the expectation of the hyperbolic cosine potential is $\Oh(n)$ for a large family of processes. This allows us to prove bounds on the number of bins above a certain load, which is useful in the application of layered induction~\cite{ABKU99,LS22Batched,LS23Batched,LS22Queries}. In \cref{lem:lower_bound_overload_height,lem:lower_bound_underload_height}, we show that these are in some sense tight for a large family of processes.
\end{itemize}

\paragraph{Organization.} This work is structured as follows. In \cref{sec:notation_and_processes}, we introduce the notation for balanced allocation processes and define formally the various processes and settings used in this work. In \cref{sec:refined_drift_thm}, we present the improved drift theorem and compare it to that in~\cite{PTW15}. In \cref{sec:drift_thm_proof}, we prove the drift theorem. Next, in \cref{sec:three_useful_lemmas} we prove some auxiliary lemmas that help verify the preconditions of the drift inequality. In \cref{sec:applications}, we present the application of the drift inequality to analyze the \OnePlusBeta-process, graphical balanced allocation, \Quantile,  \TwinningWithQuantile, \QuantileWithPenalty, \ResetMemory and the \Batched setting for a wide family of processes. In \cref{sec:lower_bounds}, we prove lower bounds for the aforementioned processes. Finally, in \cref{sec:conclusion}, we conclude with some open problems.

\section{Notation, Settings and Processes} \label{sec:notation_and_processes}

\subsection{Notation}

We consider processes that allocate balls into $n$ bins, labeled $[n] := \{ 1, \ldots, n \}$.
The \textit{load vector} at step $t$, that is, after $t$ allocations, is $x^{t}=(x_1^{t},x_2^{t},\ldots,x_n^{t})$ starting with $x_i^{0} = 0$ for all $i \in [n]$.
Also $y^{t}=(y_1^{t},y_2^{t},\ldots,y_n^{t})$ will be the permuted load vector, sorted non-increasingly in load. In this load vector, each bin has a \textit{rank} $\Rank^{t}(i)$, where $\Rank^{t}$ forms a permutation of $[n]$ and satisfies
$
   y_{\Rank^{t}(i)}^{t}=x_i^{t}.
$
Following previous work, we analyze allocation processes in terms of the 
\[
\Gap(t):= \max_{i \in [n]} x_i^{t} - \frac{t}{n} = y_1^{t},
\]
i.e., the difference between maximum and average load at time $t \geq 0$. In all of our bounds, we also bound the difference between the maximum and minimum load. We also define the \textit{height} of a ball as $h\geq 1$ if it is the $h$-th ball added to the bin. Next we define $\mathfrak{F}^{t}$, the \textit{filtration} corresponding to the first $t$ allocations of the process (so in particular, $\mathfrak{F}^{t}$ reveals $x^{t}$). A \textit{probability vector} $p \in \R^n$ is any vector satisfying $\sum_{i = 1}^n p_i = 1$ and $p_i \in [0,1]$ for all $i \in [n]$. Any \textit{allocation process} is defined by a \emph{probability allocation vector} $p^t := p^t(\mathfrak{F}^t)$, which is the probability vector where $p_i^t$ is probability of allocating a ball to the $i$-th heaviest bin in step $t$. For some of our analysis, it will be convenient to merge multiple steps into \textit{rounds}.

For two probability allocation vectors $p$ and $q$ (or analogously, for two sorted load vectors), we say that $p$ \textit{majorizes} $q$ if 
$
 \sum_{i=1}^{k} p_i \geq \sum_{i=1}^k q_i,
$ for all $1 \leq k \leq n$. We denote this by $p\succeq q$.

\begin{thm}[Majorization, cf.~{\cite[Theorem 3.5]{ABKU99}}] \label{thm:majorization}
Consider any two processes $\mathcal{P}$ and $\mathcal{Q}$ with probability allocation vectors $p^t$ and $q^t$ respectively, such that $p^t \succeq q^t$ for every step $t \geq 0$. Then, $y_{\mathcal{P}}^t \succeq y_{\mathcal{Q}}^t$ and in particular $(y_{\mathcal{P}}^t)_1 \geq (y_{\mathcal{Q}}^t)_1$ and $(y_{\mathcal{P}}^t)_n \leq (y_{\mathcal{Q}}^t)_n$.
\end{thm}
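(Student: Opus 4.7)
The plan is to argue by induction on $t$, producing a step-by-step coupling of the two processes under which the sorted load vectors stay in the majorization order deterministically. The base case $t = 0$ is immediate since both load vectors are identically zero.

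For the inductive step, I would first turn the hypothesis $p^t \succeq q^t$ into a coupling of the two allocations in step $t$. Since $\sum_{l \leq k} p_l^t \geq \sum_{l \leq k} q_l^t$ for every $k$, the inverse-CDF construction---draw a single $U \sim \mathrm{Unif}(0,1)$ and set $I_{\mathcal{P}} = \min\{i : \sum_{l \leq i} p_l^t \geq U\}$ and $I_{\mathcal{Q}} = \min\{i : \sum_{l \leq i} q_l^t \geq U\}$---produces rank choices with the correct marginals and satisfying $I_{\mathcal{P}} \leq I_{\mathcal{Q}}$ almost surely. Process $\mathcal{P}$ then places its ball into its $I_{\mathcal{P}}$-th heaviest bin and $\mathcal{Q}$ into its $I_{\mathcal{Q}}$-th heaviest.

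It then suffices to establish the deterministic claim: if $u \succeq v$ for non-increasingly sorted integer vectors of equal sum, and $u'$ (resp.\ $v'$) is obtained by adding $1$ to position $i$ of $u$ (resp.\ position $j \geq i$ of $v$) and re-sorting, then $u' \succeq v'$. For each $k \in [n]$, the change $\sum_{l \leq k} u'_l - \sum_{l \leq k} u_l$ lies in $\{0, 1\}$ and equals $1$ iff $u_i \geq u_k$ (in particular whenever $i \leq k$); the same holds for $v'$. A case split on whether these increments for $u$ and $v$ are $0$ or $1$ disposes of most configurations directly from the inductive hypothesis. The delicate case is $k < i \leq j$ with $u_i < u_k$ (so the $u$-bin stays out of the top-$k$) and $v_j = v_k$ (so the $v$-bin enters it): the constraints, combined with the sorted structure and $u \succeq v$, show that a putative equality $\sum_{l \leq k} u_l = \sum_{l \leq k} v_l$ propagates to the bottom part $\{k+1, \ldots, n\}$ (forcing $u_{k+1} = v_{k+1} = v_k$ and then further equalities) and eventually contradicts the majorization of partial sums at the first index where $u$ drops below $v_k$; integrality then supplies the needed slack of $1$. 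I expect this last case to be the main obstacle, since it is the only place where the ordering constraints and the majorization gap interact non-trivially. Finally, $(y_{\mathcal{P}}^t)_1 \geq (y_{\mathcal{Q}}^t)_1$ is the $k = 1$ majorization inequality, and $(y_{\mathcal{P}}^t)_n \leq (y_{\mathcal{Q}}^t)_n$ follows by subtracting the $k = n - 1$ inequality from $\sum_l (y_{\mathcal{P}}^t)_l = t = \sum_l (y_{\mathcal{Q}}^t)_l$, which holds because each process allocates exactly one ball per step.
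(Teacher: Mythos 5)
The paper does not prove this theorem; it is imported verbatim from \cite[Theorem 3.5]{ABKU99}, and your argument is precisely the standard proof given there: couple the two processes through a single uniform random variable via the inverse-CDF construction (so that $I_{\mathcal{P}} \leq I_{\mathcal{Q}}$ almost surely), and then invoke the deterministic lemma that adding one ball to position $i$ of $u$ and to position $j \geq i$ of $v$ preserves $u \succeq v$. Your treatment of the one delicate case ($k < i \leq j$, $u_i < u_k$, $v_j = v_k$) is sound: assuming prefix-sum equality at $k$ forces $u_k = u_{k+1} = v_k = v_{k+1}$, the equalities propagate index by index up to $i$, and then $u_i < u_k = v_k = v_i$ violates majorization at $i$, so integrality yields the needed slack of $1$. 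The only cosmetic slip is that the normalized vectors $y^t$ sum to $0$ rather than $t$, which does not affect the subtraction argument for $(y_{\mathcal{P}}^t)_n \leq (y_{\mathcal{Q}}^t)_n$.
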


Many statements in this work hold only for sufficiently large $n$, and several constants are chosen generously with the intention of making it easier to verify some technical inequalities.

\subsection{Settings}

\paragraph{The \Weighted setting.} We will now extend the definitions to \emph{weighted balls} into bins. To this end, let $w^t \geq 0$ be the weight of the $t$-th ball to be allocated ($t \geq 1$). By $W^{t}$ we denote the total weights of all balls allocated after the first $t \geq 0$ allocations, so $W^t := \sum_{i=1}^n x_i^{t} = \sum_{s=1}^t w^s$. The \textit{normalized loads} are $\tilde{x}_i^{t} := x_i^t - \frac{W^t}{n}$, and with $y_i^t$ being again the decreasingly sorted, normalized load vector, we have $\Gap(t)=y_1^t$. 

In the $\FiniteMgf(\zeta)$ setting, the weight of each ball will be drawn independently from a fixed distribution $\mathcal{W}$ over $[0,\infty)$. Following~\cite{PTW15}, we assume that the distribution $\mathcal{W}$ satisfies:
\begin{itemize}
  \item $\ex{\mathcal{W}} = 1$.
  \item $\ex{e^{\zeta \mathcal{W}} } = \Theta(1)$ for $\zeta := \zeta(n) \in (0, 1]$.
\end{itemize}
Specific examples of distributions satisfying above conditions (after scaling) are the geometric, exponential, binomial and Poisson distributions.

Similar to the arguments in~\cite{PTW15}, the above two assumptions imply that:
\begin{lem} \label{lem:bounded_weight_moment}
Consider any $\FiniteMgf(\zeta)$ distribution for any $\zeta \in (0, 1]$. Then, there exists $S := S(\zeta) \geq 1/\zeta$, such that for any  $\gamma \in (0, \zeta/2]$ and any $\ell \in [-1,1]$,
\[
\Ex{e^{\gamma \cdot \ell \cdot \mathcal{W}}} \leq 1 + \ell \cdot \gamma + S \cdot \ell^2 \cdot \gamma^2.
\]
\end{lem}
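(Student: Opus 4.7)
}

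The natural starting point is the exact decomposition $e^{x}=1+x+x^{2}g(x)$, where
\[
    g(x):=\frac{e^{x}-1-x}{x^{2}}=\sum_{k\geq 0}\frac{x^{k}}{(k+2)!}\qquad(x\neq 0),
\]
extended continuously by $g(0)=\tfrac{1}{2}$. Applying this with $x=\gamma\cdot\ell\cdot\mathcal{W}$ and using $\ex{\mathcal{W}}=1$, I get
\[
    \Ex{e^{\gamma\ell\mathcal{W}}}=1+\gamma\ell+\gamma^{2}\ell^{2}\cdot\Ex{\mathcal{W}^{2}\cdot g(\gamma\ell\mathcal{W})},
\]
so the whole task reduces to bounding $\Ex{\mathcal{W}^{2}\cdot g(\gamma\ell\mathcal{W})}$ by some quantity $S=S(\zeta)\geq 1/\zeta$ that depends only on $\zeta$, uniformly in $\gamma\in(0,\zeta/2]$ and $\ell\in[-1,1]$.

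The key structural fact I would establish next is that $g$ is non-decreasing on $\mathbb{R}$. For $x\geq 0$ this is immediate from the power-series representation (all coefficients positive). For $x\leq 0$, one can either extend monotonicity by a short calculus check (writing $g'(x)=[(x-2)e^{x}+x+2]/x^{3}$ and verifying the numerator and $x^{3}$ have the same sign), or simply observe that $g(x)\leq \tfrac{1}{2}$ on $(-\infty,0]$ (which follows from $e^{x}\leq 1+x+x^{2}/2$ for $x\leq 0$, a standard Taylor-remainder argument via $\frac{d^{3}}{dx^{3}}(e^{x}-1-x-x^{2}/2)=e^{x}>0$) and that $g(x)\geq \tfrac{1}{2}$ on $[0,\infty)$. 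Either way, for any $\ell\in[-1,1]$ and $\gamma\in(0,\zeta/2]$, since $\mathcal{W}\geq 0$,
\[
    \gamma\ell\mathcal{W}\leq \tfrac{\zeta}{2}\cdot \mathcal{W}
    \quad\Longrightarrow\quad
    g(\gamma\ell\mathcal{W})\leq g\bigl(\tfrac{\zeta}{2}\mathcal{W}\bigr).
\]

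From here the calculation is mechanical. Multiplying by $\mathcal{W}^{2}$ and using the definition of $g$,
\[
    \mathcal{W}^{2}\cdot g\bigl(\tfrac{\zeta}{2}\mathcal{W}\bigr)
    = \frac{4}{\zeta^{2}}\cdot\bigl(e^{\zeta\mathcal{W}/2}-1-\tfrac{\zeta}{2}\mathcal{W}\bigr),
\]
and taking expectations,
\[
    \Ex{\mathcal{W}^{2}\cdot g(\gamma\ell\mathcal{W})}\leq \frac{4}{\zeta^{2}}\Bigl(\Ex{e^{\zeta\mathcal{W}/2}}-1-\tfrac{\zeta}{2}\Bigr).
\]
To convert the hypothesis $\Ex{e^{\zeta\mathcal{W}}}=\Theta(1)$ into a bound on $\Ex{e^{\zeta\mathcal{W}/2}}$, I would apply Cauchy--Schwarz (or Jensen in the form $\Ex{Y^{1/2}}\leq \Ex{Y}^{1/2}$) to get $\Ex{e^{\zeta\mathcal{W}/2}}\leq \sqrt{\Ex{e^{\zeta\mathcal{W}}}}=\Theta(1)$. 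Setting
\[
    S:=\max\!\left\{\tfrac{1}{\zeta},\ \tfrac{4}{\zeta^{2}}\bigl(\Ex{e^{\zeta\mathcal{W}/2}}-1-\tfrac{\zeta}{2}\bigr)\right\}
\]
then gives a quantity depending only on $\zeta$, satisfying $S\geq 1/\zeta$ by construction, and establishing the claimed inequality.

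The only genuinely nontrivial step is the uniform domination $g(\gamma\ell\mathcal{W})\leq g(\tfrac{\zeta}{2}\mathcal{W})$; once that is in hand, the rest is a direct computation using the hypotheses on $\mathcal{W}$. If one prefers to avoid establishing monotonicity of $g$ on all of $\mathbb{R}$, splitting into the cases $\ell\geq 0$ (use monotonicity on $[0,\infty)$) and $\ell\leq 0$ (use $g\leq 1/2\leq g(\tfrac{\zeta}{2}\mathcal{W})$) suffices and keeps the argument completely elementary.
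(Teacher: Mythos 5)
Your proposal is correct, and it takes a genuinely different route from the paper. The paper applies Taylor's theorem with the Lagrange (mean-value) remainder to the moment generating function $M(z) = \Ex{e^{z\mathcal{W}}}$, which produces the term $M''(\xi) = \ex{\mathcal{W}^2 e^{\xi \mathcal{W}}}$ for some intermediate $\xi$; this is then bounded by Cauchy--Schwarz, AM--GM, and a separate auxiliary lemma controlling $\ex{\mathcal{W}^4}$ in terms of $\ex{e^{\zeta\mathcal{W}}}$ (which is why the paper's $S$ contains the term $\bigl(\tfrac{8}{\zeta}\log\tfrac{8}{\zeta}\bigr)^4$). You instead use the exact pointwise identity $e^x = 1 + x + x^2 g(x)$ with the Bennett-type function $g(x) = (e^x-1-x)/x^2$, exploit its monotonicity to dominate $g(\gamma\ell\mathcal{W})$ by $g(\tfrac{\zeta}{2}\mathcal{W})$ uniformly in $\gamma$ and $\ell$, and reduce everything to $\Ex{e^{\zeta\mathcal{W}/2}}$, which Jensen controls by $\sqrt{\Ex{e^{\zeta\mathcal{W}}}}$. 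Your monotonicity claim for $g$ is correct (the sign analysis of $h(x) := (x-2)e^x + x + 2$ via $h''(x) = xe^x$ goes through), and the fallback case split on the sign of $\ell$ is a clean way to avoid it entirely. What your approach buys: it bypasses the fourth-moment detour altogether and yields a quantitatively better constant, $S = \Oh(\zeta^{-2})$ rather than the paper's $\Oh\bigl(\zeta^{-4}\log^4(1/\zeta)\bigr)$; since all downstream gap bounds scale linearly in $S$, this is a genuine (if constant-factor-in-$1/\zeta$) improvement. The paper's approach is arguably more ``off the shelf'' in that it only invokes Taylor's theorem and standard inequalities without needing the monotonicity of $g$, but both arguments are elementary and self-contained.
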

As this parameter $S$ is used in most of the upper bounds involving the $\FiniteMgf(\zeta)$ weights, we often refer to the setting as $\FiniteMgf(S)$.
\begin{proof}
This proof closely follows the argument in~\cite[Lemma 2.1]{PTW15}. Let $M(z) = \Ex{ e^{z \mathcal{W}}}$, then using Taylor's Theorem (mean value form remainder),  for any $z \in [-\gamma,\gamma]$ there exists $\xi \in [-\gamma, \gamma]$ such that
\[
M(z) = M(0) + M'(0) \cdot z + M''(\xi) \cdot \frac{1}{2} \cdot z^2 = 1 + z + M''(\xi) \cdot \frac{1}{2} \cdot z^2.
\]
By the assumptions on $\gamma$ and $\zeta$,
\begin{align*}
M''(\xi) &= \ex{\mathcal{W}^2 e^{\xi \mathcal{W}}} \\ &\stackrel{(a)}{\leq} \sqrt{\ex{\mathcal{W}^4 } \cdot \ex{e^{2\xi \mathcal{W}} }} \\ &\stackrel{(b)}{\leq} \frac{1}{2} \cdot \left(
 \ex{\mathcal{W}^4} + \ex{e^{2\xi \mathcal{W}} }
\right) \\
& \stackrel{(c)}{\leq} \frac{1}{2} \cdot 
\left(
 \left( \frac{8}{\zeta} \cdot \log\left( \frac{8}{\zeta} \right) \right)^{4} + \ex{ e^{\zeta \mathcal{W}}} + \ex{ e^{2\zeta \mathcal{W}}}
\right),
\end{align*}
where $(a)$ uses the Cauchy-Schwartz inequality $| \Ex{ X \cdot Y}| \leq \sqrt{\Ex{X^2} \Ex{Y^2} }$ for random variables $X$ and $Y$, $(b)$ uses the AM-GM inequality, and $(c)$ uses \cref{lem:s_bound}. Now defining 
\[
S:=\max\left\lbrace  \left(\frac{8}{\zeta} \cdot \log\left( \frac{8}{\zeta} \right) \right)^{4} , \ex{ e^{\zeta \mathcal{W}}} + \ex{ e^{2\zeta \mathcal{W}}} \right\rbrace \geq 1,
\]
using that $\zeta \leq 1$ and choosing $z:=\ell \cdot \gamma$,
the lemma follows.
\end{proof}

For some of the processes we consider also a \Weighted setting where the weight distribution depends on the bin chosen for allocation (cf.~the \TwinningWithQuantile and \QuantileWithPenalty processes).

\paragraph{The \Batched setting.} In the \Batched setting, the balls are allocated in \textit{batches} of size $b \in \N_+$ using the probability allocation vector at the beginning of the batch. In this work, we only consider the \Batched setting with unit weight balls.

\begin{framed}
\vspace{-.45em} \noindent
\underline{\Batched Setting}: \\
\textsf{Parameters:} Batch size $b \in \N_+$, probability allocation vector $p^t$.
\\
\textsf{Iteration:} At step $t = 0, b, 2b, \ldots$:
\begin{enumerate}\itemsep0pt
    \item Sample $b$ bins $i_1,i_2,\ldots,i_b$ independently from $[n]$ following $p^t$.
    \item Update 
    \[
    \tilde{y}_{i}^{t+b} := y_{i}^{t} + \sum_{j=1}^b \mathbf{1}_{i_j=i} - \frac{b}{n}, \quad \text{for each $i \in [n]$}.
    \]
    \item Let $y^{t+b}$ be the vector $\tilde{y}^{t+b}$, sorted non-increasingly.
\end{enumerate}
\vspace{-0.4cm}
\end{framed}

\subsection{Processes}

In this part, we include the formal description of all allocation processes considered in this work. Recall that $w^{t+1}$ is the weight of the $(t+1)$-th ball to be allocated at step $t \geq 0$.

\begin{samepage}
\begin{framed}
\vspace{-.45em} \noindent
\underline{\OneChoice Process:} \\
\textsf{Iteration:} At step $t \geq 0$, sample one bin $i \in [n]$ independently and uniformly at random. Then, update:  
    \begin{equation*}
     x_{i}^{t+1} = x_{i}^{t} + w^{t+1}.
 \end{equation*}\vspace{-1.5em}
\end{framed}
\end{samepage}
\noindent We continue with a formal description of the \TwoChoice process.
\begin{samepage}
\begin{framed}
\vspace{-.45em} \noindent
\underline{\TwoChoice Process:} \\
\textsf{Iteration:} At step $t \geq 0$, sample two bins $i_1, i_2 \in [n]$, independently and uniformly at random. Let $i \in \{i_1, i_2 \}$ be such that $x_{i}^{t} = \min\{ x_{i_1}^t,x_{i_2}^t\}$, favoring bins with a higher index. Then, update:  
    \begin{equation*}
     x_{i}^{t+1} = x_{i}^{t} + w^{t+1}.
 \end{equation*}\vspace{-1.5em}
\end{framed}
\end{samepage}
It is immediate that the probability allocation vector of \TwoChoice is
\begin{equation*}
    p_{i} = \frac{2i-1}{n^2}, \qquad \mbox{ for all $i \in [n]$.}
\end{equation*}

Following~\cite{M96}, we recall the definition of the \OnePlusBeta-process which \textit{interpolates} between \OneChoice and \TwoChoice:
\begin{samepage}
\begin{framed}
\vspace{-.45em} \noindent
\underline{($1+\beta$)-Process:}\\
\textsf{Parameter:} A \textit{mixing factor} $\beta \in (0,1]$.\\
\textsf{Iteration:} At step $t \geq 0$, sample two bins $i_1, i_2 \in [n]$, independently and uniformly at random. Let $i \in \{ i_1, i_2 \}$ be such that $x_{i}^{t} = \min\big\{ x_{i_1}^t,x_{i_2}^t \big\}$, favoring bins with a higher index. Then, update:  
    \begin{equation*}
    \begin{cases}
     x_{i}^{t+1} = x_{i}^{t} + w^{t+1} & \mbox{with probability $\beta$}, \\
      x_{i_1}^{t+1} = x_{i_1}^{t} + w^{t+1} & \mbox{otherwise}.
   \end{cases}
 \end{equation*}\vspace{-1.em}
\end{framed}
\end{samepage}

The probability allocation vector of the \OnePlusBeta-process is given by:
\begin{equation*}
    p_{i} =
    (1-\beta) \cdot \frac{1}{n} + \beta \cdot \frac{2i-1}{n^2}, \qquad \mbox{ for all $i \in [n]$.}
\end{equation*}

We now define a variant of the \TwoChoice process with incomplete information, called the \Quantile process, which is also an instance of \TwoThinning (see \cref{fig:quantile_process}).

\begin{figure}[H]
    \centering
    \includegraphics[scale=0.45]{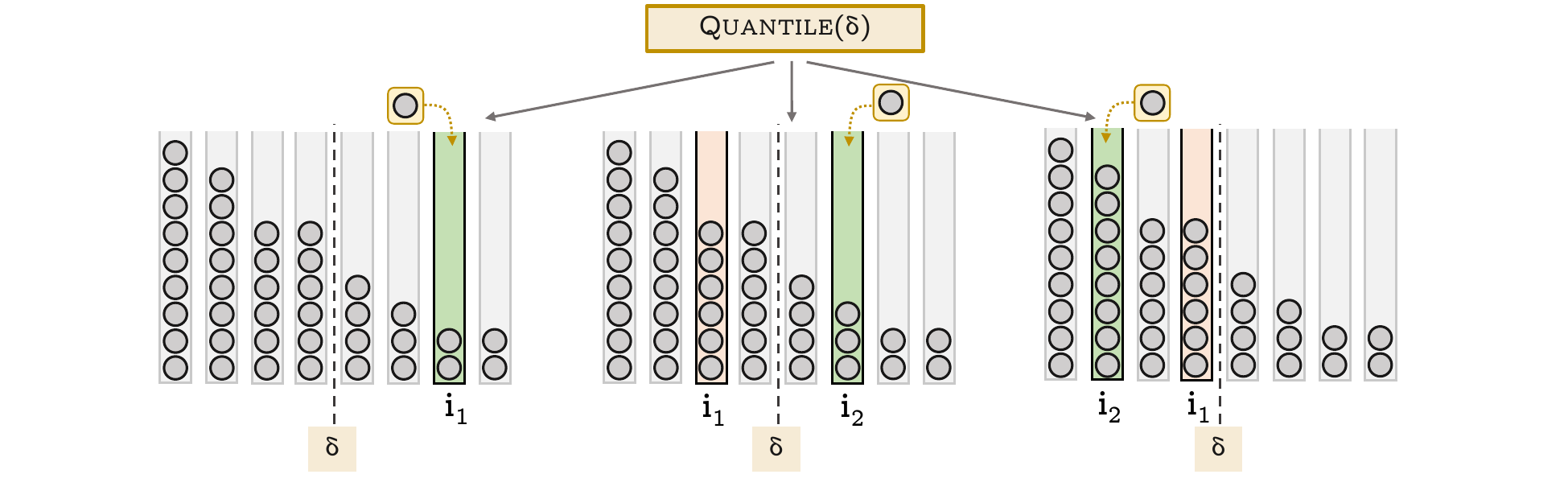}
    \caption{The two cases for the $\Quantile(\delta)$ process: (\textbf{left}) Allocate to the first sample if its rank is above $n \cdot \delta$, (\textbf{center}) allocate to the second sample (in this case a light bin) if the rank of the first bin was at most $n \cdot \delta$ and (\textbf{right}) allocate to the second sample which could also be a heavy bin.}
    \label{fig:quantile_process}
\end{figure}
\begin{framed}
	\vspace{-.45em} \noindent
	\underline{$\Quantile(\delta)$~Process:}\\
	\textsf{Iteration:} At step $t \geq 0$, sample two bins $i_1, i_2 \in [n]$ independently and uniformly at random. Then, update:
	\begin{equation*} 
		\begin{cases}
			x_{i_1}^{t+1} = x_{i_1}^{t} + w^{t+1} & \mbox{if $\Rank^t(i_1) > n \cdot \delta$}, \\
			x_{i_2}^{t+1} = x_{i_2}^{t} + w^{t+1}  & \mbox{otherwise}.
		\end{cases}
	\end{equation*}
 \vspace{-0.4cm}
\end{framed}

The probability allocation vector for the $\Quantile(\delta)$ process is given by:
\begin{align} \label{eq:quantile_prob_vector}
p_i = \begin{cases}
\frac{\delta}{n} & \text{if }i \leq n \cdot \delta, \\
\frac{1 + \delta}{n} & \text{otherwise}.
\end{cases}
\end{align}

The \Twinning process was introduced and analyzed in~\cite{LSS21}, and works by sampling one bin $i \in [n]$ uniformly at random and allocating \textit{two} balls if $x_i^t \leq \frac{t}{n}$, otherwise allocating \textit{one} ball. Here, we consider a variant of this process, so that the decision function is $\Rank^t(i) > n \cdot \delta$ instead of $x_i^t < \frac{t}{n}$ (see \cref{fig:twinning_with_quantile}).

\begin{framed}
	\vspace{-.45em} \noindent
	\underline{$\TwinningWithQuantile(\delta)$~Process:}\\
	\textsf{Iteration:} At step $t \geq 0$, sample a bin $i \in [n]$ independently and uniformly at random. Then, update:
	\begin{equation*} x_{i}^{t+1} =
		\begin{cases}
			x_{i}^{t} + 2 & \mbox{if $\Rank^t(i) > n \cdot \delta$}, \\
			x_{i}^{t} + 1  & \mbox{otherwise}.
		\end{cases}
	\end{equation*}
 \vspace{-0.4cm}
\end{framed}

Note that this process allocates $1 \cdot \delta^2 + 2 \cdot (1 - \delta^2) = 2 - \delta^2$ balls per sample in expectation, so it is more sample efficient than \OneChoice.\footnote{In comparison to the original \Twinning process~\cite{LSS21}, it is sample-efficient in every step (not only when the quantile of the average load has stabilized), but it requires knowledge of the ordering of the bins.} In \cref{sec:twinning_with_quantile}, we show that this process also has an $\Oh(\log n)$ gap.

\begin{figure}[H]
    \centering
    \includegraphics[scale=0.45]{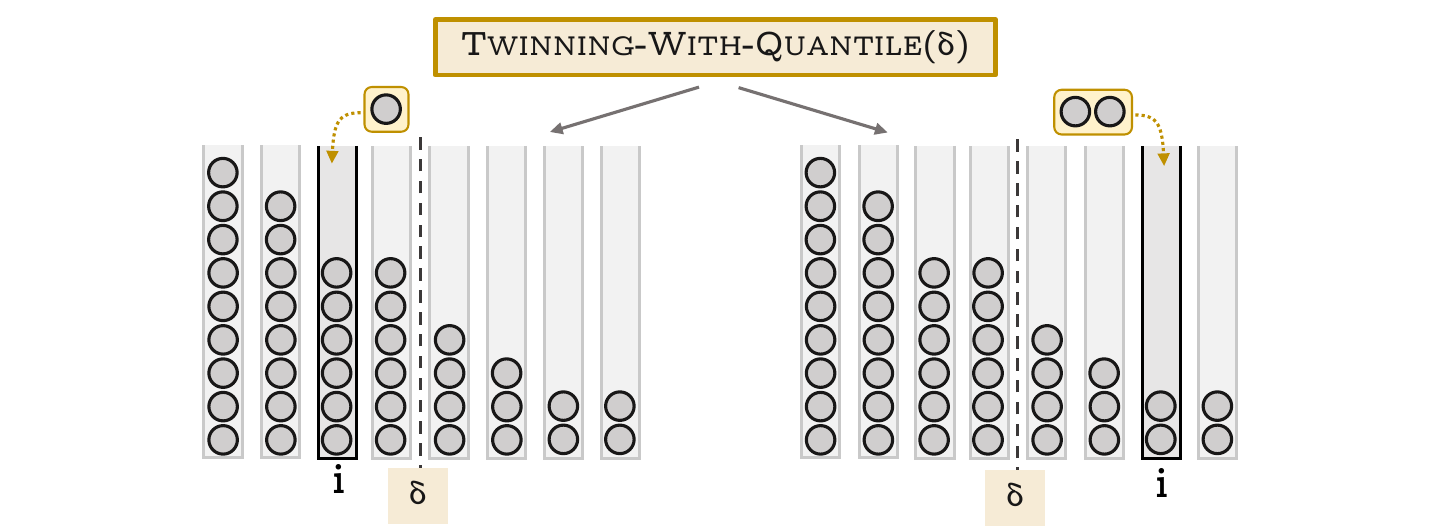}
    \caption{The two cases for the $\TwinningWithQuantile(\delta)$ process: (\textbf{left}) Allocate one ball if the rank of the sampled bin is at most $n \cdot \delta$, otherwise (\textbf{right}) two balls to the sampled bin.}
    \label{fig:twinning_with_quantile}
\end{figure}

Further, we analyze the following variant of the \Quantile process, which allocates one ball to the first sample or two balls to the second sample. This can be seen as a version of the \TwoThinning process which penalizes allocations to the second sample (see \cref{fig:quantile_with_penalty_process}).

\label{sec:quantile_with_penalty_def}

\begin{framed}
	\vspace{-.45em} \noindent
	\underline{$\QuantileWithPenalty(\delta)$~Process:}\\
	\textsf{Iteration:} At step $t \geq 0$, sample two bins $i_1, i_2 \in [n]$ independently and uniformly at random. Then, update:
	\begin{equation*} 
		\begin{cases}
			x_{i_1}^{t+1} = x_{i_1}^{t} + 1 & \mbox{if $\Rank^t(i_1) > n \cdot \delta$}, \\
			x_{i_2}^{t+1} = x_{i_2}^{t} + 2  & \mbox{otherwise}.
		\end{cases}
	\end{equation*}
 \vspace{-0.4cm}
\end{framed}

\begin{figure}[H]
    \centering
    \includegraphics[scale=0.45]{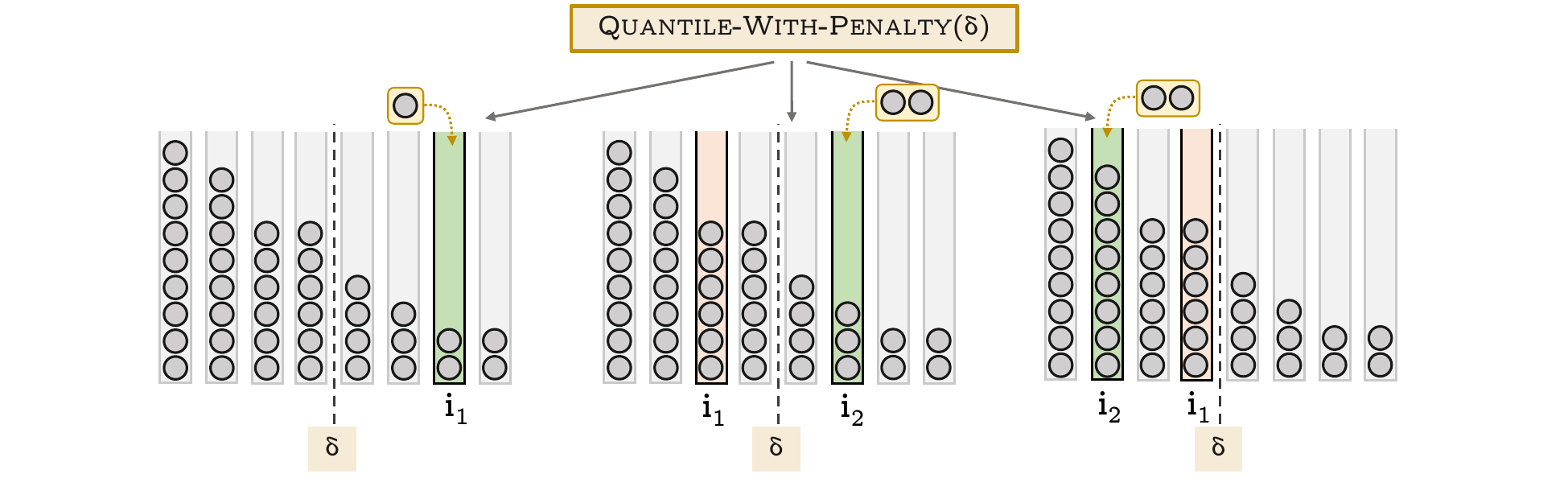}
    \caption{Three cases for the $\QuantileWithPenalty(\delta)$ process: Allocating one ball to the first sample or two balls to the second sample (cf.~\cref{fig:quantile_process}).}
    \label{fig:quantile_with_penalty_process}
\end{figure}

In~\cite{LSS23}, the following relaxation of the \Memory process was studied where the memory (or cache) is \textit{reset} every second step. As remarked on~\cite[page 6]{LSS23}, this can be seen as a sample-efficient version of the \OnePlusBeta-process.

\begin{framed}
	\vspace{-.45em} \noindent
	\underline{$\ResetMemory$~Process:}\\
	\textsf{Iteration:} At step $2t \geq 0$, sample a bin $i_1 \in [n]$ independently and uniformly at random. Then, update:
	\begin{equation*} 
		x_{i_1}^{2t+1} = x_{i_1}^{2t} + w^{2t+1}.
	\end{equation*}
    At step $2t+1\geq 0$, sample a bin $i_2 \in [n]$ uniformly at random and let $i \in \{i_1, i_2\}$ be such that $x_{i}^{2t} = \min\big\{x_{i_1}^{2t}, x_{i_2}^{2t}\big\}$, favoring bins with a higher index. Then, update:\[
        x_{i}^{2t+2} = x_{i}^{2t+1} + w^{2t+2}.
    \]
 \vspace{-0.9cm}
\end{framed}

Finally, in \textit{graphical balanced allocation} \cite{KP06,PTW15,BF21,ANS20}, we are given an undirected graph $G = ([n], E)$ with $n$ vertices corresponding to $n$ bins. For each ball to be allocated, we sample an edge $\{u,v\} \in E$ uniformly at random, and allocate the ball to the lesser loaded bin among $\{u,v\}$. Note that by taking $G$ as a complete graph, we recover the \TwoChoice process. The graphical balanced allocation setting has also been generalized further to \textit{hypergraphs}~\cite{PTW15,GMP20}.

\begin{framed}
\vspace{-.45em} \noindent
\underline{$\Graphical(G)$}\\
\textsf{Parameter:} An undirected, connected \textit{graph} $G = ([n], E)$.\\
\textsf{Iteration:} For each step $t \geq 0$, sample an edge $e=\{i_1,i_2\} \in E$ independently and uniformly at random. Let $i \in \{i_1,i_2 \}$ be such that $x_{i}^{t} = \min\{ x_{i_1}^t,x_{i_2}^t\}$, favoring bins with a higher index. Then update:  
    \begin{equation*}
     x_{i}^{t+1} = x_{i}^{t} + w^{t+1}.
 \end{equation*}\vspace{-1.5em}
\end{framed}

\paragraph{Conditions on Probability Vectors.} The drift theorem in~\cite{PTW15} applies to any process for which there exist $\eps \in (0, 1/4)$ and quantile $\delta \in \{ 1/n, \ldots, 1\}$ such that for every step $t \geq 0$, the probability allocation vector $p^t$ is non-decreasing and
\begin{align} \label{eq:ptw_p_i_conditions}
p_i^t \leq \frac{1 - 4\eps}{n}, \quad\text{for any }i \leq \frac{n}{3},\quad
\text{ and }
\quad
p_i^t \geq \frac{1 + 4\eps}{n}, \quad\text{ for any }i \geq \frac{2n}{3}.
\end{align}

As we will show now, the second part of the condition is not necessary, in the sense that a lower bound on the probability to allocate to a light bin is implied by the first condition and monotonicity of the probability allocation vector. To this end, we define the following conditions on a probability vector:
\begin{itemize}\itemsep0pt
  \item \textbf{Condition \hypertarget{d0}{$\mathcal{D}_0$}}: $(p_i)_{i \in [n]}$ is a \textit{non-decreasing} probability vector in $1 \leq i \leq n$.
  \item \textbf{Condition \hypertarget{d1}{$\mathcal{D}_1$}}: There exist constant $\delta \in (0, 1)$ and (not necessarily constant) $\eps \in (0, 1)$, such that
    \[
    p_{n \cdot \delta} \leq \frac{1 - \epsilon}{n}. 
    \]
\end{itemize}

In our refined version of the drift theorem (\cref{thm:hyperbolic_cosine_expectation}), we define the following generalized conditions \COne and $\CTwo$ for a probability vector $p$, and in \cref{pro:d0_and_d1_imply_c0_and_c1} show that \DZero and \DOne imply condition \COne:
\begin{itemize}\itemsep0pt
  \item \textbf{Condition \hypertarget{c1}{$\mathcal{C}_1$}}: There exist constant\footnote{Here by constant $\delta := \delta(n) \in \{ \frac{1}{n}, \frac{2}{n}, \ldots, 1 \}$, we mean that there exist constants $\delta_1, \delta_2 \in (0, 1)$ such that $\delta_1 \leq \delta \leq \delta_2$ for sufficiently large $n$.} $\delta \in (0, 1)$ and (not necessarily constant) $\eps \in (0, 1)$, such that
    \[
    \sum_{i=1}^{k} p_{i} \leq (1 - \epsilon) \cdot \frac{k}{n} \quad \text{for any $1 \leq k \leq n \cdot \delta$},
    \]
    and similarly
    \[
     \sum_{i=k}^{n} p_i \geq \left(1 + \epsilon \cdot \frac{\delta}{1-\delta} \right) \cdot \frac{n-k+1}{n} \quad \text{for any $n \cdot \delta +1 \leq k \leq n$}.
    \]
 \item \textbf{Condition \hypertarget{c2}{$\mathcal{C}_2$}}: There exists $C \geq 1$, such that $\max_{i \in [n]} p_i \leq \frac{C}{n}$. 
\end{itemize}
\noindent Note that any process taking $d$ uniform samples in each step satisfies condition \CTwo with $C := d$.

\begin{pro}\label{pro:d0_and_d1_imply_c0_and_c1}
Consider any probability vector $p$ that satisfies conditions \DZero and \DOne for some $\delta$ and $\eps$. Then, it also satisfies condition \COne for the same $\delta$ and $\eps$.
\end{pro}

\begin{proof}
Consider any probability vector $p$ that satisfies conditions \DZero and \DOne for some $\delta$ and $\eps$. Then, by condition \DZero we have that $p$ is non-decreasing and by \DOne that $p_{n \cdot \delta} \leq \frac{1-\epsilon}{n}$. Therefore, it follows that $p_i \leq \frac{1-\epsilon}{n}$ for $i \leq n \cdot \delta$ and so for any $1 \leq k \leq n \cdot \delta$, we have that the prefix sums satisfy
\[
 \sum_{i=1}^{k} p_i 
  \leq \frac{1-\epsilon}{n} \cdot k
  \leq (1-\epsilon) \cdot \frac{k}{n}.
\]
This also implies for the the suffix sum at $k = n \cdot \delta + 1$ that
\[
 \sum_{i=n \cdot \delta+1}^{n} p_i \geq 1 - (1 - \epsilon) \cdot \delta.
\]
Since $p$ is non-decreasing, the suffix sums for any $n \cdot \delta +1 \leq k \leq n$ satisfy
\[
 \sum_{i=k}^{n} p_i \geq \left(1 - (1 - \epsilon) \cdot \delta \right) \cdot \frac{n-k+1}{(1-\delta) n} = \left(1 + \epsilon \cdot \frac{\delta}{1-\delta} \right) \cdot \frac{n-k+1}{n}. \qedhere
\]
\end{proof}

Using this proposition, it is easy to verify that the probability allocation vector of the \OnePlusBeta-process, \TwoChoice and \Quantile satisfy the two conditions \COne and \CTwo.
\begin{pro}\label{pro:one_plus_beta}
For any $\beta \in (0,1]$, the probability allocation vector of the \OnePlusBeta-process satisfies condition \COne with $\delta := \frac{1}{4}$ and $\epsilon := \frac{\beta}{2}$ and condition \CTwo with $C := 2$.
\end{pro}

\begin{proof}
Recall that the probability allocation of the  \OnePlusBeta-process is actually
\[
 p_i = (1-\beta) \cdot \frac{1}{n} + \beta \cdot \frac{2i-1}{n^2}, \quad \text{for any }i \in [n].
\]
This shows that $p$ is increasing in $i \in [n]$ (condition \DZero), and thus also $\max_{i \in [n]} p_i \leq \frac{2}{n}$ (condition \CTwo). Further, we establish that condition \DOne holds for $\delta := \frac{1}{4}$ and $\eps := \frac{\beta}{2}$,
\[
 p_{n \cdot \delta} \leq (1-\beta) \cdot \frac{1}{n} + \beta \cdot \frac{1}{2n} = \left(1 - \frac{\beta}{2}\right) \cdot \frac{1}{n}.
\]
Therefore, by \cref{pro:d0_and_d1_imply_c0_and_c1},  condition \COne holds with the same $\delta$ and $\eps$.
\end{proof}

Note that for $\beta=1$, the \OnePlusBeta-process is equivalent to \TwoChoice, so the above statement also applies to \TwoChoice. Finally, since \TwoChoice satisfies \COne, by majorization also \DChoice for any $d > 2$ satisfies \COne with the same $\delta$ and $\epsilon$. Further,  \DChoice satisfies \CTwo with $C := d$ and thus:

\begin{pro} \label{pro:d_choice_satisfies_c0_and_c1}
For any $d \geq 2$, the probability allocation vector of \DChoice satisfies condition \COne with $\delta := \frac{1}{4}$, $\eps := \frac{1}{2}$ and condition \CTwo with $C := d$.
\end{pro}

\begin{pro} \label{pro:quantile_satisfies_c1_and_c2}
For any constant $\delta \in (0,1)$, the probability allocation vector of $\Quantile(\delta)$ satisfies condition \COne with $\delta$ and $\epsilon := 1-\delta$, and condition \CTwo with $C := 2$.
\end{pro}
\begin{proof}
Recall that the probability allocation vector of the $\Quantile(\delta)$ process is given by\[
p_{i} = \begin{cases}
 \frac{\delta}{n} & \mbox{ if $1 \leq i \leq n \cdot \delta$}, \\
 \frac{1+\delta}{n} & \mbox{ otherwise}.
\end{cases}
\]
Therefore, it trivially follows that condition \DZero holds, as well as condition \CTwo with $C:=2$. Further, we have $p_i \leq \frac{\delta}{n}$ for any $i \leq n \cdot \delta$, which means \DOne holds with $\epsilon := 1 - \delta$. By \cref{pro:d0_and_d1_imply_c0_and_c1}, condition \COne also holds.
\end{proof}

\paragraph{Regarding tie-breaking.} In the definitions of the \TwoChoice, \OnePlusBeta-process and \Graphical, we always favored bins with a higher index in case of a tie between the two sampled bins. This definition means that the \TwoChoice and \OnePlusBeta-process have a \textit{time-homogeneous} probability allocation vector. For \Graphical, this means it is possible to prove that the probability allocation vector satisfies condition \COne in every step (\cref{lem:expansion}). 

However, as we show in \cref{rem:tie_breaking},  the upper bounds that we obtain also apply to versions of the processes where we use random tie-breaking instead.  In particular, if $p$ is the original probability allocation vector, then the one with random tie-breaking is $\tilde{p} := \tilde{p}(x^t)$, where
\begin{equation} \label{eq:averaging_pi}
\tilde{p}_i(x^t) := \frac{1}{|\{ j \in [n] : x_j^t = x_i^t \}|} \cdot \sum_{j \in [n] : x_j^t = x_i^t} p_j, \quad \text{for all }i \in [n].
\end{equation}

\section{The Improved Drift Theorem (Theorem~\ref{thm:hyperbolic_cosine_expectation})} \label{sec:refined_drift_thm}

Peres, Talwar and Wieder~\cite{PTW15} analyzed the hyperbolic cosine potential for a large family of processes. In this section, we present a refined analysis for the expectation of the hyperbolic cosine potential which is asymptotically tight and also applies to a wider family of processes including weights and outdated information (see discussion below for details of the refinements). The \textit{hyperbolic cosine potential} $\Gamma := \Gamma(\gamma)$ with smoothing parameter $\gamma > 0$ is defined as
\begin{align}
\Gamma^t := \Phi^t + \Psi^t := \sum_{i = 1}^n e^{\gamma y_i^t} + \sum_{i = 1}^n e^{-\gamma y_i^t}, \label{eq:hyperbolic}
\end{align}
where $\Phi := \Phi(\gamma)$ is the \textit{overload exponential potential} and $\Psi := \Psi(\gamma)$ is the \textit{underload exponential potential}.
We also decompose $\Gamma^t$ by defining
\[
 \Gamma_i^t := \Phi_i^t + \Psi_i^t, \quad \text{where }\Phi_i^t := e^{\gamma y_i^t} \text{ and } \Psi_i^t := e^{-\gamma y_i^t} \quad \text{for any bin $i \in [n]$}.
\]
Further, we use the following shorthands to denote the changes in the potentials over one step $\Delta\Phi_i^{t+1} := \Phi_i^{t+1} - \Phi_i^t$, $\Delta\Psi_i^{t+1} := \Psi_i^{t+1} - \Psi_i^{t}$ and $\Delta\Gamma_i^{t+1} := \Gamma_i^{t+1} - \Gamma_i^{t}$.

The following theorem was proven in~\cite[Section 2]{PTW15}.

\begin{thm}[cf.~{\cite[Section 2]{PTW15}}] \label{thm:ptw_original}
Consider any allocation process with non-decreasing probability allocation vector $p$, satisfying for some $\eps \in (0, 1/4)$ that for every step $t \geq 0$,\[
p_i^t \leq \frac{1 - 4\eps}{n}, \quad\text{for any }i \leq \frac{n}{3},\quad
\text{ and }
\quad
p_i^t \geq \frac{1 + 4\eps}{n}, \quad\text{ for any }i \geq \frac{2n}{3}.
\vspace{-0.1cm}
\]
Further, consider the \Weighted setting with weights from a $\FiniteMgf(\zeta, S)$ distribution with $S \geq 1$. Then, for $\Gamma := \Gamma(\gamma)$ with $\gamma := \min\big\{\frac{\eps}{6S}, \frac{\zeta}{2} \big\}$, there exists $c = \poly(1/\eps)$ such that for any step $t \geq 0$,\[
\Ex{\left.\Delta\Gamma^{t+1} \,\right|\, \mathfrak{F}^t}
 \leq -\Gamma^t \cdot \frac{\gamma\eps}{4n} + c.
\]
\end{thm}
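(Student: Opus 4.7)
The plan is to analyze $\Gamma = \Phi + \Psi$ in three steps: per-bin Taylor expansion, extraction of a quantitative negative drift using the three-third conditions, and absorption of the quadratic error via the scaling of $\gamma$.  For the first step, fix a bin $j$ and condition on $\mathfrak{F}^t$: its normalized load $\tilde{x}_j^{t+1}$ equals $\tilde{x}_j^t + w^{t+1}(1-1/n)$ with probability $q_j := p_{\Rank^t(j)}^t$ (bin $j$ receives the ball) and $\tilde{x}_j^t - w^{t+1}/n$ otherwise.  Applying \cref{lem:bounded_weight_moment} (valid since $\gamma \leq \zeta/2$) with $\ell \in \{1-1/n,\,-1/n\}$ and their sign-flipped versions for $e^{-\gamma \tilde{x}_j^{t+1}}$ yields
\[
\Ex{\left. e^{\pm\gamma \tilde{x}_j^{t+1}} \,\right|\, \mathfrak{F}^t} \;\leq\; e^{\pm\gamma \tilde{x}_j^t} \cdot \Bigl(1 \pm \gamma(q_j - 1/n) + O(S\gamma^2)\cdot(q_j + 1/n^2)\Bigr).
\]
Summing over $j$ (reindexed by sorted rank so $q_j \mapsto p_i^t$) and controlling the quadratic error by Chebyshev's sum inequality $\sum_i p_i^t \Phi_i^t \leq \Phi^t/n$ (which holds because $p^t$ is non-decreasing while $\Phi^t$ is non-increasing in $i$) gives
\[
\Ex{\left. \Delta \Gamma^{t+1} \,\right|\, \mathfrak{F}^t} \;\leq\; \gamma \sum_{i=1}^n (p_i^t - 1/n)(\Phi_i^t - \Psi_i^t) \;+\; O(S\gamma^2 \Gamma^t/n).
\]

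The central step is to show that the linear term is bounded by $-\Theta(\eps/n)\cdot\Gamma^t + O(\eps)$.  I would handle the $\Phi$ and $\Psi$ parts separately: for $\sum_i(p_i^t - 1/n)\Phi_i^t$, the top $n/3$ ranks contribute per-bin drift $\leq -(4\eps/n)\Phi_i^t$ by $p_i^t \leq (1-4\eps)/n$, and by monotonicity of $\Phi_i^t$ one has $\sum_{i \leq n/3}\Phi_i^t \geq \Phi^t/3$, so the top sum is $\leq -(4\eps/3n)\Phi^t$.  The ``rest'' contribution from $i > n/3$ is bounded via an Abel-type summation with reference value $\Phi_{n/3+1}^t \leq 3\Phi^t/n$ together with the identity $\sum_{i > n/3}(p_i^t - 1/n) = -\sum_{i \leq n/3}(p_i^t - 1/n)$; this pairs the excess probability mass accumulated past rank $n/3$ against $\Phi_i^t$-values strictly smaller than $\Phi_{n/3}^t$, so that after cancellation only an $O(\eps)$ additive error remains.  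A symmetric argument, using $p_i^t \geq (1+4\eps)/n$ for $i \geq 2n/3$ together with the non-decreasing sortedness of $\Psi_i^t$, bounds $-\sum_i(p_i^t - 1/n)\Psi_i^t \leq -(4\eps/3n)\Psi^t + O(\eps)$.

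Combining and using the choice $\gamma := \min\{\eps/(6S),\, \zeta/2\}$ (so $S\gamma^2 \leq \gamma\eps/6$, and the quadratic error $O(S\gamma^2\Gamma^t/n)$ is at most a constant fraction of the main drift $-\Theta(\gamma\eps\Gamma^t/n)$), I conclude $\Ex{\Delta\Gamma^{t+1} \mid \mathfrak{F}^t} \leq -\Gamma^t\cdot \gamma\eps/(4n) + c$ for a suitable $c = \poly(1/\eps)$.  The main obstacle is the second step: the naive Chebyshev bound on $\sum_{i > n/3}(p_i^t - 1/n)\Phi_i^t$ is only $O(\Phi^t/n)$, which would cancel the main drift from the top $n/3$.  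Overcoming this requires the Abel decomposition above, which uses that monotonicity of $p^t$ across the whole range forces the positive excess mass in the ``rest'' to be weighted by the smaller $\Phi_i^t$-values past rank $n/3$, a delicate cancellation that is the crux of why the three-third condition on \emph{both} ends (upper bound on the top, lower bound on the bottom) is needed rather than just one.
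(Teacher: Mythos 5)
Your Step 1 (the per-bin Taylor expansion via \cref{lem:bounded_weight_moment}) and Step 3 (absorbing the $O(S\gamma^2\Gamma^t/n)$ error using $\gamma \le \eps/(6S)$) are sound and match the paper's \cref{lem:single_step_change}. The gap is in Step 2, and it is exactly at the point you flag as "the crux." Your claim that $\sum_{i>n/3}(p_i^t-1/n)\Phi_i^t$ leaves "only an $O(\eps)$ additive error" after the Abel/Chebyshev pairing is false. The pairing gives at best $\bigl(\sum_{i>n/3}(p_i^t-1/n)\bigr)\cdot\Phi_{n/3+1}^t = \Theta(\eps)\cdot e^{\gamma y_{n/3}^t}$, which is an $O(\eps)$ \emph{additive} constant only when $y_{n/3}^t = O(1/\gamma)$. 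Concretely, take $n-k$ bins at normalized load $+z$ and $k$ bins at $-(n-k)z/k$ with $k=o(n)$ and $\gamma z = \omega(1)$, and $p_i^t=(1-4\eps)/n$ for $i\le n/3$, $p_i^t=(1+2\eps)/n$ otherwise. Then $\sum_i(p_i^t-1/n)\Phi_i^t \approx -\tfrac{2\eps k}{n}e^{\gamma z}$, whereas your target $-\tfrac{4\eps}{3n}\Phi^t+O(\eps)\approx -\tfrac{4\eps}{3}e^{\gamma z}$ is smaller by a factor $\Theta(n/k)$. So the overload potential alone does \emph{not} satisfy the per-potential drift bound you assert, and the symmetric claim for $\Psi$ cannot rescue a statement that is proved potential-by-potential.

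What is missing is the cross-compensation between $\Phi$ and $\Psi$, which is the heart of the paper's argument (the paper does not reprove \cref{thm:ptw_original} from \cite{PTW15}, but its \cref{thm:main_ptw} proves the strengthened version, and the structure is what your proof would need). There, after reducing to the two-valued worst-case vector $r$, the "bad" overloaded bins beyond the threshold are split into $\mathcal{B}_1$ and $\mathcal{B}_2$: the contribution of $\mathcal{B}_1$ is cancelled by the good overloaded bins (your Abel idea, which is Case A.1), but the contribution of $\mathcal{B}_2$ is handled by a dichotomy on $z_2 := y_{\frac{n}{2}(1+\delta)}$. If $z_2 = O(1/\gamma)$ it is genuinely an $O(\gamma\eps)$ additive term (Case A.2.1); if $z_2$ is large, mass conservation forces the few underloaded bins to be very deep, so $\sum_{i\in\mathcal{G}_-}\Psi_i \ge |\mathcal{G}_-|\,e^{\gamma z_3}$ with $z_3 \ge z_2\cdot\frac{n}{2}(1+\delta)/|\mathcal{G}_-|$, and the \emph{underload} potential's negative drift absorbs the overload potential's positive excess (Case A.2.2, the "holes" argument). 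In the counterexample above this is precisely the term $-\tfrac{2\eps k}{n}e^{\gamma(n-k)z/k}$, which dominates $+\tfrac{4\eps}{3}e^{\gamma z}$ once $\gamma z=\Omega(1)$. Without this coupling between the two halves of $\Gamma$, the drift inequality cannot be established; this is also why the conclusion is naturally stated for $\Gamma$ rather than for $\Phi$ and $\Psi$ separately.
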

By \cref{lem:geometric_arithmetic} when a process satisfies this drift inequality it also satisfies $\Ex{\Gamma^t} \leq \frac{4c}{\gamma\eps} \cdot n$ for every step $t \geq 0$, and by Markov's inequality this implies that with probability at least $1 - n^{-1}$,\[
  \Gap(t) \leq \frac{1}{\gamma} \cdot \log \left( \frac{4c}{\gamma\eps} + \frac{2}{\gamma} \cdot \log n \right).
\]

As we shall describe shortly, our main theorem (\cref{thm:hyperbolic_cosine_expectation}) applies to a variety of processes and settings. However, in order to more precisely highlight the differences to \cref{thm:ptw_original}, we first state a corollary for processes with a probability allocation vector in the \Weighted setting with weights from a \FiniteMgf distribution. The two main differences between \cref{thm:ptw_original} and \cref{cor:like_ptw} are: $(i)$ that $p$ satisfies preconditions \COne and \CTwo, and $(ii)$ the additive term which changes from $\poly(1/\eps)$ to $\Oh(\gamma\eps)$.

\newcommand{\CorLikePTW}{

Consider any allocation process with probability allocation vector $p^t$ satisfying condition \COne for some constant $\delta \in (0, 1)$ and some $\eps \in (0, 1)$, and condition \CTwo for some $C > 1$, for every step $t \geq 0$.

Further, consider the \Weighted setting with weights from a $\FiniteMgf(S)$ distribution with $S \geq 1$. Then, there exists a constant $c > 0$, such that for $\Gamma := \Gamma(\gamma)$ with any $\gamma \in \big(0, \frac{\eps\delta}{16CS}\big]$ and for any step $t \geq 0$,\[
\Ex{\left.\Delta\Gamma^{t} \,\right|\, \mathfrak{F}^t}
 \leq -\Gamma^t \cdot \frac{\gamma\eps\delta}{8n} + c\gamma\eps \quad \text{and} \quad \Ex{\Gamma^t} \leq \frac{8c}{\delta} \cdot n.
\]
}
{\renewcommand{\thecor}{\ref{cor:like_ptw}}
	\begin{cor}[Of \cref{thm:hyperbolic_cosine_expectation} -- Restated, page~\pageref{cor:like_ptw}]
\CorLikePTW
	\end{cor}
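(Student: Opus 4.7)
My plan is to derive this corollary as a specialization of the main drift theorem \cref{thm:hyperbolic_cosine_expectation} to the single-sample $\FiniteMgf(S)$ setting, and then to convert the resulting one-step drift inequality into the expectation bound via \cref{lem:geometric_arithmetic}. In this specialization the admissible range $\gamma \in (0, \eps\delta/(16CS)]$ ought to match exactly the range allowed by the general theorem, and the constant $c$ should emerge directly from the general statement. So the first thing I would do is verify the translation of hypotheses: namely, that $\mathcal{C}_1(\delta,\eps)$, $\mathcal{C}_2(C)$ and $\FiniteMgf(S)$ together imply the preconditions of \cref{thm:hyperbolic_cosine_expectation} with exactly the parameters claimed here.

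The substantive content, which \cref{thm:hyperbolic_cosine_expectation} itself must supply, is a bin-by-bin drift estimate. Conditioned on $\mathfrak{F}^t$, the sampled bin $j$ satisfies $\Pr[j=i] = p_i^t$ and the weight $w = w^{t+1}$ is an independent draw from $\mathcal{W}$; hence for each bin $i$ I would write
\[
\Ex{\left. \Delta\Phi_i^{t+1}\, \right|\, \mathfrak{F}^t} = \Phi_i^t \cdot \left( p_i^t \cdot \Ex{e^{\gamma w(1-1/n)}} + (1-p_i^t) \cdot \Ex{e^{-\gamma w/n}} - 1 \right),
\]
with the analogous identity for $\Psi_i^t$ obtained by replacing $\gamma$ with $-\gamma$. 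Applying \cref{lem:bounded_weight_moment} with $\ell = 1-1/n$ and $\ell = -1/n$ respectively expands each expectation to first order in $\gamma$, yielding $\gamma(p_i^t - 1/n)\Phi_i^t + O(S\gamma^2 \cdot C/n)\Phi_i^t$, where $\mathcal{C}_2$ is used to absorb the factor $p_i^t + (1-p_i^t)/n^2$ into $O(C/n)$, and a symmetric inequality holds for $\Psi_i^t$.

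The crux of the argument will be to bound the first-order sum $\gamma\sum_i (p_i^t - 1/n)\Phi_i^t$ using $\mathcal{C}_1$. By Abel summation this equals $\gamma\sum_{k=1}^{n-1} (P_k - k/n)(\Phi_k^t - \Phi_{k+1}^t)$ with $P_k := \sum_{i \leq k} p_i^t$, and the prefix/suffix parts of $\mathcal{C}_1$ imply $P_k - k/n \leq -\eps k/n$ for $k \leq n\delta$ and $P_k - k/n \leq -\eps\delta(n-k)/((1-\delta)n)$ for $k > n\delta$ (both non-positive). Combined with $\Phi_k^t \geq \Phi_{k+1}^t \geq 1$, I would split cases on whether $\Phi^t$ is dominated by the heavy bins (rank $\leq n\delta$) or is already $O(n)$ (in which case the additive $O(\gamma\eps)$ term suffices on its own), to obtain a net contribution of at most $-(\gamma\eps\delta/(4n))\Phi^t + O(\gamma\eps)$; the analogous bound for $\Psi^t$ follows by reversing the ranking. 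This Abel-summation / case-split bookkeeping is the step I expect to be the main obstacle.

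Finally, with $\gamma \leq \eps\delta/(16CS)$, the second-order remainder $O(S\gamma^2 C/n)\Gamma^t$ is absorbed into half of the first-order drift, so combining the $\Phi$ and $\Psi$ estimates will yield
\[
\Ex{\left.\Delta\Gamma^{t+1}\,\right|\,\mathfrak{F}^t} \leq -\frac{\gamma\eps\delta}{8n}\Gamma^t + c\gamma\eps
\]
for some constant $c$. Iterating via \cref{lem:geometric_arithmetic} with $a = \gamma\eps\delta/(8n)$ and $b = c\gamma\eps$, starting from $\Gamma^0 = 2n$, gives $\Ex{\Gamma^t} \leq 2n + 8cn/\delta$, and absorbing the $2n$ into a slightly larger constant delivers the claimed $\Ex{\Gamma^t} \leq 8cn/\delta$.
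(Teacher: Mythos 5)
Your overall architecture is the paper's: the corollary is proved by verifying the preconditions of \cref{thm:hyperbolic_cosine_expectation} in the single-step weighted setting and then invoking that theorem. The paper packages your second paragraph as \cref{lem:single_step_change}, applied with $K := 2CS$ and $R := 1$, which is exactly where the admissible range $\gamma \le \frac{\eps\delta}{8K} = \frac{\eps\delta}{16CS}$ comes from; your computation of $\Ex{\Delta\Phi_i^{t+1} \mid \mathfrak{F}^t}$ via \cref{lem:bounded_weight_moment} and condition \CTwo matches it. The closing appeal to \cref{lem:geometric_arithmetic} is also how the paper finishes (note the lemma yields $\Ex{\Gamma^t} \le \frac{8c}{\delta} n$ directly once $\Gamma^0 = 2n \le \frac{8c}{\delta} n$; no additive $2n$ needs absorbing).

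The genuine problem is your third paragraph, where you propose to establish the drift by bounding $\gamma \sum_i (p_i^t - 1/n)\Phi_i^t$ by $-\frac{\gamma\eps\delta}{4n}\Phi^t + O(\gamma\eps)$ and ``analogously'' for $\Psi^t$. First, this step is the content of \cref{thm:hyperbolic_cosine_expectation} itself (via \cref{thm:main_ptw}) and does not belong to the corollary's proof. Second, and more seriously, the separate per-potential bound you claim is false, and your two-case split does not cover the failing configurations. Take $y_1 = \dots = y_{n-1} = z$ and $y_n = -(n-1)z$ with $z$ large and, say, $\delta = \tfrac12$. The worst-case vector $r$ gives
\[
\gamma \sum_{i=1}^n \Bigl(r_i - \tfrac{1}{n}\Bigr)\Phi_i \;=\; -\frac{\gamma\eps}{n}\, e^{\gamma z} + \frac{\gamma\eps}{n}\, e^{-(n-1)\gamma z},
\]
whereas $\Phi \ge (n-1)e^{\gamma z}$, so the required right-hand side $-\frac{\gamma\eps\delta}{4n}\Phi + c\gamma\eps \approx -\frac{\gamma\eps}{8}e^{\gamma z}$ is far more negative. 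Here $\Phi$ is neither $O(n)$ nor usefully dominated by the ranks $\le n\delta$: the positive drift $+\Phi_i \cdot \frac{\gamma\tilde{\eps}}{n}$ of the overloaded bins of rank $> n\delta$ nearly cancels the negative drift of the first $n\delta$ bins. What rescues the argument is the single deeply underloaded bin, whose term $-\Psi_n \cdot \frac{\gamma\tilde{\eps}}{n} = -e^{(n-1)\gamma z}\cdot\frac{\gamma\tilde{\eps}}{n}$ is enormous; this cross-compensation is precisely the ``holes'' argument of Case A.2.2 in the proof of \cref{thm:main_ptw}, and it is the reason the key lemma bounds only the sum $\Delta\overline{\Phi} + \Delta\overline{\Psi}$ and never each potential separately. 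If you cite \cref{thm:hyperbolic_cosine_expectation} as a black box, your proof is complete and coincides with the paper's; if you insist on re-deriving the drift, per-potential bounds cannot work and you must argue about $\Gamma$ as a whole.
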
 }
	\addtocounter{thm}{-1}

Now we state the main theorem, where the preconditions are expressed in terms of the expected change of the overload and underload potentials for any allocation process, which could be allocating to more than one bin in each round. Note that in the following theorem the probability vector $p$ need not be the probability allocation vector of the process being considered. For example, when we analyze  the $\TwinningWithQuantile(\delta)$ process in \cref{sec:twinning_with_quantile}, $p$ will be the probability allocation vector of $\Quantile(\delta)$ and not its probability allocation vector (which is the uniform vector). When the rounds consist of multiple steps, then this probability vector expresses some kind of ``average number'' of balls allocated to the $i$-th bin (cf.~\cref{lem:z_i_t_bounded}).

\newcommand{\MainHyperbolicCosineExpectation}{
Consider any allocation process and a probability vector $p^t$ satisfying condition \COne for some constant $\delta \in (0, 1)$ and some $\eps \in (0, 1)$ at every round $t \geq 0$. Further assume that there exist $K > 0$, $\gamma \in \big(0, \min\big\{1, \frac{\eps\delta}{8K}\big\} \big]$ and $R > 0$, such that for any round $t \geq 0$, the process satisfies for potentials $\Phi := \Phi(\gamma)$ and $\Psi := \Psi(\gamma)$ that for bins sorted in non-increasing order of their loads,
\[
\sum_{i = 1}^n \Ex{\left. \Delta\Phi_i^{t+1} \,\right|\, \mathfrak{F}^t} \leq \sum_{i = 1}^n \Phi_i^t \cdot \left(\left(p_i^t - \frac{1}{n}\right) \cdot R \cdot \gamma + K \cdot R \cdot \frac{\gamma^2}{n}\right),
\]
and,
\[
\sum_{i = 1}^n \Ex{\left.\Delta\Psi_i^{t+1} \,\right|\, \mathfrak{F}^t} \leq  \sum_{i = 1}^n \Psi_i^t \cdot \left(\left(\frac{1}{n} - p_i^t\right) \cdot R \cdot \gamma + K \cdot R \cdot \frac{\gamma^2}{n}\right).
\]
Then, there exists a constant $c := c(\delta) > 0$, such that for $\Gamma := \Gamma(\gamma)$ and any round $t \geq 0$,
\[
\Ex{\left. \Delta\Gamma^{t+1} \,\right|\, \mathfrak{F}^t} \leq - \Gamma^t \cdot R \cdot \frac{\gamma\eps\delta}{8n} + R \cdot c\gamma\eps,
\]
and
\[
\Ex{\Gamma^t} \leq \frac{8c}{\delta} \cdot n.
\]}

\begin{thm} \label{thm:hyperbolic_cosine_expectation}
\MainHyperbolicCosineExpectation
\end{thm}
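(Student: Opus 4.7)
The plan is to combine the two hypothesized bounds on $\sum_i \Ex{\Delta\Phi_i^{t+1} \mid \mathfrak{F}^t}$ and $\sum_i \Ex{\Delta\Psi_i^{t+1} \mid \mathfrak{F}^t}$ into a single drift expression for $\Gamma^t$, use condition \COne to extract a multiplicative decrease from the resulting ``linear-in-$\gamma$'' term, and then iterate the resulting recurrence to obtain the expectation bound.

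Summing the two hypotheses yields
\[
\Ex{\Delta\Gamma^{t+1} \mid \mathfrak{F}^t} \leq R\gamma \sum_{i=1}^n \left(p_i^t - \tfrac{1}{n}\right)(\Phi_i^t - \Psi_i^t) + \frac{KR\gamma^2}{n}\Gamma^t,
\]
and the assumption $\gamma \leq \eps\delta/(8K)$ bounds the quadratic-in-$\gamma$ term by $R\gamma\eps\delta\Gamma^t/(8n)$, which consumes at most half of the intended multiplicative drift. It therefore suffices to show
\[
\sum_{i=1}^n \left(p_i^t - \tfrac{1}{n}\right)(\Phi_i^t - \Psi_i^t) \leq -\frac{\eps\delta}{4n}\Gamma^t + c\eps
\]
for some constant $c = c(\delta) > 0$. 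The two structural inputs are: (i) the sequence $a_i := \Phi_i^t - \Psi_i^t = 2\sinh(\gamma y_i^t)$ is non-increasing in $i$ (bins are sorted non-increasingly and $\sinh$ is monotone); and (ii) by \COne the partial sums $D_k := \sum_{j \leq k}(p_j^t - 1/n)$ satisfy $D_k \leq 0$ and $D_n = 0$, with the quantitative bound $|D_k| \geq (\eps/n) \cdot \min\!\bigl(k,\, \delta(n-k)/(1-\delta)\bigr)$. Summation by parts rewrites the target sum as $\sum_{k=1}^{n-1} D_k(a_k - a_{k+1})$, each term of which is non-positive. A second summation by parts, after substituting the quantitative lower bound on $|D_k|$, converts this expression into a weighted sum of the form $(\eps/n)\sum_{k=1}^n w_k a_k$ with $w_k = +1$ for $k \leq n\delta$ and $w_k = -\delta/(1-\delta)$ for $k > n\delta$. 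Since the sign of $w_k$ flips at the quantile threshold $n\delta$ while the sign of $a_k$ flips at the crossover index $i^*$ where $y_{i^*}^t$ changes sign, the two sign patterns are essentially aligned, making the weighted sum comparable to $\min(1, \delta/(1-\delta)) \cdot \sum_i |a_i|$; the identity $|\sinh x| \geq \cosh x - 1$, valid for all $x \in \mathbb{R}$, then gives $\sum_i |a_i| \geq \Gamma^t - 2n$, with the additive $2n$ absorbed into $c\eps$.

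The second claim then follows by taking unconditional expectations in the drift inequality and iterating the linear recurrence $\Ex{\Gamma^{t+1}} \leq (1 - R\gamma\eps\delta/(8n))\Ex{\Gamma^t} + Rc\gamma\eps$, whose fixed point equals $8cn/\delta$; since the initial value $\Gamma^0 = 2n$ lies below this fixed point (provided $c$ is chosen at least $\delta/4$), a straightforward induction on $t$ gives $\Ex{\Gamma^t} \leq 8cn/\delta$ for every $t \geq 0$. The main obstacle is the case where the crossover index $i^*$ is not aligned with the quantile threshold $n\delta$: in the gap between $\min(i^*, n\delta)$ and $\max(i^*, n\delta)$, the weighted sum contains terms $w_k a_k$ of the ``wrong'' sign. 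I would handle these either by showing they have small magnitude (since $|a_k|$ is small near its zero crossing), or by a short case analysis exploiting that at least one of the overloaded or underloaded sides of $\Gamma^t$ must dominate, so the contribution of the mis-aligned range is absorbed into the additive $c\eps$ term at the cost of enlarging the constant $c = c(\delta)$.
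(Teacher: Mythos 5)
Your outer architecture matches the paper's: summing the two hypotheses to get $\Ex{\Delta\Gamma^{t+1}\mid\mathfrak{F}^t}\leq R\gamma\sum_i(p_i^t-\frac{1}{n})(\Phi_i^t-\Psi_i^t)+\frac{KR\gamma^2}{n}\Gamma^t$, absorbing the quadratic term via $\gamma\leq\frac{\eps\delta}{8K}$ into half the drift, and iterating the recurrence (the paper's \cref{lem:geometric_arithmetic}; your check that $\Gamma^0=2n$ lies below the fixed point $8cn/\delta$ is the right precondition). The deterministic inequality you reduce to is exactly the paper's Key Lemma (\cref{thm:main_ptw}), and your double Abel summation is a clean reformulation of the paper's first step, which majorizes $p$ by the two-valued worst-case vector $r$ with $r_i-\frac1n=-\frac{\eps}{n}$ for $i\leq\delta n$ and $+\frac{\tilde\eps}{n}$ otherwise. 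Your treatment of the aligned case (crossover index $i^*=n\delta$) via $|\sinh x|\geq\cosh x-1$ is also correct.

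The genuine gap is in the misaligned case, which is the actual core of the paper's proof (its Cases A.1, A.2.1, A.2.2 and the symmetric Case B). Your first proposed fix --- that the wrong-sign terms ``have small magnitude since $|a_k|$ is small near its zero crossing'' --- is false: take $\delta=\frac14$ and a load vector in which the $\frac{n}{2}$ heaviest bins all have normalized load $L$ with $\gamma L$ arbitrarily large. Then every misaligned bin $k\in(\frac{n}{4},\frac{n}{2}]$ has $a_k=2\sinh(\gamma L)$, as large as any term in the sum, and their total wrong-sign contribution is $\Theta(\eps)\cdot n\sinh(\gamma L)/n$, i.e.\ of the same order as the entire good contribution. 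Your second proposed fix (``one side must dominate'') is the right idea, but it is not a short absorption into $c\eps$: when the number of bad overloaded bins exceeds $\frac{n}{2}(1-\delta)$ and their load $z_2$ is large, the rescue requires a quantitative counting argument --- the underloaded bins collectively carry at least $z_2\cdot\frac{n}{2}(1+\delta)$ holes spread over at most $\frac{n}{2}(1-\delta)$ bins, and minimizing $\sum_{i\in\mathcal{G}_-}\Psi_i$ under that constraint (the function $x\mapsto x e^{M/x}$ is decreasing on $(0,M]$) shows the underload decrease exceeds the bad overload increase precisely when $z_2>\frac{1}{\gamma}\cdot\frac{1-\delta}{2\delta}\log(8/3)$; the complementary small-$z_2$ regime is where the additive $c\eps$ absorption happens. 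Without carrying out this threshold-and-counting argument (or an equivalent one), the central inequality is unproven, so as it stands the proposal establishes the reduction but not the theorem.
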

\noindent This theorem is a refinement of \cref{thm:ptw_original} in the following ways:
\begin{itemize}
  \item When rounds consist of a single step and the allocation vector $q$ coincides with the probability vector $p$, we relax\footnote{The addition of condition \CTwo is not exactly a relaxation of \DZero and \DOne. 
  However, in the unit weight case, as we show in \cref{sec:condition_c2}, due to a majorisation argument (\cref{thm:majorization}), the gap bound in \cref{cor:like_ptw} also applies to processes satisfying conditions \DZero and \DOne (but not necessarily \CTwo).}
   the preconditions on $p$, requiring that it satisfies conditions \COne and \CTwo (instead of \DZero and \DOne).

  This allows us to apply the theorem for graphical balanced allocations with weights on $d$-regular graphs as $q$ satisfies \COne and \CTwo (as we shall see in \cref{lem:expansion}). Note that the majorization argument in~\cite[Section 3]{PTW15} only applies for unit weights.
  \item 
  It decouples upper bounding the expected change for the overload and underload potentials and their combination in proving an overall drop for the $\Gamma$ potential.
  
  This split allows us to apply the theorem $(i)$ for processes that allocate a different number of balls depending on the sampled bin, such that such as \TwinningWithQuantile (\cref{sec:twinning_with_quantile}), \QuantileWithPenalty (\cref{sec:quantile_with_penalty}) and also \ResetMemory (\cref{sec:reset_memory}) or $(ii)$ for processes that allocate balls to more than one bin in one round, such as the \Batched setting (\cref{sec:b_batched_setting}).
  \item We show that $\Gamma := \Gamma(\gamma)$ satisfies the following drift inequality for some \textit{constants} $c_1, c_2 > 0$,
  \[
    \Ex{\left.\Delta\Gamma^{t+1} \,\right|\, \mathfrak{F}^t}
    \leq -\Gamma^t \cdot \frac{c_1\gamma\eps}{n} + c_2\gamma\eps.
  \]
  \label{sec:bounds_on_bins_with_load_at_least}
  This allows us to deduce that for any round $t \geq 0$, it holds that $\Ex{\Gamma^t} \leq \frac{c_2}{c_1} \cdot n$ (\cref{lem:geometric_arithmetic}) and this directly implies the tight $\Oh\big(\frac{\log n}{\beta}\big)$ bound on the \OnePlusBeta-process for all $\beta \leq 1 - \xi$, for any constant $\xi > 0$ (\cref{thm:one_plus_beta_upper_bound}). Furthermore, the $\Oh(n)$ bound on the expectation of $\Gamma$ gives a bound on the number of bins with normalized load above a certain threshold, effectively characterizing the entire load vector. For several processes, these bounds have been critical in proving tighter bounds for the \Batched setting (\cite[Section 5]{LS22Batched} and \cite[Section 4]{LS23Batched}), and for the \Memory process~\cite{LSS23}.
\end{itemize}

The key lemma that we use to prove \cref{thm:hyperbolic_cosine_expectation} is a drift inequality that is agnostic of balanced allocation processes and is essentially an inequality involving only $\Phi, \Psi, \Gamma$ over an arbitrary (load) vector $x$ (with $y = x - \overline{x}$ being its normalized version) and a probability vector $p$ satisfying condition \COne.

\newcommand{\GammaExpectationKeyLemma}{
Consider any probability vector $p$ satisfying condition \COne for constant $\delta \in (0, 1)$ and (not necessarily constant) $\eps \in (0, 1)$, and any sorted load vector $x \in \R^n$ with $\Phi :=\Phi(\gamma)$, $\Psi :=\Psi(\gamma)$ and $\Gamma :=\Gamma(\gamma)$ for any smoothing parameter $\gamma \in (0, 1]$. Further define,
\[
\Delta\overline{\Phi} := \sum_{i=1}^n \Delta\overline{\Phi}_i := \sum_{i = 1}^n\Phi_i \cdot \left(p_i - \frac{1}{n}\right) \cdot \gamma,
\quad 
\text{ and }
\quad
\Delta\overline{\Psi} := \sum_{i=1}^n \Delta\overline{\Psi}_i := \sum_{i = 1}^n \Psi_i \cdot \left(\frac{1}{n} - p_i\right) \cdot \gamma.
\]
Then, there exists a constant $c := c(\delta) > 0$, such that
\[
\Delta\overline{\Gamma} := \sum_{i = 1}^n \Delta\overline{\Gamma}_i := \sum_{i = 1}^n \Delta\overline{\Phi}_i + \Delta\overline{\Psi}_i \leq -\Gamma \cdot \frac{\gamma\eps\delta}{4n} + c\gamma\eps.
\]}

\begin{lem}[Key Lemma] \label{thm:main_ptw}
\GammaExpectationKeyLemma
\end{lem}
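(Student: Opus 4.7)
The plan is to prove the lemma via a double Abel summation combined with Condition~\COne, and then reduce the resulting bound to the desired drift inequality by a structural lower bound on a certain "monotonicity defect" of $\Phi$ and $\Psi$.

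First, I introduce the prefix sums $A_k := \sum_{i=1}^{k}(p_i - 1/n)$ with $A_0 = A_n = 0$. From the two halves of Condition~\COne I derive two quantitative bounds:
\[
A_k \leq -\frac{\eps\, k}{n} \quad \text{for } 1 \leq k \leq n\delta, \qquad A_k \leq -\frac{\eps\delta (n-k)}{(1-\delta)\, n} \quad \text{for } n\delta \leq k \leq n-1,
\]
(the second follows by applying the suffix bound at index $k+1$ and using $A_n=0$); in particular $A_k \leq 0$ for every $k$.

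Next, I apply Abel summation separately to the overload and underload linearisations. Since $\Phi$ is non-increasing and $\Psi$ is non-decreasing in $i$ (as $y$ is sorted non-increasingly), both $\Phi_i - \Phi_{i+1} \geq 0$ and $\Psi_{i+1}-\Psi_i \geq 0$, and using $A_n=0$:
\[
\Delta\overline{\Phi}/\gamma \;=\; \sum_{i=1}^{n-1} A_i\,(\Phi_i - \Phi_{i+1}), \qquad \Delta\overline{\Psi}/\gamma \;=\; \sum_{i=1}^{n-1} A_i\,(\Psi_{i+1}-\Psi_i).
\]
Every summand is $\leq 0$. Splitting the range at $n\delta$, plugging in the two-piece bound on $A_i$, and carrying out a second Abel summation on each piece should telescope (I expect the cross-terms to cancel cleanly) to yield
\[
\Delta\overline{\Phi}/\gamma \;\leq\; -\tfrac{\eps}{n}\,\Phi_L + \tfrac{\eps\delta}{(1-\delta)n}\,\Phi_R, \qquad \Delta\overline{\Psi}/\gamma \;\leq\; \tfrac{\eps}{n}\,\Psi_L - \tfrac{\eps\delta}{(1-\delta)n}\,\Psi_R,
\]
where $\Phi_L := \sum_{i \leq n\delta}\Phi_i$, $\Phi_R := \Phi - \Phi_L$, and $\Psi_L, \Psi_R$ analogously. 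Adding the two and regrouping the four terms as $(1-\delta)$ vs.\ $\delta$ coefficients, this collapses to
\[
\Delta\overline{\Gamma}/\gamma \;\leq\; -\tfrac{\eps}{(1-\delta)n}\,\bigl[(\Phi_L - \delta\Phi) + (\delta\Psi - \Psi_L)\bigr] \;=\; -\tfrac{\eps}{(1-\delta)n}\,(I_\Phi + I_\Psi),
\]
where $I_\Phi := \Phi_L - \delta \Phi \geq 0$ and $I_\Psi := \delta\Psi - \Psi_L \geq 0$, both non-negativities following from the monotonicities (the average of a non-increasing sequence over its top $\delta$ fraction is at least its overall average, and dually for $\Psi$).

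The main obstacle, and the only genuinely non-routine step, is to lower-bound $I_\Phi + I_\Psi$ by a constant multiple of $\Gamma$ up to an $O(n)$ slack. My target is an inequality of the form
\[
I_\Phi + I_\Psi \;\geq\; c_1(\delta)\,\Gamma - c_2(\delta)\,n,
\]
which, combined with the previous display, immediately gives $\Delta\overline{\Gamma} \leq -\Gamma\cdot \tfrac{\gamma\eps\delta}{4n} + c\gamma\eps$ after choosing $c_1(\delta) \geq \delta(1-\delta)/4$ (the extra factor $(1-\delta)$ will be absorbed into $c(\delta)$). To prove the structural inequality, I would exploit the identity $\Phi_i\Psi_i = 1$: then $\Phi_i+\Psi_i \leq \max(\Phi_i,\Psi_i)+1$, and summing yields $\Gamma - n \leq \sum_{i \in H}\Phi_i + \sum_{i \in L}\Psi_i$, where $H := \{i : y_i \geq 0\}$ and $L := [n]\setminus H$ (so that $H = \{1,\ldots,h\}$ for some $h$). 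I will then split by whether $h \leq n\delta$ or $h > n\delta$. In the first case, $[1,n\delta] \supseteq H$, so $\Phi_L \geq \sum_{i \in H}\Phi_i$ and $\Psi_R \geq \sum_{i \in L}\Psi_i$, while the "wrong-sign" parts $\sum_{i \in L}\Phi_i$ and $\sum_{i \in H}\Psi_i$ are each at most $n$ (as $\Phi_i \leq 1$ on $L$ and $\Psi_i \leq 1$ on $H$); plugging into the definitions of $I_\Phi, I_\Psi$ gives $I_\Phi + I_\Psi \geq \min(\delta,1-\delta)(\Gamma - n) - O(\delta n)$. The case $h > n\delta$ is symmetric, bounding $I_\Psi$ via $\Psi_L \leq n\delta$ (since all of bins $1,\ldots,n\delta$ are heavy and thus have $\Psi_i \leq 1$). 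Taking the worse of the two cases delivers the claimed structural inequality with $c_1(\delta) = \Theta(\min(\delta,1-\delta))$ and $c_2(\delta) = O(1)$, and multiplying through by $\gamma\eps/((1-\delta)n)$ completes the proof.
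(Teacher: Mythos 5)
Your reduction is sound up to and including the display $\Delta\overline{\Gamma}/\gamma \leq -\frac{\eps}{(1-\delta)n}\,(I_\Phi+I_\Psi)$: the Abel summation together with the two-piece bound on $A_k$ is exactly the paper's majorization by the worst-case step vector ($r_i=\frac{1-\eps}{n}$ for $i\leq n\delta$ and $r_i=\frac{1+\tilde{\eps}}{n}$ otherwise, via \cref{lem:quasilem2}) carried out by hand, and your target inequality $I_\Phi+I_\Psi\geq c_1(\delta)\,\Gamma-c_2(\delta)\,n$ is indeed equivalent to the lemma for this worst-case vector. The gap is in your proof of that structural inequality, which is where all the difficulty of the lemma actually lives. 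In your first case ($h\leq n\delta$) the claim $\Psi_R\geq\sum_{i\in L}\Psi_i$ is reversed (here $R\subseteq L$), and, more seriously, $\Psi_L=\sum_{i\leq n\delta}\Psi_i$ is \emph{not} at most $n$: it contains the underloaded bins at positions $h<i\leq n\delta$, whose $\Psi_i=e^{-\gamma y_i}$ can be exponentially large, so the term $-(1-\delta)\Psi_L$ inside $I_\Psi=\delta\Psi_R-(1-\delta)\Psi_L$ can consume essentially all of $\delta\Psi_R$; monotonicity only gives $\sum_{h<i\leq n\delta}\Psi_i\leq\frac{\delta}{1-\delta}\Psi_R$, which leaves $I_\Psi\geq\frac{h}{n}\Psi_R-O(n)$ and loses the $\delta\Psi(L)$ term whenever $h=o(n)$. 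The symmetric defect in your second case ($h>n\delta$) is worse: $\Phi_R$ contains the overloaded bins at positions $n\delta<i\leq h$, and if almost all bins carry a common overload $z$ (say $h=n(1-\eta)$) one computes $I_\Phi\approx\eta\delta\Phi$, a vanishing fraction of $\Phi$, so $\Psi_L\leq n\delta$ alone does not rescue the bound.

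Closing these cases requires precisely the dichotomy of the paper's Cases A.2.1/A.2.2: either the load at position $\approx n\delta$ (resp.\ $\tfrac{n}{2}(1+\delta)$) is $O(1/\gamma)$, so the offending portion of $\Phi_R$ (resp.\ of $\Psi_L$) is $O(n)$ and can be absorbed into $c_2 n$; or it is large, and then, since the total underload equals the total overload and is carried by the few bins in $L$, convexity of $v\mapsto e^{-\gamma v}$ forces $\Psi\gg\Phi$, after which $I_\Psi\geq\delta\Psi-n\delta\geq\tfrac{\delta}{2}\Gamma-O(n)$ suffices on its own. This ``holes'' argument is the one genuinely non-routine idea in the lemma and it is absent from your sketch; without it the structural inequality is asserted rather than proved. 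A minor further point: even granting $c_1(\delta)=\Theta(\min(\delta,1-\delta))$ you must verify $c_1\geq\delta(1-\delta)/4$ to recover the stated drift constant $\frac{\gamma\eps\delta}{4n}$ (for instance $c_1=\tfrac{1}{10}\min(\delta,1-\delta)$ fails at $\delta=\tfrac12$), though this only affects the absolute constant and not the substance.
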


\section{Proof of Main Theorem (Theorem~\ref{thm:hyperbolic_cosine_expectation})} \label{sec:drift_thm_proof}

In this section, we prove the main theorem (\cref{thm:hyperbolic_cosine_expectation}). We start with an outline for the proof of the key lemma (\cref{thm:main_ptw}), which we later prove. Finally, we apply the key lemma to derive the main theorem.

Before presenting the proof, we outline the key ideas in the proof: 

\begin{enumerate}
    \item It suffices to analyze $\Delta\overline{\Gamma} = \Delta\overline{\Phi} + \Delta\overline{\Psi}$ for the simplified probability vector,
\begin{align}  \label{eq:gamma_worst_case_vector}
r_i := \begin{cases}
 \frac{1 - \eps}{n} & \text{if } i \leq \delta n, \\
 \frac{1 + \tilde{\eps}}{n} & \text{otherwise},
\end{cases}
\end{align}
where $\tilde{\eps} := \eps \cdot \frac{\delta}{1 - \delta}$, as $r$ maximizes the terms $\Delta\overline{\Phi}$ and $\Delta\overline{\Psi}$, over all probability vectors satisfying condition \COne for a given $\delta$ and $\eps$.

\item For any bin $i \in [n]$, there is one dominant term in $\Gamma_i = \Phi_i + \Psi_i = e^{\gamma y_i} + e^{-\gamma y_i}$: for overloaded bins ($y_i \geq 0$) it is $\Phi_i = e^{\gamma y_i}$ (and $\Psi_i = e^{-\gamma y_i} \leq 1$) and for underloaded bins ($y_i < 0$) it is $\Psi_i$ (and $\Phi_i \leq 1$). In \cref{clm:change_of_dominant}, we show that the contribution of the non-dominant term in $\Delta\overline{\Gamma}$ is subsumed by the additive term, i.e., $c\gamma\eps$. 

\item Any overloaded bin $i \in [n]$ with $i \leq \delta n$, satisfies $r_i = \frac{1-\eps}{n}$ and so $\Delta\overline{\Phi}_i = -\Phi_i \cdot \frac{\gamma\eps}{n}$. We call these the set $\mathcal{G}_+$ of \textit{good overloaded bins}, as their dominant term decreases in expectation. The rest of the overloaded bins are the \textit{bad overloaded bins} $\mathcal{B}_+$, as these satisfy $\Delta\overline{\Phi}_i = + \Phi_i \cdot \frac{\gamma\tilde{\eps}}{n}$. 

Similarly, \textit{good underloaded bins} $\mathcal{G}_-$ with $i > \delta n$, satisfy $\Delta\overline{\Psi}_i = -\Psi_i \cdot \frac{\gamma\tilde{\eps}}{n}$ and \textit{bad underloaded bins} $\mathcal{B}_-$ satisfy $\Delta\overline{\Psi}_i = +\Psi_i \cdot \frac{\gamma\eps}{n}$.

\definecolor{DecTerm}{RGB}{84,130,53}
\definecolor{IncTerm}{RGB}{222,0,0}
\begin{table}[H]
\renewcommand{\arraystretch}{1.5}
    \centering
    \scalebox{0.8}{
    \begin{tabular}{|c|c|c|c|c|}
    \hline
      Set & Load & Index & $r_i$ & Contribution $\Delta\overline{\Gamma}_i$ \\ \hline
    $\mathcal{G}_+$ & $y_i \geq 0$ & $i \leq \delta n$ & $\frac{1-\eps}{n}$ & $\textcolor{DecTerm}{-\Phi_i \cdot \frac{\gamma\eps}{n}} + \Psi_i \cdot \frac{\gamma\eps}{n}$ \\
    $\mathcal{B}_+$ & $y_i \geq 0$ & $i > \delta n$ & $\frac{1+\tilde{\eps}}{n}$ & $\textcolor{IncTerm}{+\Phi_i \cdot \frac{\gamma\tilde{\eps}}{n}} - \Psi_i \cdot \frac{\gamma\tilde{\eps}}{n}$ \\
    $\mathcal{G}_-$ & $y_i < 0$ & $i > \delta n$ & $\frac{1+\tilde{\eps}}{n}$ & $+\Phi_i \cdot \frac{\gamma\tilde{\eps}}{n} \textcolor{DecTerm}{- \Psi_i \cdot \frac{\gamma\tilde{\eps}}{n}}$ \\
    $\mathcal{B}_-$ & $y_i < 0$ & $i \leq \delta n$ & $\frac{1-\eps}{n}$ & $-\Phi_i \cdot \frac{\gamma\eps}{n} \textcolor{IncTerm}{+ \Psi_i \cdot \frac{\gamma\eps}{n}}$ \\ \hline
    \end{tabular}}
    \caption{The definition of the four sets of bins and the contribution term of each bin to $\Delta\overline{\Gamma}$. The \textit{dominant term} is colored. The sign of the dominant term determines if a bin is good (\textcolor{DecTerm}{negative sign/decrease}) or bad (\textcolor{IncTerm}{positive sign/increase}).}
    \label{tab:four_sets_of_bins}
\end{table}

\item We can either have $\mathcal{B}_+ \neq \emptyset$ or $\mathcal{B}_- \neq \emptyset$ (see \cref{fig:general_case}). 

The handling of one case is symmetric to the other due to the symmetric nature of $\Delta \overline{\Phi}$ and $\Delta \overline{\Psi}$ (with $\delta$ being replaced by $1-\delta$). So, from here on we only consider the case with $\mathcal{B}_+ \neq \emptyset$ (and $\mathcal{B}_- = \emptyset$).

\item \textbf{Case A.1:} When the number of bad overloaded bins is small (i.e., $1 \leq |\mathcal{B}_+| \leq \frac{n}{2} \cdot (1 - \delta)$), the positive contribution of the bins in $\mathcal{B}_+$ is counteracted by the negative contribution of the bins in $\mathcal{G}_+$ (\cref{fig:case_a}). We prove this by making the worst-case assumption that all bad bins have load  $y_{\delta n}$. All underloaded bins are good, so on aggregate we get a decrease.

\item \textbf{Case A.2:} Consider the case when the number of bad overloaded bins is large $|\mathcal{B}_+| > \frac{n}{2} \cdot (1 - \delta)$. The positive contribution of the first $\frac{n}{2} \cdot (1 - \delta)$ of the bins $\mathcal{B}_+$, call them $\mathcal{B}_1$, is counteracted by the negative contribution of the bins in $\mathcal{G}_+$ as in Case A.1. The positive contribution of the remaining bad bins $\mathcal{B}_2$ is counteracted by a fraction of the negative contribution of the bins in $\mathcal{G}_-$. This is because the number of ``holes'' (empty ball slots in the underloaded bins) in the bins of $\mathcal{G}_-$ are significantly more than the number of balls in $\mathcal{B}_2$. Hence, again on aggregate we get a decrease (\cref{fig:case_b_2}).

\end{enumerate}

\begin{figure}
    \centering
    \includegraphics[scale=0.55]{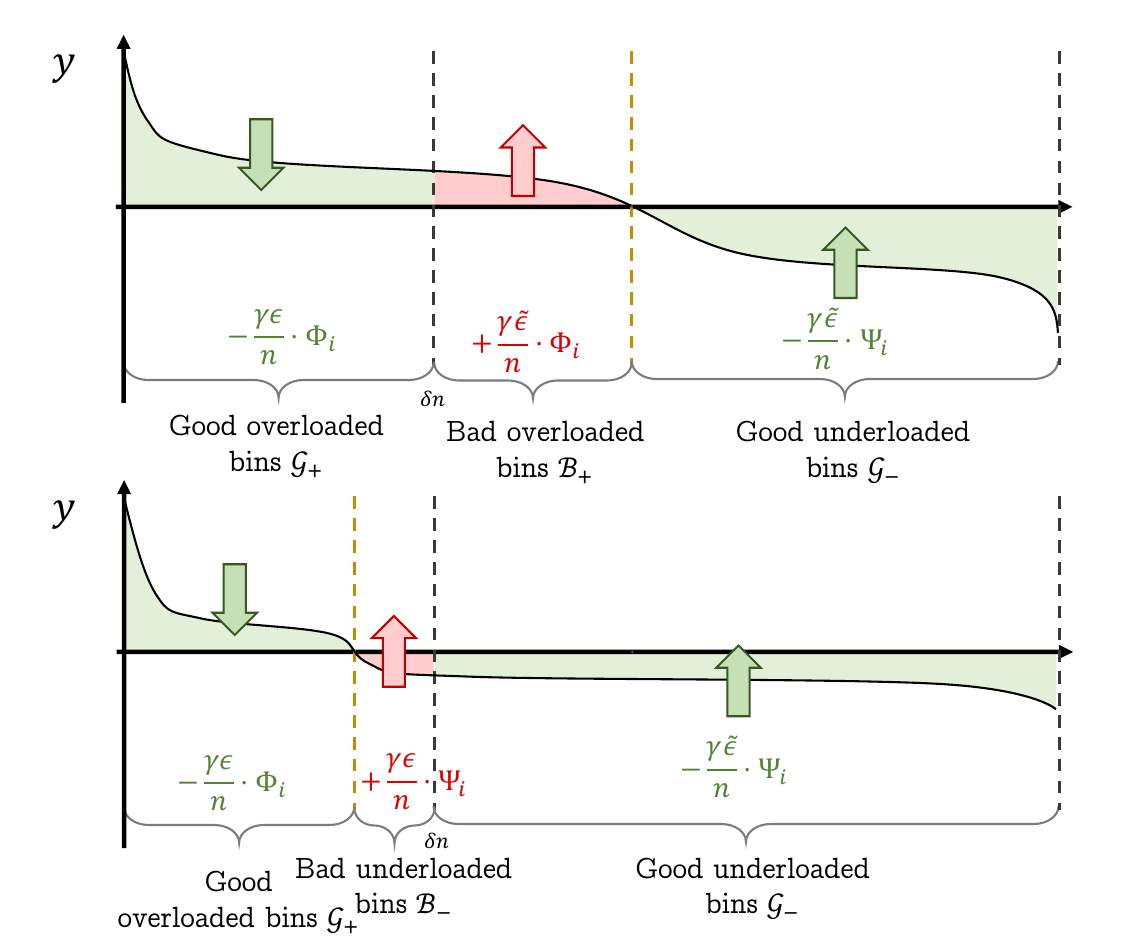}
    \caption{The two cases of bad bins ($\mathcal{B}_+ \neq \emptyset$ and $\mathcal{B}_- \neq \emptyset$ respectively) in a load vector and their dominant terms in $\Delta\overline{\Gamma}$ for each of the set of bins. The dominant terms that are decreasing are shown in green and the dominant terms that are increasing are shown in red.}
    \label{fig:general_case}
\end{figure}

We proceed with a simple claim for bounding the contributions of the non-dominant terms:

\begin{clm} \label{clm:change_of_dominant}
Consider the probability vector $r$ as defined in \cref{eq:gamma_worst_case_vector}. For any bin $i \in [n]$ with $y_i \geq 0$, we have that\[
\Delta\overline{\Psi}_i \leq - \Psi_i \cdot \frac{\gamma\eps\delta}{4n} + \frac{2\gamma\eps}{n},
\]
and for any bin $i \in [n]$ with $y_i < 0$, we have that\[
\Delta\overline{\Phi}_i \leq -\Phi_i \cdot \frac{\gamma\eps\delta}{4n} + \frac{2\gamma\tilde{\eps}}{n}.
\]
\end{clm}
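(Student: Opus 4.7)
} The plan is to expand the definition of $\Delta\overline{\Psi}_i$ (respectively $\Delta\overline{\Phi}_i$) for the explicit probability vector $r$ and do a direct case analysis on whether $i \leq \delta n$ or $i > \delta n$. The crucial observation is that the hypothesis $y_i \geq 0$ forces $\Psi_i = e^{-\gamma y_i} \leq 1$, and symmetrically $y_i < 0$ forces $\Phi_i = e^{\gamma y_i} \leq 1$; that is, the ``non-dominant'' exponential in each setting is bounded by $1$, which is what makes the additive slack $\frac{2\gamma\eps}{n}$ (resp.\ $\frac{2\gamma\tilde{\eps}}{n}$) large enough to absorb the contribution.

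For the first inequality, fix $i$ with $y_i \geq 0$. If $i > \delta n$, then $r_i = \frac{1+\tilde\eps}{n}$, so $\Delta\overline{\Psi}_i = -\Psi_i \cdot \frac{\gamma\tilde\eps}{n} \leq 0$, and since the right-hand side $-\Psi_i \cdot \frac{\gamma\eps\delta}{4n} + \frac{2\gamma\eps}{n}$ is non-negative (use $\Psi_i \leq 1$ and $\delta<1$), the inequality is immediate. If instead $i \leq \delta n$, then $\Delta\overline{\Psi}_i = +\Psi_i \cdot \frac{\gamma\eps}{n}$; rearranging, the required inequality becomes $\Psi_i \cdot (1 + \delta/4) \leq 2$, which holds since $\Psi_i \leq 1$ and $\delta \in (0,1)$.

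For the second inequality, fix $i$ with $y_i < 0$. If $i \leq \delta n$, then $\Delta\overline{\Phi}_i = -\Phi_i \cdot \frac{\gamma\eps}{n} \leq 0$ and the right-hand side is again non-negative, so the bound follows. If $i > \delta n$, then $\Delta\overline{\Phi}_i = +\Phi_i \cdot \frac{\gamma\tilde\eps}{n}$; using $\tilde\eps = \eps\delta/(1-\delta)$, i.e.\ $\eps\delta = \tilde\eps(1-\delta)$, the inequality reduces to $\Phi_i \cdot \bigl(1 + (1-\delta)/4\bigr) \leq 2$, which holds since $\Phi_i \leq 1$ and $\delta \in (0,1)$.

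The analysis is essentially routine arithmetic; the only mild subtlety is noticing that the bound $\Psi_i, \Phi_i \leq 1$ (when on the ``correct side'' of the average) suffices to absorb the non-dominant contribution into the $\Oh(\gamma\eps/n)$ additive slack, which is exactly why the additive term has a uniform (in $i$) form. No novel ingredient is expected to appear; this claim will be used later to justify that, when we combine $\Delta\overline{\Phi}$ and $\Delta\overline{\Psi}$ in the main argument, we may restrict attention to the dominant term per bin and collect the non-dominant contributions into the eventual $c\gamma\eps$ error term.
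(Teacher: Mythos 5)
Your proposal is correct and follows essentially the same route as the paper: both expand $\Delta\overline{\Psi}_i$ and $\Delta\overline{\Phi}_i$ for the explicit vector $r$, split on $i \leq \delta n$ versus $i > \delta n$, and absorb the non-dominant term using $\Psi_i \leq 1$ (when $y_i \geq 0$) respectively $\Phi_i \leq 1$ (when $y_i < 0$) together with $\delta < 1$ and $\eps\delta = \tilde{\eps}(1-\delta)$. The paper merely compresses the two cases into a single $\max$ before rewriting, but the arithmetic is identical.
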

\begin{proof}
For any bin $i \in [n]$ with $y_i \geq 0$, we have that
\[
\Delta\overline{\Psi}_i 
 \leq \max\left\{+\Psi_i \cdot \frac{\gamma\eps}{n}, -\Psi_i \cdot \frac{\gamma\tilde{\eps}}{n} \right\} 
 = - \Psi_i \cdot \frac{\gamma\eps\delta}{4n} + \Psi_i \cdot \frac{\gamma}{n} \cdot \left( \frac{\eps\delta}{4} + \eps\right) 
 \leq - \Psi_i \cdot \frac{\gamma\eps\delta}{4n} + \Psi_i \cdot \frac{2\gamma\eps}{n},
\]
using that $\delta \leq 1$.

Similarly, for any bin $i \in [n]$ with $y_i < 0$, we have that
\[
\Delta\overline{\Phi}_i 
 \leq \max\left\{ +\Phi_i \cdot \frac{\gamma\tilde{\eps}}{n}, -\Phi_i \cdot \frac{\gamma\eps}{n} \right\} 
 = - \Phi_i \cdot \frac{\gamma\eps\delta}{4n} + \Phi_i \cdot \frac{\gamma}{n} \cdot \left( \frac{\eps\delta}{4} + \tilde{\eps}\right) \leq - \Phi_i \cdot \frac{\gamma\eps\delta}{4n} + \Phi_i \cdot \frac{2\gamma\tilde{\eps}}{n},
\]
using that $\eps\delta \leq \frac{\eps\delta}{1 - \delta} = \tilde{\eps}$.
\end{proof}

We now turn to the proof of \cref{thm:main_ptw}.

\begin{proof}[Proof of \cref{thm:main_ptw}]
Fix a labeling of the bins so that they are sorted non-increasingly according to their load in $x$. Let $p$ be the probability vector satisfying condition \COne for some $\epsilon \in (0,1)$ and $\delta \in \{ 1/n, \ldots, 1 \}$. Recall that the probability vector $r$ was defined as,
\begin{align*}
r_i := \begin{cases}
 \frac{1 - \eps}{n} & \text{if } i \leq \delta n, \\
 \frac{1 + \tilde{\eps}}{n} & \text{otherwise},
\end{cases} 
\end{align*}
where $\tilde{\eps} := \eps \cdot \frac{\delta}{1 - \delta}$. Thanks to the definition of $\tilde{\epsilon}$, it is clear that $r$ is also a probability vector. Further, for any $1 \leq k \leq \delta n$, due to condition \COne,
\[
 \sum_{i=1}^k p_i \leq (1 - \eps) \cdot \frac{k}{n} = \sum_{i=1}^k r_i, 
\]
and any $\delta n + 1 \leq k \leq n$,
\[
 \sum_{i=k}^{n} p_i \geq \left( 1 + \eps \cdot \frac{\delta}{1 - \delta} \right) \cdot \frac{n - k + 1}{n} = \sum_{i=k}^{n} r_i.
\]
This implies that $p$ is majorized by $r$.
Since $\Phi_i$ (and $\Psi_i$) are non-increasing (and non-decreasing) in $i \in [n]$, using \cref{lem:quasilem2}, the terms 
\[
\Delta\overline{\Phi} = \sum_{i = 1}^n \Phi_i \cdot \left(p_i - \frac{1}{n}\right) \cdot \gamma,
\quad 
\text{and}
\quad
\Delta\overline{\Psi} = \sum_{i = 1}^n \Psi_i \cdot \left(\frac{1}{n} - p_i\right) \cdot \gamma,
\]
are at least as large for $r$ than for $p$.
Hence, from now on, we will be working with $p = r$.

Recall that we partition overloaded bins $i$ with $y_i \geq 0$ into \textit{good overloaded bins} $\mathcal{G}_+$ with $p_i = \frac{1 - \eps}{n}$ and into \textit{bad overloaded bins} $\mathcal{B}_+$ with $p_i = \frac{1 + \tilde{\eps}}{n}$ (see \cref{tab:four_sets_of_bins}). These are called good bins, because any bin $i \in \mathcal{G}_+$ satisfies $\Delta\overline{\Phi}_i = -\Phi_i \cdot \frac{\gamma\eps}{n}$ and since $\Psi_i \leq 1$ for overloaded bins, this implies overall a drop in expectation for $\Gamma_i$. %

\textbf{Case A [$\mathcal{B}_+ \neq \emptyset$]:} We partition $\mathcal{B}_+$ into $\mathcal{B}_1 := \mathcal{B}_+ \cap \{i \in [n]: i \leq \frac{n}{2} \cdot (1+\delta) \}$ and $\mathcal{B}_2 := \mathcal{B}_+ \setminus \mathcal{B}_1$. In Case A.1 we handle the case where $\mathcal{B}_2 = \emptyset$ and in Case A.2 the case where $\mathcal{B}_2 \neq \emptyset$. Finally, in Case B we handle the case where $\mathcal{B}_- \neq \emptyset$ by a symmetry argument.

\begin{figure}[H]
    \centering
    \includegraphics[scale=0.55]{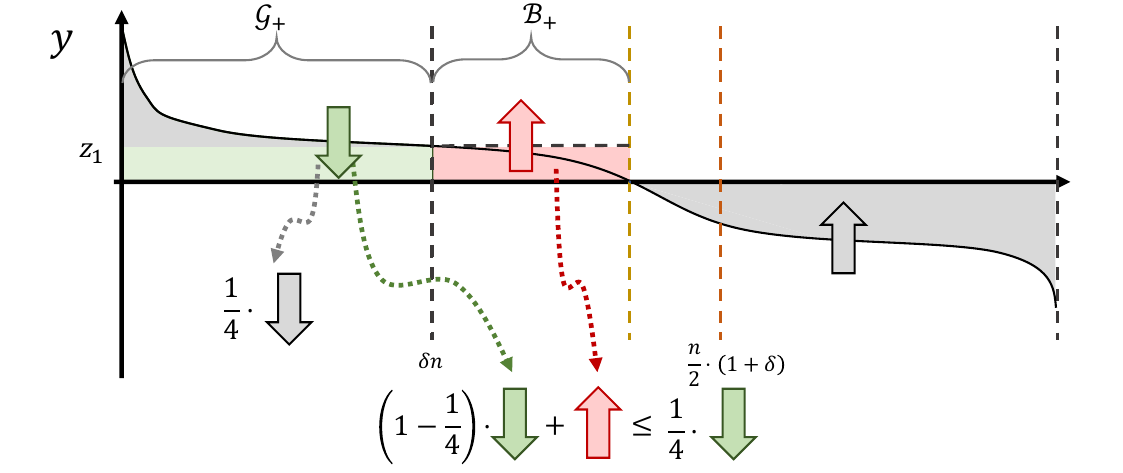}
    \caption{The dominant contributions for the bins in $\mathcal{G_+}$, $\mathcal{B}_+$ and $\mathcal{G}_-$ in Case A. In the analysis of Case A.1, the positive contribution of the bins in $\mathcal{B}_+$ (shown in red) is counteracted by a fraction of the negative contribution of the good bins $\mathcal{G}_+$ shown in green, (pessimistically) assuming they all have load $z_1$. In gray are the remaining negative contributions of the good bins. }
    \label{fig:case_a}
\end{figure}

\textbf{Case A.1 [$1 \leq |\mathcal{B}_+| \leq \frac{n}{2} \cdot (1 - \delta)$]:} Intuitively, in this case there are not many bad bins in $\mathcal{B}_+$, so their (positive) contribution is counteracted by the (negative) contribution of good bins in $\mathcal{G}_+$ (see \cref{fig:case_a}). To formalize this, let $z_1 := y_{\delta n}$ (by assumption, $|\mathcal{B}_{+}| \geq 1$ and so $z_1 \geq 0$). Then, $y_i \geq z_1$ for any bin $i \in \mathcal{G}_+$, and $y_i \leq z_1$, for any $i \in \mathcal{B}_+$. With some foresight, we use $\mathcal{B}_1$ instead of $\mathcal{B}_+$, since in this case $\mathcal{B}_1 = \mathcal{B}_+$ and it will also allow us to use \cref{eq:case_a1_good_and_bad_combined} in Case A.2. Hence,
\begin{align}
 \sum_{i \in \mathcal{B}_1} \Delta\overline{\Phi}_i
 & = \sum_{i \in \mathcal{B}_1} \Phi_i \cdot  \frac{\gamma\tilde{\eps}}{n} 
   = -\sum_{i \in \mathcal{B}_1} \Phi_i \cdot  \frac{\gamma\eps\delta}{4n} + \sum_{i \in \mathcal{B}_1} \Phi_i \cdot  \frac{\gamma}{n} \cdot \left( \tilde{\eps} + \frac{\eps\delta}{4} \right) \notag \\
 & \stackrel{(a)}{\leq} -\sum_{i \in \mathcal{B}_1} \Phi_i \cdot  \frac{\gamma\eps\delta}{4n} + \sum_{i \in \mathcal{B}_1} \Phi_i \cdot  \frac{3\gamma\tilde{\eps}}{2n} \notag \\
 & \stackrel{(b)}{\leq} -\sum_{i \in \mathcal{B}_1} \Phi_i \cdot  \frac{\gamma\eps\delta}{4n} + \frac{n}{2} \cdot (1-\delta) \cdot e^{\gamma z_1} \cdot  \frac{3\gamma\tilde{\eps}}{2n} \notag \\
 & \stackrel{(c)}{=} -\sum_{i \in \mathcal{B}_1} \Phi_i \cdot  \frac{\gamma\eps\delta}{4n} + e^{\gamma z_1} \cdot  \frac{3\gamma\eps\delta}{4}, \label{eq:b_plus_change}
\end{align}
using in $(a)$ that $\eps \delta \leq \tilde{\eps}$ and in $(b)$ that $y_i \leq z_1$ for $i \in \mathcal{B}_1$ (and so $\Phi_i \leq e^{\gamma z_1}$) and $|\mathcal{B}_1| \leq \frac{n}{2} \cdot (1 - \delta)$, and in $(c)$ that $\tilde{\eps} = \frac{\eps\delta}{1 - \delta}$. For bins in $\mathcal{G}_+$,
\begin{align}
\sum_{i \in \mathcal{G}_+} \Delta\overline{\Phi}_i  & = - \sum_{i \in \mathcal{G}_+} \Phi_i \cdot \frac{\gamma\eps}{n}
 = -\sum_{i \in \mathcal{G}_+} \Phi_i \cdot \frac{\gamma\eps}{4n} -\sum_{i \in \mathcal{G}_+} \Phi_i \cdot \frac{3\gamma\eps}{4n} \notag \\
 & \stackrel{(a)}{\leq} -\sum_{i \in \mathcal{G}_+} \Phi_i \cdot \frac{\gamma\eps}{4n} -\sum_{i \in \mathcal{G}_+} e^{\gamma z_1} \cdot \frac{3\gamma\eps}{4n} 
 \stackrel{(b)}{=} -\sum_{i \in \mathcal{G}_+} \Phi_i \cdot \frac{\gamma\eps}{4n} - e^{\gamma z_1} \cdot \frac{3\gamma\eps\delta}{4}, \label{eq:g_plus_change}
\end{align}
using in $(a)$ that $y_i \geq z_1$ for any $i \in \mathcal{G}_+$ and in $(b)$ that $|\mathcal{G}_+| = \delta n$, since $|\mathcal{B}_1| \geq 1$.

Hence, combining \cref{eq:b_plus_change} and \cref{eq:g_plus_change}, the contribution of overloaded bins to $\Phi$ is given by 
\begin{align}
\sum_{i \in \mathcal{G}_+ \cup \mathcal{B}_1} \Delta\overline{\Phi}_i
 & \leq -\sum_{i \in \mathcal{G}_+} \Phi_i \cdot \frac{\gamma\eps}{4n} -\sum_{i \in \mathcal{B}_1} \Phi_i \cdot  \frac{\gamma\eps\delta}{4n}
 \leq -\sum_{i \in \mathcal{G}_+ \cup \mathcal{B}_1} \Phi_i \cdot  \frac{\gamma\eps\delta}{4n}\label{eq:case_a1_good_and_bad_combined}.
\end{align}
Therefore, aggregating the contributions to $\Delta\overline{\Gamma}$ as described above, we get that
\begin{align*}
\Delta\overline{\Gamma} 
 & = \sum_{i \in \mathcal{G}_+} \Delta\overline{\Gamma}_i + \sum_{i \in \mathcal{B}_1} \Delta\overline{\Gamma}_i + \sum_{i \in \mathcal{G}_-} \Delta\overline{\Gamma}_i \\
 & = \sum_{i \in \mathcal{G}_+ \cup \mathcal{B}_1} \Delta\overline{\Phi}_i + \sum_{i \in \mathcal{G}_-} \Delta\overline{\Psi}_i + \sum_{i \in \mathcal{G}_+ \cup \mathcal{B}_1} \Delta\overline{\Psi}_i + \sum_{i \in \mathcal{G}_-} \Delta\overline{\Phi}_i \\
 & \!\!\!\! \stackrel{(\ref{eq:case_a1_good_and_bad_combined})}{\leq} - \sum_{i \in \mathcal{G}_+ \cup \mathcal{B}_1} \Phi_i \cdot \frac{\gamma\eps\delta}{4n} - \sum_{i \in \mathcal{G}_-} \Psi_i \cdot \frac{\gamma\tilde{\eps}}{n} + \sum_{i \in \mathcal{G}_+ \cup \mathcal{B}_1} \Delta\overline{\Psi}_i + \sum_{i \in \mathcal{G}_-} \Delta\overline{\Phi}_i \\
 & \stackrel{(a)}{\leq} - \sum_{i \in \mathcal{G}_+ \cup \mathcal{B}_1} \Phi_i \cdot \frac{\gamma\eps\delta}{4n} - \sum_{i \in \mathcal{G}_-} \Psi_i \cdot \frac{\gamma\eps\delta}{4n} - \sum_{i \in \mathcal{G}_+ \cup \mathcal{B}_1} \Psi_i \cdot \frac{\gamma\eps\delta}{4n} - \sum_{i \in \mathcal{G}_-} \Phi_i \cdot \frac{\gamma\eps\delta}{4n} + \sum_{i = 1}^n \frac{2\gamma}{n} \cdot \max\{\eps, \tilde{\eps} \} \\
 & \stackrel{(b)}{=} - \sum_{i = 1}^n \Gamma_i \cdot \frac{\gamma\eps\delta}{4n} + 2\gamma \cdot \max\{\eps, \tilde{\eps} \}
 = - \Gamma \cdot \frac{\gamma\eps\delta}{4n} + 2\gamma \cdot \max\{\eps, \tilde{\eps} \},
\end{align*}
using in $(a)$ that $\eps\delta \leq \tilde{\eps}$ and \cref{clm:change_of_dominant} to bound the contributions of the non-dominant terms and in $(b)$ that $\Gamma_i = \Phi_i + \Psi_i$ for any bin $i \in [n]$.

\begin{figure}[H]
    \centering
    \includegraphics[scale=0.55]{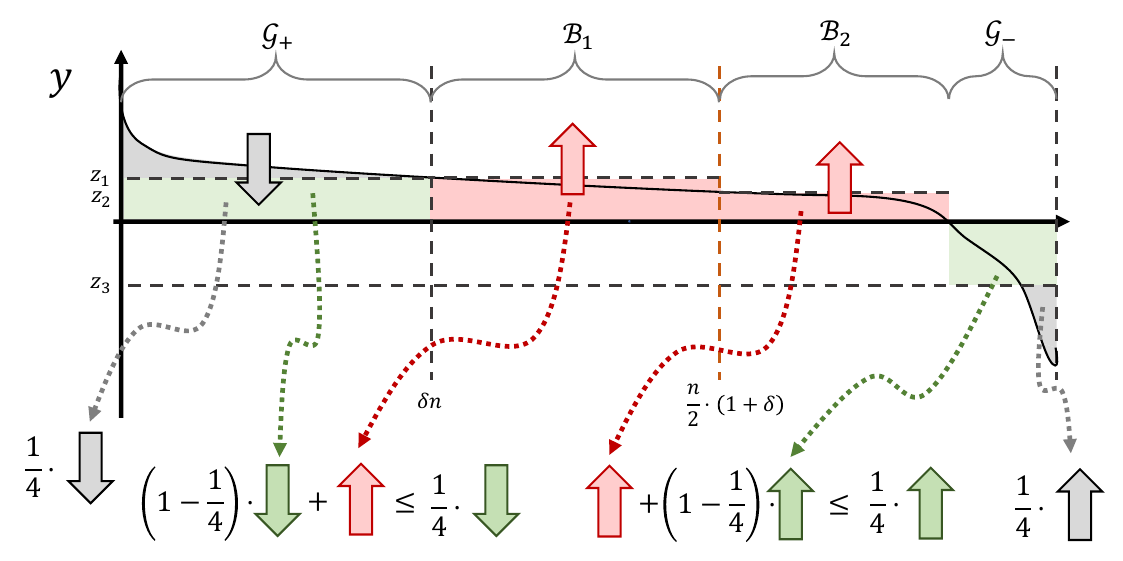}
    \caption{Case A.2: The dominant (positive) contribution of the bins in $\mathcal{B}_1$ (shown in red) is counteracted by a fraction of the dominant (negative) contribution of the bins in $\mathcal{G}_+$ (shown in green) as in Case A.1. The dominant (negative) contribution of the bins in $\mathcal{B}_2$ is counteracted by a fraction of the dominant (negative) contribution of the bins in $\mathcal{G}_-$ (shown in green), when $z_2$ is sufficiently large. In grey, the dominant decrease terms for $\mathcal{G}_+$ and $\mathcal{G}_-$ (which do not contribute to counteracting the increase).}
    \label{fig:case_b_2}
\end{figure}

\textbf{Case A.2 [$|\mathcal{B}_+| > \frac{n}{2} \cdot (1 - \delta)$]:} Recall that we partitioned the bins $\mathcal{B}_+$ into $\mathcal{B}_1 := \mathcal{B}_+ \cap \{i \in [n]: i \leq \frac{n}{2} \cdot (1+\delta) \}$ and $\mathcal{B}_2 := \mathcal{B}_+ \setminus \mathcal{B}_1$. We will counteract the positive contribution $\Delta\overline{\Phi}_i$ for bins $i \in \mathcal{B}_1$ by the negative contribution of the bins in $\mathcal{G}_+$ as in \cref{eq:case_a1_good_and_bad_combined} in Case A.1. For that of bins in $\mathcal{B}_2$ we will consider two cases based on $z_2 := y_{\frac{n}{2} \cdot (1 + \delta)} > 0$, the load of the heaviest bin in $\mathcal{B}_2$. Similarly to \cref{clm:change_of_dominant}, we obtain a bound for the dominant contribution of the bins in $\mathcal{B}_2$
\begin{align} 
\sum_{i \in \mathcal{B}_2} \Delta\overline{\Phi}_i
 & = \sum_{i \in \mathcal{B}_2} \Phi_i \cdot \frac{\gamma\tilde{\eps}}{n} 
 = - \sum_{i \in \mathcal{B}_2} \Phi_i \cdot \frac{\gamma\eps\delta}{4n} + \sum_{i \in \mathcal{B}_2} \Phi_i \cdot \frac{\gamma}{n} \cdot \left( \frac{\eps\delta}{4} + \tilde{\eps} \right) \notag \\
 & \leq - \sum_{i \in \mathcal{B}_2} \Phi_i \cdot \frac{\gamma\eps\delta}{4n} + \sum_{i \in \mathcal{B}_2} \Phi_i \cdot \frac{2\gamma\tilde{\eps}}{n} \notag \\
 & \stackrel{(a)}{\leq} - \sum_{i \in \mathcal{B}_2} \Phi_i \cdot \frac{\gamma\eps\delta}{4n} + |\mathcal{B}_2| \cdot e^{\gamma z_2} \cdot \frac{2\gamma\tilde{\eps}}{n} \notag \\
 & \stackrel{(b)}{\leq} - \sum_{i \in \mathcal{B}_2} \Phi_i \cdot \frac{\gamma\eps\delta}{4n} + e^{\gamma z_2} \cdot (\gamma\eps\delta), \label{eq:b2_contribution}
\end{align}
using in $(a)$ that $y_i \leq z_2$ for $i \in \mathcal{B}_2$ and in $(b)$ that $|\mathcal{B}_2| \leq \frac{n}{2} \cdot (1 - \delta)$ and $\tilde{\eps} = \frac{\eps\delta}{1 - \delta}$.

\textbf{Case A.2.1 [$z_2 \leq \frac{1}{\gamma} \cdot \frac{1 - \delta}{2\delta} \cdot \log(8/3)$]:} In this case, the loads of the bins in $\mathcal{B}_2$ are small enough for their contribution to be counteracted by the additive term. More precisely, we get that %
\begin{align} \label{eq:case_b1_B2_bound}
\sum_{i \in \mathcal{B}_2} \Delta\overline{\Phi}_i
 & \stackrel{(\ref{eq:b2_contribution})}{\leq} - \sum_{i \in \mathcal{B}_2} \Phi_i \cdot \frac{\gamma\eps\delta}{4n} + e^{\gamma z_2} \cdot (\gamma\eps\delta) \leq - \sum_{i \in \mathcal{B}_2} \Phi_i \cdot \frac{\gamma\eps\delta}{4n} + e^{\frac{1 - \delta}{2\delta} \cdot \log(8/3)} \cdot (\gamma\eps\delta).
\end{align}

Hence, we can now aggregate the contributions as follows
\begin{align*}
\Delta\overline{\Gamma} 
 & = \sum_{i \in \mathcal{G}_+} \Delta\overline{\Gamma}_i + \sum_{i \in \mathcal{B}_1} \Delta\overline{\Gamma}_i + \sum_{i \in \mathcal{B}_2} \Delta\overline{\Gamma}_i + \sum_{i \in \mathcal{G}_-} \Delta\overline{\Gamma}_i \\
 & = \sum_{i \in \mathcal{G}_+ \cup \mathcal{B}_1} \Delta\overline{\Phi}_i + \sum_{i \in \mathcal{B}_2} \Delta\overline{\Phi}_i + \sum_{i \in \mathcal{G}_-} \Delta\overline{\Psi}_i + \sum_{i \in \mathcal{G}_+ \cup \mathcal{B}_+} \Delta\overline{\Psi}_i + \sum_{i \in \mathcal{G}_-} \Delta\overline{\Phi}_i \\
 & \stackrel{(a)}{\leq} - \sum_{i \in \mathcal{G}_+ \cup \mathcal{B}_1} \Phi_i \cdot \frac{\gamma\eps\delta}{4n} 
 - \sum_{i \in \mathcal{B}_2} \Phi_i \cdot \frac{\gamma\eps\delta}{4n}
 + e^{ \frac{1 - \delta}{2\delta} \cdot \log(8/3)} \cdot (\gamma\eps\delta)
 - \sum_{i \in \mathcal{G}_-} \Psi_i \cdot \frac{\gamma\eps\delta}{4n} \\
 & \qquad - \sum_{i \in \mathcal{G}_+ \cup \mathcal{B}_+} \Psi_i \cdot \frac{\gamma\eps\delta}{4n} - \sum_{i \in \mathcal{G}_-} \Phi_i \cdot \frac{\gamma\eps\delta}{4n} + \sum_{i = 1}^n \frac{2\gamma}{n} \cdot \max\{\eps, \tilde{\eps} \} \\
 & \leq - \sum_{i = 1}^n \Gamma_i \cdot \frac{\gamma\eps\delta}{4n} +  4\gamma \cdot \max\left\{\eps, \tilde{\eps}, \eps\delta \cdot e^{ \frac{1 - \delta}{2\delta} \cdot \log(8/3)} \right\},
\end{align*}
using in $(a)$: $(i)$ \cref{eq:case_a1_good_and_bad_combined} for bounding the contribution of bins in $\mathcal{B}_1$, $(ii)$ the \cref{eq:case_b1_B2_bound} for bounding the contribution of bins in $\mathcal{B}_2$ and $(iii)$ the \cref{clm:change_of_dominant} for bounding the contributions of the non-dominant terms.

\textbf{Case A.2.2 [$z_2 > \frac{1}{\gamma} \cdot \frac{1 - \delta}{2\delta} \cdot \log(8/3)$]:} In this case, $z_2$ being large means that there are substantially more holes (ball slots below the average line) in the underloaded bins than balls in the overloaded bins of $\mathcal{B}_1$. Hence, as we will prove below the negative contribution $\Delta\overline{\Psi}$ for bins in $\mathcal{G}_-$ counteracts the positive contribution of $\Delta\overline{\Phi}$ for $\mathcal{B}_1$ (\cref{fig:case_b_2}). 

Next note that because $\Psi_i$ is non-decreasing in $i \in [n]$, the term $\sum_{i \in \mathcal{G}_-} \Psi_i$ is minimised when all underloaded bins are equal to the same load $-z_3 < 0$, i.e., $\sum_{i \in \mathcal{G}_-} \Psi_i \geq |\mathcal{G}_-| \cdot e^{\gamma z_3}$.
Further, note that $z_3 \geq \frac{z_2 \cdot (|\mathcal{G}_+| + |\mathcal{B}_+|)}{|\mathcal{G}_-|} \geq \frac{z_2 \cdot \frac{n}{2} \cdot (1+\delta)}{|\mathcal{G}_-|}$ by the assumption $|\mathcal{B}_+| > \frac{n}{2} \cdot (1-\delta)$ and $|\mathcal{G}_+| = \delta n$, and therefore,
\[
 \sum_{i \in \mathcal{G}_-} \Psi_i \geq |\mathcal{G}_-| \cdot e^{\gamma \cdot \frac{z_2 \cdot \frac{n}{2} \cdot (1+\delta)}{|\mathcal{G}_-|}} =: g(|\mathcal{G}_-|),
\]
where we seek to lower bound the function $g:[1,n] \rightarrow \mathbb{R}$. To this end, we will first upper bound $|\mathcal{G}_-|$, using the assumption for Case A.2, 
\[
|\mathcal{G}_-| = n -  |\mathcal{G}_+| - |\mathcal{B}_+| \leq n - n\delta - \frac{n}{2} \cdot (1 - \delta) = \frac{n}{2} \cdot (1 - \delta).
\]
Further, $\frac{n}{2} \cdot (1-\delta) \leq \gamma \cdot z_2 \cdot \frac{n}{2} \cdot (1+\delta) =: M$, by the precondition on $z_2$ (and that $2\delta \leq 1 + \delta$), and so we also have $|\mathcal{G}_-| \leq M$.
By \cref{lem:decreasing_fn}, the function $f(x) = x \cdot e^{k/x}$ is decreasing for $0 < x \leq k$, and so $g$ is decreasing for $1 < |\mathcal{G}_-| \leq \frac{n}{2} \cdot (1-\delta) \leq M$.
Hence, $g(|\mathcal{G}_-|)$ is minimised by $|\mathcal{G}_-| = \frac{n}{2} \cdot (1-\delta)$. Therefore,
\begin{align}
 \sum_{i \in \mathcal{G}_-} \Delta\overline{\Psi}_i 
 = - \sum_{i \in \mathcal{G}_-} \Psi_i \cdot \frac{\gamma \tilde{\epsilon}}{n} %
 \geq -\min_{|\mathcal{G}_-| \in [1,\frac{n}{2} \cdot (1-\delta)]} |\mathcal{G}_-| \cdot e^{\frac{M}{|\mathcal{G}_-|}} \cdot \frac{\gamma \tilde{\epsilon}}{n}
 = -\frac{n}{2} \cdot (1-\delta) \cdot e^{\frac{M}{\frac{n}{2} \cdot (1-\delta)}} \cdot \frac{\gamma \tilde{\epsilon}}{n}. %
 \label{eq:previous}
\end{align}
We can lower bound the exponent of the last term as follows,
\[
\frac{M}{\frac{n}{2} \cdot (1 - \delta)} = \frac{\gamma z_2 \cdot (1 + \delta)}{1 - \delta} = \gamma z_2 + \gamma z_2 \cdot \frac{2\delta}{1 - \delta} \geq \gamma z_2 + \log(8/3),
\]
using the assumption that $z_2 > \frac{1}{\gamma} \cdot \frac{1 - \delta}{2\delta} \cdot \log(8/3)$. 

Now we will split the contributions of the bins in $\mathcal{G}_-$,
\begin{align}
\sum_{i \in \mathcal{G}_-} \Delta\overline{\Psi}_i  
 & = - \sum_{i \in \mathcal{G}_-} \Psi_i \cdot \frac{\gamma \tilde{\eps}}{n} 
 = -\sum_{i \in \mathcal{G}_-} \Psi_i \cdot \frac{\gamma \tilde{\eps}}{4n} - \sum_{i \in \mathcal{G}_-} \Psi_i \cdot \frac{3\gamma\tilde{\eps}}{4n} \notag \\
 & \!\!\!\! \stackrel{(\ref{eq:previous})}{\leq} -\sum_{i \in \mathcal{G}_-} \Psi_i \cdot \frac{\gamma \tilde{\eps}}{4n} - \frac{n}{2} \cdot (1-\delta) \cdot e^{\gamma z_2 + \log(8/3) } \cdot \frac{3\gamma\tilde{\eps}}{4n} \notag \\ 
 & = 
 -\sum_{i \in \mathcal{G}_-} \Psi_i \cdot \frac{\gamma \tilde{\eps}}{4n} - e^{\gamma z_2} \cdot (\gamma \eps \delta), \label{eq:g_minus_lb}
\end{align}
using in the last equality that $\tilde{\eps} = \frac{\eps\delta}{1 - \delta}$.

We will now show that the dominant increase for bins in $\mathcal{B}_2$ is counteracted by a fraction of the dominant decrease of those in $\mathcal{G}_-$. Combining \cref{eq:b2_contribution} and \cref{eq:g_minus_lb}
\begin{align}
\sum_{i \in \mathcal{B}_2} \Delta\overline{\Phi}_i + \sum_{i \in \mathcal{G}_-} \Delta\overline{\Psi}_i
 & \leq -\sum_{i \in \mathcal{B}_2} \Phi_i \cdot \frac{\gamma\eps\delta}{4n} + e^{\gamma z_2} \cdot (\gamma\eps\delta) -\sum_{i \in \mathcal{G}_-} \Psi_i \cdot \frac{\gamma \tilde{\eps}}{4n} - \sum_{i \in \mathcal{G}_-} \Psi_i \cdot \frac{3\gamma\tilde{\eps}}{4n} \notag \\
 & \!\!\!\! \stackrel{(\ref{eq:g_minus_lb})}{\leq}  -\sum_{i \in \mathcal{B}_2} \Phi_i \cdot \frac{\gamma\eps\delta}{4n} -\sum_{i \in \mathcal{G}_-} \Psi_i \cdot \frac{\gamma \tilde{\eps}}{4n} \leq -\sum_{i \in \mathcal{B}_2} \Phi_i \cdot \frac{\gamma\eps\delta}{4n} -\sum_{i \in \mathcal{G}_-} \Psi_i \cdot \frac{\gamma\eps\delta}{4n}. \label{eq:b2_and_g_minus}
\end{align}
Finally, overall the contributions are given by
\begin{align}
\Delta\overline{\Gamma}
 & = \sum_{i \in \mathcal{G}_+} \Delta\overline{\Gamma}_i + \sum_{i \in \mathcal{B}_1} \Delta\overline{\Gamma}_i + \sum_{i \in \mathcal{B}_2} \Delta\overline{\Gamma}_i + \sum_{i \in \mathcal{G}_-} \Delta\overline{\Gamma}_i \notag \\
 & = \sum_{i \in \mathcal{G}_+ \cup \mathcal{B}_1} \Delta\overline{\Phi}_i + \left(\sum_{i \in \mathcal{B}_2} \Delta\overline{\Phi}_i + \sum_{i \in \mathcal{G}_-} \Delta\overline{\Psi}_i\right) + \sum_{i \in \mathcal{G}_+ \cup \mathcal{B}_+} \Delta\overline{\Psi}_i + \sum_{i \in \mathcal{G}_-} \Delta\overline{\Phi}_i \notag \\
 & \stackrel{(a)}{\leq} - \sum_{i \in \mathcal{G}_+ \cup \mathcal{B}_+} \Phi_i \cdot \frac{\gamma\eps \delta}{4n} -\sum_{i \in \mathcal{B}_2} \Phi_i \cdot \frac{\gamma\eps\delta}{4n} -\sum_{i \in \mathcal{G}_-} \Psi_i \cdot \frac{\gamma\eps\delta}{4n} - \sum_{i \in \mathcal{G}_+ \cup \mathcal{B}_+} \Psi_i \cdot \frac{\gamma\eps\delta}{4n} - \sum_{i \in \mathcal{G}_-} \Phi_i \cdot \frac{\gamma\eps\delta}{4n} \notag \\
 & \qquad + \sum_{i = 1}^n \frac{2\gamma}{n} \cdot \max\{ \eps, \tilde{\eps} \} \notag \\
 & = - \Gamma \cdot \frac{\gamma\eps \delta}{4n} + 2\gamma \cdot \max\{ \eps, \tilde{\eps} \}, \notag
\end{align}
using in $(a)$ that $(i)$ \cref{eq:case_a1_good_and_bad_combined} for bounding the contribution of the bins in $\mathcal{B}_1$, $(ii)$ the \cref{eq:b2_and_g_minus} for bounding the contribution of the bins in $\mathcal{B}_2 \cup \mathcal{G}_-$ and $(iii)$ \cref{clm:change_of_dominant} for bounding the non-dominant terms.

Combining the three subcases for Case A we have that
\[
\Delta\overline{\Gamma} 
 \leq -\Gamma \cdot \frac{\gamma\eps \delta}{4n} + c',
\]
where 
\[
c' := \max\left\{4\eps, 4\tilde{\eps}, 4 \eps \delta \cdot e^{\frac{1-\delta}{2\delta} \cdot \log(8/3)} \right\} \cdot \gamma
 = 4 \cdot \max\Big\{1, \frac{\delta}{1-\delta},  \delta \cdot e^{\frac{1-\delta}{2\delta} \cdot \log(8/3)} \Big\} \cdot \gamma \eps.
\]

\begin{figure}
    \centering
    \includegraphics[scale=0.55]{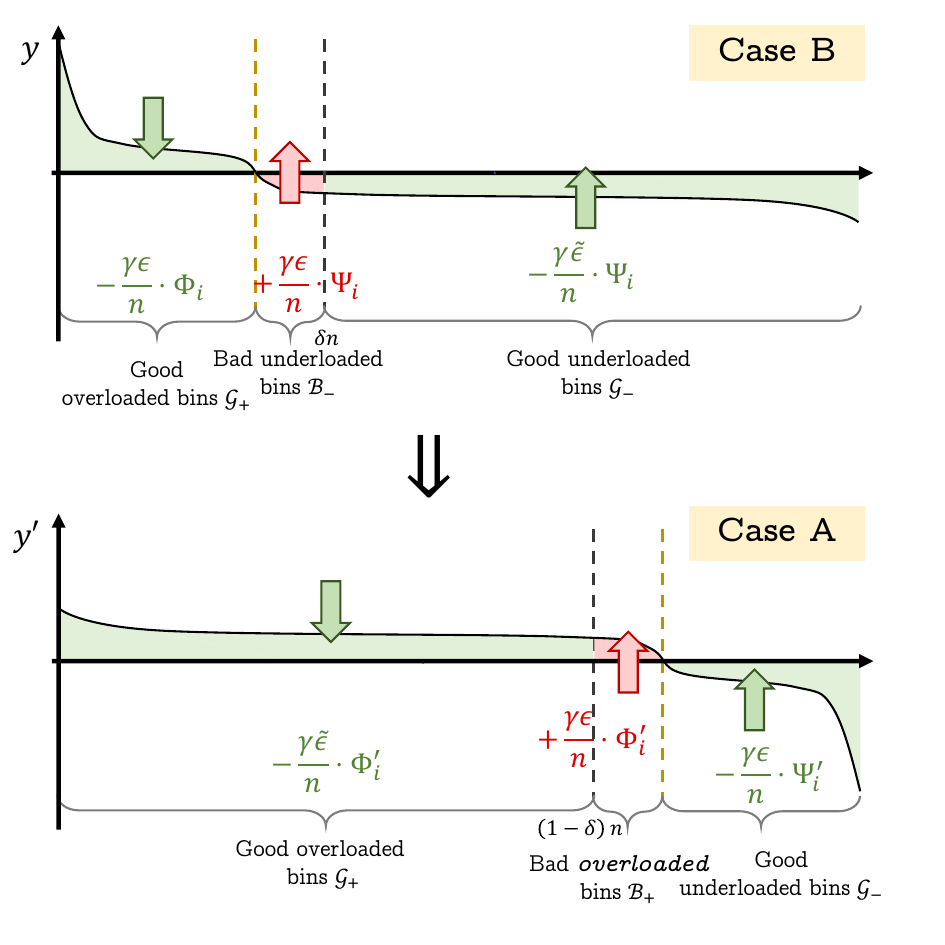}
    \caption{Visualization of how negating and sorting the vector $y$ to obtain $y'$, leads from Case $B$ to Case A with $\delta \to 1-\delta$ and $\eps \to \tilde{\eps}$. Also note that $\Phi' = \Psi$, $\Psi' = \Phi$ and $\Gamma' = \Gamma$.}
    \label{fig:symmetric_case}
\end{figure}

\textbf{Case B [$\mathcal{B}_- \neq \emptyset$]:} This case is symmetric to Case A, by interchanging $\Phi$ with $\Psi$, $\delta$ with $1-\delta$, $\eps \delta$ with $\tilde{\eps} (1 - \delta)$, and negating and sorting the normalized load vector (in non-increasing order). In particular, the three sub-cases are:
\begin{itemize}\itemsep0pt
  \item \textbf{Case B.1 [$1 \leq |\mathcal{B}_-| \leq \frac{n}{2} \cdot \delta$]}
  \item \textbf{Case B.2.1 [$|\mathcal{B}_-| > \frac{n}{2} \cdot \delta$, $z_2' \leq \frac{1}{\gamma} \cdot \frac{\delta}{2 \cdot (1 - \delta)} \cdot \log(8/3)$]} where $z_2' := y_{\frac{n}{2} \cdot \delta}$
  \item \textbf{Case B.2.2 [$|\mathcal{B}_-| > \frac{n}{2} \cdot \delta$, $z_2' > \frac{1}{\gamma} \cdot \frac{\delta}{2 \cdot (1 - \delta)} \cdot \log(8/3)$]}
\end{itemize}
Combining the three subcases for Case B we have that
\[
\Delta\overline{\Gamma} 
 \leq -\Gamma \cdot \frac{\gamma\eps \delta}{4n} + c'',
\]
where 
\[
c'' := \max\left\{4\eps, 4\tilde{\eps}, 4 \tilde{\eps} (1-\delta) \cdot e^{\frac{\delta}{2(1 - \delta)} \cdot \log(8/3)} \right\} \cdot \gamma
 = 4 \cdot \max\Big\{1, \frac{\delta}{1-\delta}, \delta \cdot e^{\frac{\delta}{2(1 - \delta)} \cdot \log(8/3)} \Big\} \cdot \gamma \eps,
\]
recalling that $\eps \delta = \tilde{\eps} (1-\delta)$.

Combining the Case A and Case B, we get that
\[
\Delta\overline{\Gamma} 
 \leq -\Gamma \cdot \frac{\gamma\eps \delta}{4n} + c\gamma\eps,
\]
where $c := 4 \cdot \max\Big\{ 1, \frac{\delta}{1 - \delta}, e^{ \frac{1 - \delta}{2\delta} \cdot \log(8/3)} \cdot \frac{\delta}{1-\delta}, \delta \cdot e^{ \frac{\delta}{2 \cdot (1 - \delta)} \cdot \log(8/3)}\Big\}$, recalling that $\tilde{\eps} := \eps \cdot \frac{\delta}{1 - \delta} $.
\end{proof}

By scaling the quantities $\Delta\overline{\Phi}$ and $\Delta\overline{\Psi}$ in \cref{thm:main_ptw} by some $R > 0$ (usually the number of steps in each round, e.g., $R := b$ for the \Batched setting) and selecting a sufficiently small smoothing parameter $\gamma$, we obtain the main theorem.

{\renewcommand{\thethm}{\ref{thm:hyperbolic_cosine_expectation}}
	\begin{thm}[Restated]
\MainHyperbolicCosineExpectation
	\end{thm}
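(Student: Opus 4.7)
The plan is to reduce the main theorem directly to the Key Lemma (\cref{thm:main_ptw}) by showing that the assumed bounds on $\sum_i \ex{\Delta\Phi_i^{t+1} \mid \mathfrak{F}^t}$ and $\sum_i \ex{\Delta\Psi_i^{t+1} \mid \mathfrak{F}^t}$ differ from the deterministic drifts $R \cdot \Delta\overline{\Phi}$ and $R \cdot \Delta\overline{\Psi}$ only by a controllable error term of order $R \cdot K \gamma^2 / n$ per bin. Summing these two preconditions gives
\[
\ex{\Delta\Gamma^{t+1} \,\mid\, \mathfrak{F}^t} \leq R \cdot \Delta\overline{\Gamma}^t + R \cdot K \cdot \frac{\gamma^2}{n} \cdot \Gamma^t,
\]
where $\Delta\overline{\Gamma}^t$ is defined on the current load vector $y^t$ and the probability vector $p^t$ exactly as in the Key Lemma.

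Next, I would apply \cref{thm:main_ptw} to $\Delta\overline{\Gamma}^t$, using that $p^t$ satisfies condition \COne with parameters $\delta, \eps$ and that $\gamma \in (0, 1]$; this yields $\Delta\overline{\Gamma}^t \leq -\Gamma^t \cdot \frac{\gamma\eps\delta}{4n} + c\gamma\eps$ for the constant $c = c(\delta)$ produced by the Key Lemma. Substituting back,
\[
\ex{\Delta\Gamma^{t+1} \,\mid\, \mathfrak{F}^t} \leq -\Gamma^t \cdot R \cdot \frac{\gamma\eps\delta}{4n} + \Gamma^t \cdot R \cdot K \cdot \frac{\gamma^2}{n} + R \cdot c\gamma\eps.
\]
Here the hypothesis $\gamma \leq \frac{\eps\delta}{8K}$ is used to absorb the error: it gives $K\gamma \leq \frac{\eps\delta}{8}$, hence $K\gamma^2/n \leq \gamma\eps\delta/(8n)$, and the first two terms combine into $-\Gamma^t \cdot R \cdot \frac{\gamma\eps\delta}{8n}$, which is the claimed drift inequality.

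For the second conclusion $\ex{\Gamma^t} \leq \frac{8c}{\delta} \cdot n$, I would take unconditional expectations in the drift inequality to obtain a linear recursion of the form $a_{t+1} \leq a_t \cdot (1 - \alpha) + \beta$ with $\alpha := R \cdot \frac{\gamma\eps\delta}{8n}$ and $\beta := R \cdot c\gamma\eps$, and then invoke the geometric/arithmetic-sum lemma (\cref{lem:geometric_arithmetic}) to conclude $\ex{\Gamma^t} \leq \beta/\alpha = \frac{8cn}{\delta}$, which remains valid at $t=0$ since $\Gamma^0 = 2n$ and $c \geq 1$ (so the base case is dominated by the claimed bound, possibly after enlarging $c$ by a harmless constant).

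The whole argument is almost mechanical once the Key Lemma is in place: the only nontrivial step is the choice of $\gamma$ that makes the $K\gamma^2/n$ ``second-order'' error at most half of the linear drift from the Key Lemma. The real work — and the main obstacle — lies entirely in \cref{thm:main_ptw}, whose case analysis (handling $\mathcal{B}_+ \neq \emptyset$ vs.\ $\mathcal{B}_- \neq \emptyset$, and within each the small-bad-set vs.\ large-bad-set regime with the threshold on $z_2$) is what actually produces the explicit constant $c(\delta)$. Given that lemma, upgrading from the deterministic statement about $(p, x)$ to the probabilistic statement about $(p^t, y^t, \mathfrak{F}^t)$ and handling the $K\gamma^2/n$ correction is genuinely a short calculation.
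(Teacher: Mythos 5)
Your proposal is correct and follows essentially the same route as the paper: sum the two preconditions, apply the Key Lemma to the deterministic drift $\Delta\overline{\Gamma}$, absorb the $R\cdot K\gamma^2\Gamma^t/n$ error using $\gamma \leq \frac{\eps\delta}{8K}$ to halve the drift constant from $\frac{\gamma\eps\delta}{4n}$ to $\frac{\gamma\eps\delta}{8n}$, and conclude the expectation bound via \cref{lem:geometric_arithmetic}. Your remark about checking the base case $\Gamma^0 = 2n$ is a sensible (and easily verified) addition that the paper leaves implicit.
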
 }
	\addtocounter{thm}{-1}

\begin{proof}
Consider a labeling of the bins so that they are sorted in non-increasing order of the loads at step $t$. Applying \cref{thm:main_ptw} for the current load vector $x^t$ and the quantities \[
\Delta\overline{\Phi} := \sum_{i = 1}^n \Phi_i^t \cdot \left(p_i - \frac{1}{n}\right) \cdot \gamma \quad \text{ and } \quad \Delta\overline{\Psi} := \sum_{i = 1}^n \Psi_i^t \cdot \left(\frac{1}{n} - p_i\right) \cdot \gamma,
\]
we get that
\begin{align} \label{eq:application_res}
\Delta\overline{\Phi} + \Delta\overline{\Psi} %
\leq -\frac{\gamma\eps\delta}{4n} \cdot \Gamma^t + c\gamma\eps.
\end{align}
By the assumptions, 
\begin{align} \label{eq:assumption}
\Ex{\left. \Delta\Gamma^{t} \,\right|\, \mathfrak{F}^t} = \Ex{\left. \Delta\Phi^{t} \,\right|\, \mathfrak{F}^t} + \Ex{\left. \Delta\Psi^{t} \,\right|\, \mathfrak{F}^t} \leq R \cdot \left(\Delta\overline{\Phi} + \Delta\overline{\Psi} + K \cdot \frac{\gamma^2}{n} \cdot \Gamma^t \right).
\end{align}
Hence, combining \cref{eq:application_res} and  \cref{eq:assumption}, we get
\begin{align*}
\Ex{\left. \Delta\Gamma^{t} \right| \mathfrak{F}^t} 
& \leq R \cdot \left(- \frac{\gamma\eps\delta}{4n} \cdot \Gamma^t + c\gamma\eps + K \cdot \frac{\gamma^2}{n} \cdot \Gamma^t \right) 
  \leq - R \cdot \frac{\gamma\eps\delta}{8n} \cdot \Gamma^t + R \cdot c\gamma\eps,
\end{align*}
using that $\gamma \leq \frac{\eps\delta}{8K}$.

Finally, by \cref{lem:geometric_arithmetic}~$(ii)$, the second statement follows.
\end{proof}

\section{Tools for Verifying Drift Preconditions} \label{sec:three_useful_lemmas}

In this section, we prove three useful lemmas, for verifying the drift inequalities for the overload and underload potentials, which are the preconditions of \cref{thm:hyperbolic_cosine_expectation}.

We start by verifying the preconditions of \cref{thm:hyperbolic_cosine_expectation} for the sequential setting with weights, i.e., when we allocate to a single bin in each round.

\begin{lem} \label{lem:single_step_change}
Consider any allocation process with probability allocation vector $p^t$ satisfying condition \CTwo for $C > 1$ at every step $t \geq 0$. Further, consider the \Weighted setting with weights from a $\FiniteMgf(S)$ distribution with $S \geq 1$. Then, for the potentials $\Phi := \Phi(\gamma)$ and $\Psi := \Psi(\gamma)$ with any smoothing parameter $\gamma \in \big(0, \frac{1}{S} \big]$, we have for any step $t \geq 0$,
\begin{align} \label{eq:one}
\Ex{\left. \Delta\Phi^{t+1} \,\right|\, \mathfrak{F}^t} 
  \leq \sum_{i = 1}^n \Phi_i^t \cdot \left( \left( p_i^t - \frac{1}{n} \right) \cdot \gamma + 2CS \cdot \frac{\gamma^2}{n} \right),
\end{align}
and
\begin{align} \label{eq:two}
\Ex{\left. \Delta\Psi^{t+1} \,\right|\, \mathfrak{F}^t} 
  \leq \sum_{i = 1}^n \Psi_i^t \cdot \left( \left( \frac{1}{n} - p_i^t \right) \cdot \gamma + 2CS \cdot \frac{\gamma^2}{n} \right).
\end{align}
\end{lem}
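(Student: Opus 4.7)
The plan is to work bin-by-bin, first conditioning on the weight and on which bin receives the ball, and then summing. The argument for \cref{eq:two} is entirely symmetric to that for \cref{eq:one}, obtained by substituting $\gamma$ with $-\gamma$ throughout, so I will describe only the $\Phi$ case.

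First, I fix a bin $b \in [n]$ and condition on $\mathfrak{F}^t$ together with the weight $w := w^{t+1}$ of the incoming ball. Writing $z_b^t := x_b^t - W^t/n$ for the normalized load of bin $b$, the update rule gives $z_b^{t+1} = z_b^t + w \cdot (1 - 1/n)$ when $b$ receives the ball, and $z_b^{t+1} = z_b^t - w/n$ otherwise. Letting $P^t(b) := p_{\Rank^t(b)}^t$ denote the probability that bin $b$ is sampled, this yields
\[
\Ex{\left. e^{\gamma z_b^{t+1}} \,\right|\, \mathfrak{F}^t, w} = e^{\gamma z_b^t} \cdot e^{-\gamma w/n} \cdot \bigl( P^t(b) \cdot e^{\gamma w} + 1 - P^t(b) \bigr).
\]

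Next I take expectation over $w$. Combining the $e^{-\gamma w/n}$ factor with the $e^{\gamma w}$ inside the chosen branch, I apply \cref{lem:bounded_weight_moment} twice: once with $\ell = 1 - 1/n$ for the chosen branch and once with $\ell = -1/n$ for the unchosen branch. Both values lie in $[-1,1]$, and the generous choice of $S$ in \cref{lem:bounded_weight_moment} ensures $\gamma \leq 1/S \leq \zeta/2$, so the Taylor bound applies. Expanding and cancelling the constant terms, I obtain
\[
\Ex{\left. e^{\gamma z_b^{t+1}} - e^{\gamma z_b^t} \,\right|\, \mathfrak{F}^t} \leq e^{\gamma z_b^t} \cdot \Bigl( \bigl(P^t(b) - 1/n\bigr) \cdot \gamma + S\gamma^2 \cdot \bigl(P^t(b) + 1/n^2\bigr) \Bigr).
\]
Summing over $b \in [n]$ and using that $\sum_b e^{\gamma z_b^t}$ is invariant under sorting, so that re-indexing by rank gives $\sum_b e^{\gamma z_b^t} \cdot f(P^t(b)) = \sum_i \Phi_i^t \cdot f(p_i^t)$, I arrive at
\[
\Ex{\left. \Delta \Phi^{t+1} \,\right|\, \mathfrak{F}^t} \leq \sum_{i=1}^n \Phi_i^t \cdot \Bigl( (p_i^t - 1/n)\gamma + S\gamma^2 \cdot (p_i^t + 1/n^2) \Bigr).
\]

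Finally, condition \CTwo gives $p_i^t \leq C/n$, and since $C \geq 1$ this implies $p_i^t + 1/n^2 \leq C/n + 1/n \leq 2C/n$, yielding \cref{eq:one}. The bound \cref{eq:two} is obtained by the same computation applied to $e^{-\gamma z_b^{t+1}}$, invoking \cref{lem:bounded_weight_moment} with $\ell = -(1-1/n)$ and $\ell = 1/n$, both of which are in $[-1,1]$. The argument is routine once the conditioning is set up; the only genuine bookkeeping subtlety is ensuring that the two $S\gamma^2$ contributions (from the sampled and unsampled branches) combine cleanly into the single $2CS\gamma^2/n$ term, for which condition \CTwo on the maximum of $p_i^t$ is exactly what is needed.
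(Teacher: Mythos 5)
Your proof is correct and follows essentially the same route as the paper's: condition on which bin receives the ball, apply \cref{lem:bounded_weight_moment} with $\ell = 1-1/n$ and $\ell = -1/n$ (resp.\ their negations for $\Psi$), and absorb the second-order terms via condition \CTwo into $2CS\gamma^2/n$. The only differences are presentational (you condition on the weight before averaging, and you bound $p_i^t + 1/n^2$ rather than $p_i^t + (1-p_i^t)/n^2$), neither of which affects the result.
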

\begin{proof}
Consider an arbitrary bin $i \in [n]$. Then, for the overload potential we have that
\begin{align*}
\Ex{\left. \Phi_i^{t+1} \,\right|\, \mathfrak{F}^t} 
 & = \Phi_i^t \cdot \Ex{\left. e^{\gamma \mathcal{W} (1 - 1/n)} \,\right|\, \mathfrak{F}^t} \cdot p_i^t + \Phi_i^t \cdot \Ex{\left. e^{-\gamma\mathcal{W}/n} \,\right|\, \mathfrak{F}^t} \cdot (1 - p_i^t) \\
 & \stackrel{(a)}{\leq} \Phi_i^t \cdot \left( 1 + \left(1 - \frac{1}{n} \right) \cdot \gamma + S\gamma^2 \right) \cdot p_i^t
 + \Phi_i^t \cdot \left( 1 - \frac{\gamma}{n} + S\frac{\gamma^2}{n^2} \right) \cdot (1 - p_i^t) \\
 & = \Phi_i^t \cdot \left( 1 + \left(p_i^t - \frac{1}{n} \right) \cdot \gamma + S \gamma^2 \cdot p_i^t + S \frac{\gamma^2}{n^2} \cdot (1 - p_i^t) \right) \\
 & \stackrel{(b)}{\leq} \Phi_i^t \cdot \left( 1 + \left(p_i^t - \frac{1}{n} \right) \cdot \gamma + 2CS \cdot \frac{\gamma^2}{n} \right),
\end{align*}
using in $(a)$ \cref{lem:bounded_weight_moment} twice with $\kappa := 1 - \frac{1}{n}$ and with $\kappa := -\frac{1}{n}$ respectively (and that $(1 - 1/n)^2 \leq 1$), and in $(b)$ that $p_i^t \leq \frac{C}{n}$ with $C > 1$ by condition \CTwo. Similarly, for the underloaded potential we have that
\begin{align*}
\Ex{\left. \Psi_i^{t+1} \,\right|\, \mathfrak{F}^t} 
 & = \Psi_i^t \cdot \Ex{\left. e^{-\gamma \mathcal{W} (1 - 1/n)} \,\right|\, \mathfrak{F}^t} \cdot p_i^t + \Psi_i^t \cdot \Ex{\left. e^{\gamma \mathcal{W}/n} \,\right|\, \mathfrak{F}^t} \cdot (1 - p_i^t) \\
 & \stackrel{(a)}{\leq} \Psi_i^t \cdot \left( 1 - \left(1 - \frac{1}{n} \right) \cdot \gamma + S\gamma^2 \right) \cdot p_i^t
 + \Psi_i^t \cdot \left( 1 + \frac{\gamma}{n} + S\frac{\gamma^2}{n^2} \right) \cdot (1 - p_i^t) \\
 & = \Psi_i^t \cdot \left( 1 + \left( \frac{1}{n} - p_i^t \right) \cdot \gamma + S \gamma^2 \cdot p_i^t + S \frac{\gamma^2}{n^2} \cdot (1 - p_i^t) \right) \\
 & \stackrel{(b)}{\leq} \Psi_i^t \cdot \left( 1 + \left(\frac{1}{n} - p_i^t \right) \cdot \gamma + 2CS \cdot \frac{\gamma^2}{n} \right),
\end{align*}
using in $(a)$ \cref{lem:bounded_weight_moment} twice with $\kappa := -(1 - \frac{1}{n})$ and with $\kappa := \frac{1}{n}$ respectively (and that $(1 - 1/n)^2 \leq 1$) and in $(b)$ that $p_i^t \leq \frac{C}{n}$ with $C > 1$ by condition \CTwo. This completes the proof.
\end{proof}

Combining \cref{lem:single_step_change} with \cref{thm:hyperbolic_cosine_expectation}, we obtain:
\begin{cor} \label{cor:like_ptw}
\CorLikePTW
\end{cor}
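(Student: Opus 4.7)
The plan is to obtain this corollary as a direct composition of \cref{lem:single_step_change} with the main drift theorem (\cref{thm:hyperbolic_cosine_expectation}), applied with rounds consisting of single steps and with the probability vector $p$ in the theorem taken to be the process's own probability allocation vector $p^t$.

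First I would invoke \cref{lem:single_step_change}, which requires condition \CTwo (given) and smoothing parameter $\gamma \leq 1/S$. The latter is automatic since our hypothesis gives $\gamma \leq \frac{\eps\delta}{16CS} \leq \frac{1}{16CS} \leq \frac{1}{S}$, using $C \geq 1$, $\eps \leq 1$, $\delta \leq 1$. The lemma then yields the two per-step bounds
\[
\Ex{\left.\Delta\Phi^{t+1} \,\right|\, \mathfrak{F}^t} \leq \sum_{i=1}^n \Phi_i^t \cdot \left(\left(p_i^t - \tfrac{1}{n}\right)\gamma + 2CS \cdot \tfrac{\gamma^2}{n}\right),
\]
and the symmetric one for $\Psi$. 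These are exactly the preconditions of \cref{thm:hyperbolic_cosine_expectation} with $R := 1$ and $K := 2CS$.

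Next I would verify that the range of $\gamma$ required by the main theorem, namely $\gamma \in (0, \min\{1, \tfrac{\eps\delta}{8K}\}]$, is consistent with our hypothesis: substituting $K = 2CS$ gives $\tfrac{\eps\delta}{8K} = \tfrac{\eps\delta}{16CS}$, so the condition reduces to $\gamma \leq \tfrac{\eps\delta}{16CS}$ (and $\gamma \leq 1$, which is automatic as above). Condition \COne on $p^t$ holds by assumption with the stated $\delta$ and $\eps$, so \cref{thm:hyperbolic_cosine_expectation} applies and directly delivers both conclusions:
\[
\Ex{\left.\Delta\Gamma^{t+1} \,\right|\, \mathfrak{F}^t} \leq -\Gamma^t \cdot \tfrac{\gamma\eps\delta}{8n} + c\gamma\eps, \qquad \Ex{\Gamma^t} \leq \tfrac{8c}{\delta}\cdot n,
\]
for the constant $c := c(\delta)$ supplied by the theorem.

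There is essentially no obstacle here beyond bookkeeping: the corollary is a plug-and-play specialization of the abstract drift theorem to the single-step weighted setting, with \cref{lem:single_step_change} providing exactly the form of per-bin bound the theorem asks for. The only point requiring care is the arithmetic compatibility of the $\gamma$ window (which forced the factor $16$ rather than $8$ in the denominator), and that the constant $C > 1$ from \CTwo enters $K$ linearly so that the admissible $\gamma$ shrinks proportionally.
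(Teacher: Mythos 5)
Your proof is correct and follows exactly the paper's own route: apply \cref{lem:single_step_change} to obtain the preconditions of \cref{thm:hyperbolic_cosine_expectation} with $R := 1$ and $K := 2CS$, and check that $\gamma \leq \frac{\eps\delta}{16CS}$ fits the required window. Your additional verification that $\gamma \leq 1/S$ (needed for the lemma) is a detail the paper leaves implicit, and it is handled correctly.
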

\begin{proof}
The result follows by \cref{thm:hyperbolic_cosine_expectation}, the preconditions of which are satisfied by \cref{lem:single_step_change} with $K := 2CS$ and $R := 1$, for $\Gamma := \Gamma(\gamma)$ with any smoothing parameter $\gamma \leq \frac{\eps\delta}{8K} = \frac{\eps\delta}{16CS}$.
\end{proof}

\begin{rem} \label{rem:tie_breaking}
The same upper bound in \cref{cor:like_ptw} also holds for allocation processes with a probability allocation vector $p^t$ satisfying the preconditions of \cref{lem:single_step_change} and using random tie-breaking. The reason for this is that $(i)$ averaging probabilities in \cref{eq:averaging_pi} can only reduce the maximum entry in the allocation vector $\tilde{q}^t$, i.e. $\max_{i \in [n]} \tilde{q}_i^t(x^t) \leq \max_{i \in [n]} q_i^t$, so it still satisfies $\mathcal{C}_2$ and $(ii)$~moving probability between bins $i$, $j$ with $x_i^t = x_j^t$ (and thus $\Phi_i^t = \Phi_j^t$ and $\Psi_i^t = \Psi_j^t$), implies that the aggregate upper bounds in \eqref{eq:one} and \eqref{eq:two} remain the same. %
\end{rem}

We now proceed with two lemmas that are useful for processes that allocate to more than one bin per round.

\begin{lem} \label{lem:z_i_t_bounded}
Consider any allocation process and let $Z_i^{t+1}:= y_i^{t+1}-y_i^{t}$ be the change of the normalized load of bin $i \in [n]$ in round $t$. Further, assume that for the allocation process there exists some $\gamma \in (0, 1]$ such that the following holds deterministically \[\gamma \cdot \sup_{t \geq 0} \max_{i\in [n]} |Z_i^{t+1}| \leq 1.\] Then, the overload potential $\Phi := \Phi(\gamma)$ satisfies for any bin $i \in [n]$
\[
\Ex{\left. \Phi_i^{t+1} \,\right|\, \mathfrak{F}^t}
 \leq \Phi_i^t \cdot \left( 1 + \Ex{Z_i^{t+1}} \cdot \gamma + \Ex{(Z_i^{t+1})^2} \cdot \gamma^2 \right),
\]
and the underloaded potential $\Psi := \Psi(\gamma)$ satisfies for any bin $i \in [n]$
\[
\Ex{\left. \Psi_i^{t+1} \,\right|\, \mathfrak{F}^t}
 \leq \Psi_i^t \cdot \left( 1 - \Ex{Z_i^{t+1}} \cdot \gamma + \Ex{(Z_i^{t+1})^2} \cdot \gamma^2 \right).
\]
\end{lem}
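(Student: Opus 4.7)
The plan is to reduce the claim to the elementary single-variable inequality $e^{z} \le 1 + z + z^{2}$, valid for all $|z|\le 1$, applied pointwise to the random variable $z = \gamma Z_i^{t+1}$ (respectively $z = -\gamma Z_i^{t+1}$). The deterministic boundedness assumption $\gamma \cdot \sup_{t\ge 0}\max_{i\in[n]}|Z_i^{t+1}|\le 1$ is exactly what guarantees that this inequality applies without any conditioning caveat.

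First I would factor out the load at time $t$: since $y_i^{t+1}=y_i^{t}+Z_i^{t+1}$, we immediately get
\[
\Phi_i^{t+1} \;=\; e^{\gamma y_i^{t+1}} \;=\; \Phi_i^{t}\cdot e^{\gamma Z_i^{t+1}},\qquad \Psi_i^{t+1} \;=\; \Psi_i^{t}\cdot e^{-\gamma Z_i^{t+1}}.
\]
Note that $\Phi_i^{t}$ and $\Psi_i^{t}$ are $\mathfrak{F}^t$-measurable, so they can be pulled out of the conditional expectation. The goal then becomes to bound $\Ex{e^{\pm \gamma Z_i^{t+1}}\mid \mathfrak{F}^t}$ by $1 \pm \gamma\Ex{Z_i^{t+1}\mid\mathfrak{F}^t} + \gamma^2\Ex{(Z_i^{t+1})^2\mid\mathfrak{F}^t}$.

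Next I would invoke the pointwise inequality $e^{z}\le 1+z+z^{2}$ for $|z|\le 1$ (this is a standard consequence of the Taylor expansion of $e^z$ with remainder, since the remaining terms $\sum_{k\ge 3} z^k/k!$ are dominated by $z^2/2 \cdot \sum_{k\ge 1} (|z|/3)^k \le z^2$ when $|z|\le 1$; one can verify it directly by checking that $f(z):= 1+z+z^{2}-e^{z}$ is nonnegative on $[-1,1]$ via its derivatives and boundary values). Applying this with $z:=\gamma Z_i^{t+1}$, which by hypothesis satisfies $|z|\le 1$ almost surely, and taking conditional expectation gives
\[
\Ex{e^{\gamma Z_i^{t+1}} \,\middle|\, \mathfrak{F}^t} \;\le\; 1 + \gamma\cdot\Ex{Z_i^{t+1}\,\middle|\,\mathfrak{F}^t} + \gamma^{2}\cdot\Ex{(Z_i^{t+1})^{2}\,\middle|\,\mathfrak{F}^t}.
\]
Multiplying by $\Phi_i^{t}$ yields the first inequality. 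The second follows identically, applying the same pointwise bound with $z:=-\gamma Z_i^{t+1}$ (which also satisfies $|z|\le 1$) and multiplying by $\Psi_i^{t}$; the linear term flips sign while the quadratic term is unchanged.

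I do not anticipate a serious obstacle here: the entire argument is the pointwise Taylor bound $e^z\le 1+z+z^{2}$ on $[-1,1]$ combined with linearity of (conditional) expectation, and the deterministic a.s.~bound on $\gamma Z_i^{t+1}$ is tailored precisely to let this inequality apply without further assumptions on the distribution of $Z_i^{t+1}$. The only mildly delicate point is making sure that the boundedness hypothesis is used in the form ``almost surely'' rather than ``in expectation''; this is why the lemma is stated with a deterministic supremum.
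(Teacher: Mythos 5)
Your proposal is correct and matches the paper's argument: the paper factors $\Phi_i^{t+1}=\Phi_i^t\cdot e^{\gamma Z_i^{t+1}}$ (via an intermediate MGF lemma, \cref{lem:z_i_t_mgf}) and then applies the same Taylor estimate $e^v\le 1+v+v^2$ for $|v|\le 1$ to $v=\pm\gamma Z_i^{t+1}$, using the deterministic bound exactly as you do. The only cosmetic difference is that you merge the two steps into one direct computation rather than routing through the separate MGF-condition lemma.
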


We start by proving a more general version where the uniform bound on $Z$ is replaced with a condition on the MGF of $Z$.

\begin{lem} \label{lem:z_i_t_mgf}
Consider any allocation process and let $Z_i^{t+1}$ be the change of the normalized load of bin $i \in [n]$ in round $t+1$. Further, assume that there exists $\gamma \in (0, 1]$, $\ell_1, \ell_2 > 0$ such that for any round $t \geq 0$,
\[
\Ex{e^{\pm \gamma Z_i^{t+1}}} \leq 1 \pm  \ell_1 \cdot \gamma + \ell_2 \cdot \gamma^2.
\]
Then, the overload potential $\Phi := \Phi(\gamma)$ satisfies for any bin $i \in [n]$
\[
\Ex{\left. \Phi_i^{t+1} \,\right|\, \mathfrak{F}^t}
 \leq \Phi_i^t \cdot \left( 1 + \ell_1 \cdot \gamma + \ell_2 \cdot \gamma^2 \right),
\]
and the underloaded potential $\Psi := \Psi(\gamma)$ satisfies for any bin $i \in [n]$
\[
\Ex{\left. \Psi_i^{t+1} \,\right|\, \mathfrak{F}^t}
 \leq \Psi_i^t \cdot \left( 1 - \ell_1 \cdot \gamma + \ell_2 \cdot \gamma^2 \right).
\]
\end{lem}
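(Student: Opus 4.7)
The proof is a one-line unfolding of definitions, with the only nontrivial step being an invocation of the assumed moment generating function bound. The plan is to note that $y_i^t$ is $\mathfrak{F}^t$-measurable, so the time-$t$ potential factors out of the conditional expectation, and what remains is exactly the MGF of $Z_i^{t+1}$ evaluated at $\pm\gamma$, with the sign choice matching the potential being analyzed.

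For the overload potential, since $y_i^{t+1} = y_i^t + Z_i^{t+1}$, we have $\Phi_i^{t+1} = e^{\gamma y_i^{t+1}} = \Phi_i^t \cdot e^{\gamma Z_i^{t+1}}$, so
\[
\Ex{\left. \Phi_i^{t+1} \,\right|\, \mathfrak{F}^t} = \Phi_i^t \cdot \Ex{\left. e^{\gamma Z_i^{t+1}} \,\right|\, \mathfrak{F}^t} \leq \Phi_i^t \cdot \bigl( 1 + \ell_1 \gamma + \ell_2 \gamma^2 \bigr),
\]
by the ``$+$'' instance of the hypothesis. For the underload potential, the analogous identity is $\Psi_i^{t+1} = \Psi_i^t \cdot e^{-\gamma Z_i^{t+1}}$, and applying the ``$-$'' instance of the hypothesis yields the matching bound. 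Both inequalities should be at most a couple of lines each.

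No real obstacle is anticipated; this lemma is best viewed as a clean MGF repackaging of \cref{lem:z_i_t_bounded}. Indeed, when the uniform bound $\gamma \cdot \sup_{t,i} |Z_i^{t+1}| \leq 1$ holds deterministically, the elementary inequality $e^x \leq 1 + x + x^2$ for $|x| \leq 1$ gives $\Ex{\left. e^{\pm \gamma Z_i^{t+1}} \,\right|\, \mathfrak{F}^t} \leq 1 \pm \gamma \cdot \Ex{\left. Z_i^{t+1} \,\right|\, \mathfrak{F}^t} + \gamma^2 \cdot \Ex{\left. (Z_i^{t+1})^2 \,\right|\, \mathfrak{F}^t}$, so that \cref{lem:z_i_t_bounded} will drop out as a corollary of \cref{lem:z_i_t_mgf}. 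The MGF formulation here is more flexible because it also handles settings where $Z_i^{t+1}$ is unbounded but has well-controlled exponential moments, which is precisely the scenario needed when feeding the preconditions of \cref{thm:hyperbolic_cosine_expectation} from weighted or batched allocation settings.
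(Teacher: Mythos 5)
Your proof is correct and matches the paper's argument exactly: factor out the $\mathfrak{F}^t$-measurable term $\Phi_i^t = e^{\gamma y_i^t}$ (respectively $\Psi_i^t$) from the conditional expectation and apply the assumed MGF bound with the matching sign. Your closing observation that \cref{lem:z_i_t_bounded} then follows via the Taylor estimate $e^v \leq 1+v+v^2$ is also precisely how the paper derives that lemma.
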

\begin{proof}[Proof of \cref{lem:z_i_t_mgf}]
We start with upper bounding the expected value of the overload potential $\Phi := \Phi(\gamma)$ over one round for any bin $i \in [n]$,
\begin{align*}
\Ex{\left. \Phi_i^{t+1} \,\right|\, \mathfrak{F}^t}
  & = \Ex{e^{\gamma (y_i^t + Z_i^{t+1})}} = e^{\gamma y_i^t} \cdot \Ex{e^{\gamma Z_i^{t+1}}} = \Phi_i^t \cdot \Ex{e^{\gamma Z_i^{t+1}}} \\
   & \leq \Phi_i^t \cdot \left(1 + \ell_1 \cdot \gamma + \ell_2 \cdot \gamma^2 \right).
\end{align*}
Similarly, for the underloaded potential $\Psi := \Psi(\gamma)$
\begin{align*}
\Ex{\left. \Psi_i^{t+1} \,\right|\, \mathfrak{F}^t}
  & = \Ex{e^{\gamma (-y_i^t - Z_i^{t+1})}} = e^{-\gamma y_i^t} \cdot \Ex{e^{-\gamma Z_i^{t+1}}} = \Psi_i^t \cdot \Ex{e^{-\gamma Z_i^{t+1}}} \\
  & \leq \Psi_i^t \cdot \left(1 - \ell_1 \cdot \gamma + \ell_2 \cdot \gamma^2 \right).
\end{align*}
This concludes the claim.
\end{proof}

We now return to the proof of \cref{lem:z_i_t_bounded}.
\begin{proof}[Proof of \cref{lem:z_i_t_bounded}]
Since $\gamma \cdot \max_{t \geq 0} \max_{i\in [n]} |Z_i^{t+1}| \leq 1$, using the Taylor estimate $e^v \leq 1 + v + v^2$ for any $|v| \leq 1$, we have that 
\[
  \Ex{e^{\gamma Z_i^{t+1}}} 
    \leq \Ex{1 + \gamma Z_i^{t+1} + (\gamma Z_i^{t+1})^2}
    = 1 + \Ex{Z_i^{t+1}} \cdot \gamma + \Ex{(Z_i^{t+1})^2} \cdot \gamma^2,
\]
and 
\[
  \Ex{e^{-\gamma Z_i^{t+1}}} 
    \leq \Ex{1 - \gamma Z_i^{t+1} + (\gamma Z_i^{t+1})^2}
    = 1 - \Ex{Z_i^{t+1}} \cdot \gamma + \Ex{(Z_i^{t+1})^2} \cdot \gamma^2.
\]
Finally, by \cref{lem:z_i_t_mgf} with $\ell_1 := \Ex{Z_i^{t+1}}$ and $\ell_2 := \Ex{(Z_i^{t+1})^2}$, we get the conclusion.
\end{proof}

\section{Applications} \label{sec:applications}

In this section, we prove an upper bound on the difference between the maximum and minimum load for a variety of processes. We primarily make use of \cref{thm:hyperbolic_cosine_expectation}, verifying its preconditions using the three auxiliary lemmas of \cref{sec:three_useful_lemmas}.

\subsection{The \texorpdfstring{\OnePlusBeta}{(1+β)}-process}
\label{sec:refined_expectation_one_plus_beta}
In this section, we improve the upper bound on the gap for the \OnePlusBeta-process from previous work for very small $\beta$ in the unit weights setting. In~\cite[Corollary 2.12]{PTW15}, it was shown that this gap is $\Oh(\log n/\beta + \log(1/\beta)/\beta)$. For $\beta = n^{-\omega(1)}$, the second term dominates. We improve this gap bound to $\Oh(\log n/\beta)$, which by \cite[Section 4]{PTW15} is tight for any $\beta$ being bounded away from $1$ (\cref{rem:one_plus_beta_lower_bound}).

\begin{thm}\label{thm:one_plus_beta_upper_bound}
Consider the \OnePlusBeta-process for any $\beta \in (0,1]$ in the \Weighted setting with weights from a $\FiniteMgf(S)$ distribution with $S \geq 1$. Then, there exists a constant $\kappa > 0$, such that for any step $m \geq 0$,
\[
\Pro{\max_{i \in [n]} \left| y_i^m \right| \leq \kappa \cdot S \cdot \frac{\log n}{\beta} } \geq 1 - n^{-2}.
\]
\end{thm}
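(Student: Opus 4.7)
The plan is to reduce the statement to Corollary~\ref{cor:like_ptw} (the restated-later version of Theorem~\ref{thm:hyperbolic_cosine_expectation} in the sequential weighted setting) and then extract a high-probability bound on $\max_i |y_i^m|$ from the bound on $\Ex{\Gamma^m}$ via Markov's inequality.

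First, I would invoke Proposition~\ref{pro:one_plus_beta}: the probability allocation vector of the \OnePlusBeta-process is time-homogeneous and satisfies condition \COne with $\delta := \tfrac{1}{4}$ and $\eps := \tfrac{\beta}{2}$, and condition \CTwo with $C := 2$. Applying Corollary~\ref{cor:like_ptw} in the $\FiniteMgf(S)$ setting, I would choose the smoothing parameter at the top of the admissible range, namely $\gamma := \tfrac{\eps \delta}{16 C S} = \tfrac{\beta}{256 S}$, which lies in $(0,1]$ since $\beta \le 1$ and $S \ge 1$. The corollary then yields a constant $c>0$ depending only on $\delta = \tfrac{1}{4}$ such that
\[
\Ex{\Gamma^m} \;\le\; \frac{8c}{\delta} \cdot n \;=\; 32c\cdot n.
\]

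Next I would apply Markov's inequality at the threshold $32c\cdot n^{3}$ to obtain
\[
\Pro{\Gamma^m \;\ge\; 32c\cdot n^{3}} \;\le\; n^{-2}.
\]
On the complementary event, $e^{\gamma y_1^m}\le \Phi^m\le \Gamma^m < 32c\cdot n^{3}$ implies $y_1^m \le \gamma^{-1}\bigl(\log(32c)+3\log n\bigr)$, and symmetrically $e^{-\gamma y_n^m}\le \Psi^m \le \Gamma^m$ implies $-y_n^m \le \gamma^{-1}\bigl(\log(32c)+3\log n\bigr)$. Since $y^m$ is sorted non-increasingly, $\max_{i\in[n]}|y_i^m| = \max\{y_1^m,-y_n^m\}$, and for $n$ sufficiently large the bound becomes
\[
\max_{i\in[n]}|y_i^m| \;\le\; \frac{4\log n}{\gamma} \;=\; \frac{1024\,S\log n}{\beta},
\]
so $\kappa := 1024$ (or any larger absolute constant absorbing $\log(32c)$) works.

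There is essentially no serious obstacle given the framework already set up: Proposition~\ref{pro:one_plus_beta} provides the preconditions and Corollary~\ref{cor:like_ptw} does the heavy lifting. The only mild subtlety is bookkeeping, ensuring $\gamma = \Theta(\beta/S)$ lies inside the admissible range $(0,\tfrac{\eps\delta}{16CS}]$ for all $\beta\in(0,1]$ simultaneously, and that the Markov threshold is polynomial enough in $n$ to absorb the $\log(32c)$ additive constant while still yielding $1 - n^{-2}$. Both are routine.
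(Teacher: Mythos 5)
Your proposal is correct and follows essentially the same route as the paper's proof: verifying conditions \COne and \CTwo via \cref{pro:one_plus_beta}, applying \cref{cor:like_ptw} with $\gamma = \frac{\eps\delta}{16CS} = \frac{\beta}{256S}$, and using Markov's inequality at the threshold $\frac{8c}{\delta}\cdot n^3$ to extract the gap bound. Your additional explicit handling of the underload side via $\Psi^m$ (to cover $\max_i |y_i^m|$ rather than only $y_1^m$) is a correct and welcome piece of bookkeeping that the paper leaves implicit.
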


\begin{proof}
By \cref{pro:one_plus_beta}, the \OnePlusBeta-process satisfies conditions \COne for $\eps := \frac{\beta}{2}$ and $\delta := \frac{1}{4}$ and \CTwo for $C := 2$. Hence, by \cref{cor:like_ptw}, there exists a constant $c := c(\delta) > 0$, such that for $\Gamma := \Gamma(\gamma)$ with $\gamma := \frac{\eps\delta}{16CS} = \frac{\beta}{256S}$, for any step $m \geq 0$,
\[
\Ex{\Gamma^m} \leq \frac{8c}{\delta} \cdot n.
\]
Hence, using Markov's inequality
\[
\Pro{\Gamma^m \leq \frac{8c}{\delta} \cdot n^3} \geq 1 - n^{-2}.
\]
When the event $\big\{ \Gamma^m \leq \frac{8c}{\delta} \cdot n^3 \big\}$ holds, we deduce the desired bound on the gap \[
\Gap(m) \leq \frac{1}{\gamma} \cdot \left( \log \left( \frac{8c}{\delta} \right) + 3 \cdot \log n \right) \leq \frac{4}{\gamma} \cdot \log n =  4 \cdot \frac{256S}{\beta} \cdot \log n = \Oh\left(S \cdot \frac{\log n}{\beta}\right). \qedhere
\]
\end{proof}

\begin{rem} \label{rem:one_plus_beta_lower_bound}
For the unit weights setting, this upper bound is tight up to multiplicative constants for any $\beta \leq 1 - \xi$ (for any constant $\xi \in (0, 1)$), due to a lower bound of $\Omega( \log n/\beta )$ shown in~\cite[Section~4]{PTW15} which holds with probability at least $1 - o(1)$.
\end{rem}

\begin{rem}
More generally, the fact that $\Gamma := \Gamma(\gamma)$ with $\gamma = \Theta(\beta)$ is $\Oh(n)$ in expectation, allows us to immediately deduce that with constant probability the number of bins with normalized load at least $+ z$ (or load at most $-z$) is $\Oh(n \cdot e^{-\Omega(\beta z)})$. These bounds are tighter than the ones attainable using the drift theorem in \cite{PTW15} and by \cref{lem:lower_bound_overload_height,lem:lower_bound_underload_height}, there is also a matching lower bound of $\Omega(n \cdot e^{-\Oh(\beta z)})$ for sufficiently large $m$.
\end{rem}

\subsection{Graphical Balanced Allocation with Weights} \label{sec:weighted_graphical}

In~\cite{PTW15}, the authors proved bounds on the gap for the \OnePlusBeta-process in the sequential setting where balls are sampled from a $\FiniteMgf(\zeta)$ distribution with constant $\zeta > 0$. Then, they used a majorization argument to deduce gap bounds for \TwoChoice in the \Graphical setting. However, due to the involved majorization argument not working for weights, all results for graphical allocations in~\cite{PTW15} assume balls have unit weights. This lack of results for weighted graphical allocations is summarized as~\cite[Open Question 1]{PTW15}:
\begin{quote}
\textit{Graphical processes in the weighted case. The analysis of section 3.1 goes through the majorization approach and therefore applies only to the unweighted case. It would be interesting to analyze such processes in the weighted case as well.}
\end{quote}
By leveraging the results in previous sections, we are able to address this open question.

For a $d$-regular (and connected) graph $G$, let us define the \textit{conductance} as:
\[
 \phi(G) := \min_{S \subseteq V \colon 1 \leq |S| \leq n/2} \frac{|E(S,V \setminus S)|}{|S| \cdot d},
\]
where $|E(X,Y)|$ counts (once) the edges between any two subsets $X$ and $Y$.

\begin{lem}\label{lem:expansion}
Consider \Graphical on a $d$-regular graph with conductance $\phi$. Then, in any step $t \geq 0$, the probability allocation vector $p^t$  satisfies for all $1 \leq k \leq n/2$,
\[
 \sum_{i=1}^k p_i^t \leq (1-\phi) \cdot \frac{k}{n},
\]
and similarly, for all $n/2+1 \leq k \leq n$,
\[
 \sum_{i=k}^{n} p_i^t \geq (1+\phi) \cdot \frac{n-k+1}{n}.
\]
Further, $\max_{i \in [n]} p_i^t \leq  \frac{2}{n}$. Thus, the vector $p^t$ satisfies condition \COne with $\delta := 1/2$, $\epsilon := \phi$ and condition \CTwo with $C := 2$.
\end{lem}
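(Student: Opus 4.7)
The plan is to view the three bounds through the cut structure in $G$ and argue that, under our tie-breaking convention, every edge crossing the cut between the top-$k$ and bottom-$(n-k)$ bins is resolved in favor of the lower-ranked side.

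First I would fix a step $t$ and let $S := \{i \in [n] : \Rank^t(i) \leq k\}$, so $|S| = k$. The key structural claim is that for every edge $e = \{u,v\} \in E(S, V \setminus S)$ the ball is placed in the endpoint lying in $V \setminus S$. Indeed, $\Rank^t(u) < \Rank^t(v)$ forces $x_u^t \geq x_v^t$; if strict the process picks $v$ by definition, and if equal our convention that ``higher-indexed bins get higher rank number'' forces $v$ to have the larger index, which is precisely the tie-breaking direction. Thus the only edges contributing to $\sum_{i=1}^k p_i^t$ are those in $E(S,S)$, i.e.
\[
 \sum_{i=1}^k p_i^t = \frac{|E(S,S)|}{|E|}.
\]
By the handshake identity, $|S| \cdot d = 2|E(S,S)| + |E(S, V \setminus S)|$, and $|E| = nd/2$, so
\[
\sum_{i=1}^k p_i^t = \frac{|S|}{n} - \frac{|E(S, V \setminus S)|}{nd}.
\]
For $1 \leq k \leq n/2$, the conductance bound gives $|E(S, V \setminus S)| \geq \phi \cdot |S| \cdot d$, yielding the desired prefix inequality $\sum_{i=1}^k p_i^t \leq (1-\phi) \cdot k/n$.

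For the suffix bound I would set $T := \{i \in [n] : \Rank^t(i) \geq k\}$, with $|T| = n - k + 1 \leq n/2$ when $k \geq n/2 + 1$. By the symmetric tie-breaking argument, for every edge $\{u,v\} \in E(T, V \setminus T)$ the ball is placed in the endpoint in $T$, so
\[
 \sum_{i=k}^n p_i^t = \frac{|E(T,T)| + |E(T, V \setminus T)|}{|E|} = \frac{|T|}{n} + \frac{|E(T, V \setminus T)|}{nd} \geq (1+\phi) \cdot \frac{n-k+1}{n},
\]
again by conductance applied to $T$. The bound $\max_{i \in [n]} p_i^t \leq 2/n$ is immediate, since for any bin $j$ the event ``the ball is placed in $j$'' requires sampling one of the $d$ edges incident to $j$, which has probability $d/|E| = 2/n$.

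Finally, with $\delta := 1/2$ one has $\delta/(1-\delta) = 1$, so the two displayed inequalities are exactly the statements of condition \COne with $\epsilon := \phi$, while the max-entry bound is condition \CTwo with $C := 2$. The only subtlety worth stating carefully is the tie-breaking analysis at the cut; once the ordering conventions on $\Rank^t$ and on the process are aligned as above, every cut edge is resolved monotonically and the combinatorial identity turns directly into the conductance estimate.
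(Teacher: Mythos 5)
Your proposal is correct and follows essentially the same route as the paper: count edges internal to the top-$k$ (resp.\ bottom-$(n-k+1)$) set, use the handshake identity and the conductance of the smaller side of the cut, and bound each $p_i^t$ by $d/|E|=2/n$. The only cosmetic differences are that you argue the suffix bound directly on the set $T$ rather than via one minus the prefix sum, and that you spell out the tie-breaking alignment between $\Rank^t$ and the allocation rule slightly more explicitly than the paper does.
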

The proof of this lemma closely follows~\cite[Proof of Theorem~3.2]{PTW15}.

\begin{proof}
Fix any load vector $x^t$ in step $t$. Consider any $1 \leq k \leq n/2$. Let $S_k$ be the set of the $k$ bins with the largest load. Hence in order to allocate a ball into $S_k$, both endpoints of the sampled edge must be in $S_k$, and so
\begin{align*}
 \sum_{i=1}^k p_i^t &= \frac{2 \cdot |E(S_k,S_k)|}{2 \cdot |E|} 
 \leq \frac{d \cdot k  - \phi \cdot d \cdot k }{d \cdot n}
 = (1-\phi) \cdot \frac{k}{n},
\end{align*}
where the inequality used that $d \cdot |S_k| = |E(S_k, V \setminus S_k)| + 2 \cdot |E(S_k,S_k)|$ and the definition of the conductance $\phi$.
Now, we will consider the suffix sums for $n/2+1 \leq k \leq n$. We start by upper bounding the prefix sum up to $k-1$,
\begin{align*}
\sum_{i=1}^{k-1} p_i^t %
&\leq \frac{d \cdot (k-1) - \phi \cdot d \cdot (n - k + 1)}{d \cdot n} = \frac{(k-1) - \phi \cdot (n - k + 1)}{n},
\end{align*}
where we proceeded as in the bound above and used the definition of $\varphi$.
Therefore, we lower bound the suffix sum by
\[
\sum_{i = k}^n p_i^t = 1 - \sum_{i=1}^{k-1} p_i^t \geq 1 - \frac{(k-1) - \phi \cdot (n - k + 1)}{n} = (1+\phi) \cdot \frac{n - k + 1}{n}.
\]
Hence, $p^t$ satisfies condition \COne with $\epsilon := \phi$. 
Finally, we also know that $
p_i^{t} \leq \frac{d}{d \cdot n/2} = \frac{2}{n},
$
for any bin $i \in [n]$, since in the worst-case we allocate a ball to bin $i$ whenever one of its $d$ incident edges are chosen.
\end{proof}

Next we state our result for the \Graphical setting with weights.
\begin{thm}\label{thm:weighted_graphical}
Consider \Graphical on a $d$-regular graph with conductance $\phi > 0$. Further, assume that weights are sampled from a $\FiniteMgf(S)$ distribution with $S \geq 1$. Then, there exists a constant $\kappa > 0$ such that for any step $m \geq 0$, 
\[
\Pro{\max_{i \in [n]} |y_i^m| \leq \kappa \cdot S \cdot \frac{\log n}{\phi} } \geq 1-n^{-2}.
\]
\end{thm}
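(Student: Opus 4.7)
The plan is to combine \cref{lem:expansion} with the weighted drift corollary \cref{cor:like_ptw}, mirroring the proof template of \cref{thm:one_plus_beta_upper_bound}. By \cref{lem:expansion}, for any $d$-regular graph $G$ with conductance $\phi$, the probability allocation vector $p^t$ of $\Graphical(G)$ satisfies condition \COne with $\delta := 1/2$ and $\eps := \phi$, and condition \CTwo with $C := 2$, at every step $t \geq 0$. These are exactly the preconditions required by \cref{cor:like_ptw} in the \Weighted setting with $\FiniteMgf(S)$ weights.

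Applying \cref{cor:like_ptw} with the permissible choice $\gamma := \frac{\eps\delta}{16CS} = \frac{\phi}{64S}$ then produces a constant $c = c(\delta) > 0$, depending only on $\delta = 1/2$, such that $\Ex{\Gamma^m} \leq \frac{8c}{\delta}\cdot n = 16c\cdot n$ for every step $m \geq 0$. By Markov's inequality,
\[
\Pro{\Gamma^m \leq 16c\cdot n^3} \geq 1 - n^{-2}.
\]
On this event, since $\Gamma^m \geq e^{\gamma|y_i^m|}$ holds for every $i \in [n]$ (because either $e^{\gamma y_i^m}$ or $e^{-\gamma y_i^m}$ equals $e^{\gamma|y_i^m|}$), I deduce
\[
\max_{i \in [n]} |y_i^m| \leq \frac{\log(16c\cdot n^3)}{\gamma} \leq \frac{4\log n}{\gamma} = 256\cdot S\cdot \frac{\log n}{\phi}
\]
for $n$ sufficiently large, and setting $\kappa$ to an appropriate absolute constant completes the argument.

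The crux of the proof is really just \cref{lem:expansion}, which reduces graphical balanced allocation to the abstract conditions \COne and \CTwo; everything else is discharged mechanically by the improved drift theorem. Because the conductance enters only via $\eps := \phi$ in \COne and we no longer rely on majorization, the argument sidesteps precisely the obstruction in \cite{PTW15} that restricted their weighted analysis, and the introduction of weights is absorbed entirely by the factor $S$ in $\gamma$. I do not anticipate any genuine technical obstacle beyond checking that the constants $c$, $\delta$, $\eps$, $C$, $S$ propagate correctly through the chain of results and that $\gamma \leq 1$, which is automatic since $\phi \leq 1$ and $S \geq 1$.
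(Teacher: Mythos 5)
Your proposal is correct and follows essentially the same route as the paper: verify conditions \COne and \CTwo via \cref{lem:expansion}, invoke \cref{cor:like_ptw} with $\gamma = \frac{\phi}{64S}$, apply Markov's inequality, and convert the bound on $\Gamma^m$ into a bound on $\max_{i\in[n]}|y_i^m|$. The constants and the final inequality match the paper's argument up to bookkeeping.
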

Note that if $G$ is a (family) of $d$-regular expanders, then $\phi$ is bounded below by a constant $>0$, and therefore we obtain a gap bound of $\Oh(\log n)$. Further, if $G$ is a complete graph, then \cref{thm:one_plus_beta_upper_bound} implies a gap bound of $\Oh(\log n)$. Both gap bounds are asymptotically tight for many weight distributions including the exponential distribution, see \cref{rem:weighted_lower_bound}.
\begin{proof}
By \cref{lem:expansion}, $p^t$ satisfies \COne  with $\eps := \phi$, $\delta := \frac{1}{2}$ and \CTwo with $C := 2$ in every step $t \geq 0$. So, by \cref{cor:like_ptw}, we have that for the for the potential $\Gamma := \Gamma(\gamma)$ with $\gamma := \frac{\eps \delta}{16CS} = \frac{\phi}{64S}$ and any step $m \geq 0$, 
\[
\Ex{\Gamma^m} \leq \frac{8c}{\delta} \cdot n,
\]
for some constant $c := c(\delta) > 0$. Hence, by Markov's inequality
\[
\Pro{\Gamma^m \leq \frac{8c}{\delta} \cdot n^3} \geq 1 - n^{-2}.
\]
The event $\{ \Gamma^m \leq \frac{8c}{\delta} \cdot n^3 \}$ implies that
\[
\max_{i \in [n]} |y_i^m| \leq \log \left(\frac{8c}{\delta} \right) + 3 \cdot \frac{64S}{\phi} \cdot \log n = \Oh\left( S \cdot \frac{\log n}{\phi} \right). \qedhere
\]
\end{proof}

\subsection{The \texorpdfstring{$\Quantile(\delta)$}{Quantile(δ)} process}

Now, we turn our attention to the $\Quantile(\delta)$ process. Recall that the $\Quantile(\delta)$ process has a time invariant probability allocation vector $p \in \R^n$ given by
\[
p_i = \begin{cases}
\frac{\delta}{n} & \text{for }i \leq n \cdot \delta, \\
\frac{1 + \delta}{n} & \text{otherwise}.
\end{cases}
\]

\begin{thm} \label{thm:quantile_upper_bound}
Consider the $\Quantile(\delta)$ process with any quantile $\delta \in (0, 1)$ in the \Weighted setting with weights from a $\FiniteMgf(S)$ distribution with $S \geq 1$. If $\delta \leq \frac{1}{2}$, there exists a constant $\kappa > 0$, such that for any step $m \geq 0$,\[
\Pro{\max_{i \in [n]} \left| y_i^m \right| \leq \kappa \cdot S \cdot \frac{\log n}{\delta}} \geq 1 - n^{-2}.
\]
If $\delta > \frac{1}{2}$, there exists a constant $\kappa > 0$, such that for any step $m \geq 0$,\[
\Pro{\max_{i \in [n]} \left| y_i^m \right| \leq \kappa \cdot S \cdot \frac{\log n}{1 - \delta}} \geq 1 - n^{-2}.
\]
\end{thm}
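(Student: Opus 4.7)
The plan is to apply \cref{cor:like_ptw} after verifying conditions \COne and \CTwo for the probability allocation vector $p$ of $\Quantile(\delta)$, and then convert the expectation bound into a high-probability gap bound via Markov's inequality, exactly as in the proofs of \cref{thm:one_plus_beta_upper_bound} and \cref{thm:weighted_graphical}. Because \cref{cor:like_ptw} requires the quantile appearing in \COne to be a constant while the process parameter $\delta$ is allowed to be non-constant, I split the analysis into $\delta \leq 1/2$ and $\delta > 1/2$ and use a different constant split in each range.

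For $\delta > 1/2$, I would take the split $\delta' := 1/2$ with $\epsilon' := 1-\delta$. Since $n/2 \leq n\delta$, the bin at rank $n/2$ has probability $\delta/n = (1-(1-\delta))/n$, so \DOne holds with these parameters. Combined with the trivial monotonicity \DZero, \cref{pro:d0_and_d1_imply_c0_and_c1} then yields \COne. Condition \CTwo holds with $C := 2$ since $p_i \leq 2/n$. Invoking \cref{cor:like_ptw} with $\gamma := \epsilon'\delta'/(16CS) = \Theta((1-\delta)/S)$ gives $\Ex{\Gamma^m} = O(n)$, and Markov together with the usual inversion $e^{\gamma\max_i|y_i^m|}\leq\Gamma^m$ yields $\max_i|y_i^m| = O(S\log n/(1-\delta))$ with probability at least $1-n^{-2}$.

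For $\delta \leq 1/2$, I would take $\delta' := 1/4$ and $\epsilon' := \Theta(\delta)$. When $\delta \in [1/4, 1/2]$, rank $n/4$ lies in the low-probability region of $p$, so \DOne holds with $\epsilon' := 1-\delta \geq 1/2$ and \cref{pro:d0_and_d1_imply_c0_and_c1} again gives \COne. When $\delta < 1/4$, the rank $n/4$ falls in the high-probability region and \DOne fails; here I would verify \COne directly using the two-piece form of $p$. A short calculation shows $\sum_{i=1}^{k} p_i = (k/n)(1+\delta) - \delta$ for $k \in [n\delta, n/4]$, which at the boundary $k=n/4$ equals $(1-3\delta)\cdot(1/4)$ and verifies the prefix condition with $\epsilon' := 3\delta$; the suffix condition follows immediately because $p_i \equiv (1+\delta)/n$ for all $i > n\delta$, matching exactly the right-hand side $1 + \epsilon'\cdot\delta'/(1-\delta') = 1+\delta$ of \COne. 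Once \COne and \CTwo are in place, \cref{cor:like_ptw} with $\gamma = \Theta(\delta/S)$ yields $\Ex{\Gamma^m} = O(n)$, and the same Markov argument concludes $\max_i|y_i^m| = O(S\log n/\delta)$.

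The only step that is not entirely routine is the direct verification of \COne in the sub-range $\delta < 1/4$, since \cref{pro:d0_and_d1_imply_c0_and_c1} cannot be applied with the fixed boundary $\delta' = 1/4$ (the natural quantile $\delta$ in \DOne would be non-constant). This amounts to a short piecewise computation at the single boundary index $k = n/4$, but it is the one place where the two-piece structure of $p$ interacts non-trivially with the chosen constant split.
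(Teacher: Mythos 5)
Your proposal is correct and follows essentially the same route as the paper's proof: split on the size of $\delta$, establish condition \COne (via \DZero, \DOne and \cref{pro:d0_and_d1_imply_c0_and_c1} when the constant quantile falls in the low-probability region, and by a direct piecewise computation of the prefix/suffix sums when $\delta$ is small) together with \CTwo for $C=2$, and then invoke \cref{cor:like_ptw} with Markov's inequality. The only differences are the choice of case boundaries ($1/4$ and $1/2$ versus the paper's $1/3$ and $2/3$) and the resulting constants $\eps' = 3\delta$ versus $2\delta$, which are immaterial.
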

\begin{proof}
For any constant quantile $\delta \in (0, 1)$, the conclusion follows by \cref{cor:like_ptw}, using that the probability allocation vector of $\Quantile(\delta)$ satisfies conditions \COne and \CTwo (\cref{pro:quantile_satisfies_c1_and_c2}). 

For non-constant $\delta$, we cannot apply \cref{cor:like_ptw}. So, we consider two cases:

\textbf{Case 1 [$\delta \leq \frac{1}{3}$]:} We now show that the probability allocation vector of the $\Quantile(\delta)$ process satisfies condition \COne with $\tilde{\delta} := \frac{1}{3}$ and $\eps := 2\delta$, since
\[
\sum_{i = 1}^{n/3} p_i = \delta^2 + \left(\frac{n}{3} - n \cdot \delta\right) \cdot \frac{1 + \delta}{n} = \frac{1 - 2\delta}{3},
\]
and using that $p_i$ is non-decreasing in $i \in [n]$. Therefore, since $\max_{i \in [n]} p_i \leq \frac{2}{n}$ it satisfies conditions \COne and \CTwo and by \cref{cor:like_ptw}, we get the conclusion.

\textbf{Case 2 [$\delta > \frac{2}{3}$]:} In this case, we have that 
\[
p_{n/3} = \frac{\delta}{n} = \frac{1 - (1 - \delta)}{n}.
\]
Therefore, it satisfies condition \DOne at $\tilde{\delta} := \frac{1}{3}$ and $\eps := 1 - \delta$. Therefore, by \cref{pro:d0_and_d1_imply_c0_and_c1} it satisfies conditions \COne and \CTwo with the same parameters (and $C := 2$). So, by \cref{cor:like_ptw}, we get the conclusion.
\end{proof}

\subsection{The \texorpdfstring{$\TwinningWithQuantile(\delta)$}{\TwinningWithQuantile(δ)} Process} \label{sec:twinning_with_quantile}

Next, we analyze the \TwinningWithQuantile process defined in \cref{sec:notation_and_processes}.

\begin{thm} \label{thm:twinning_with_quantile_upper_bound}
Consider the $\TwinningWithQuantile(\delta)$ process for any constant $\delta \in (0, 1)$. Then, there exists a constant $\kappa := \kappa(\delta) > 0$, such that for any step $m \geq 0$,
\[
\Pro{\max_{i \in [n]} \left| y_i^m \right| \leq \kappa \cdot \log n} \geq 1 - n^{-2}.
\]
\end{thm}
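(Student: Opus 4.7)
My plan is to apply the main drift theorem \cref{thm:hyperbolic_cosine_expectation} with $R := 1$ and with the probability vector $p$ taken to be the probability allocation vector of $\Quantile(\delta)$, namely $p_i := \delta/n$ for $i \leq n \cdot \delta$ and $p_i := (1+\delta)/n$ otherwise. Although $\TwinningWithQuantile(\delta)$ actually samples bins uniformly, its centered one-step drift on normalized loads coincides with $p_i - 1/n$ for this $p$, and this matching is exactly what makes $p$ the correct proxy vector (as previewed in the discussion before \cref{thm:hyperbolic_cosine_expectation}).

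First, I would fix a step $t$, sort bins non-increasingly by load, and let $w_i \in \{1,2\}$ denote the number of balls added when the bin of rank $i$ is sampled (so $w_i = 1$ for $i \leq n \cdot \delta$ and $w_i = 2$ otherwise). Writing $Z_i^{t+1} := y_i^{t+1} - y_i^t$ and using that the total number of balls added in a step has expectation $2 - \delta$, a direct case analysis over which bin is sampled gives
\[
\Ex{\left. Z_i^{t+1} \,\right|\, \mathfrak{F}^t} = \frac{w_i - (2-\delta)}{n} = p_i - \frac{1}{n}.
\]
Similarly, since $|Z_i^{t+1}| \leq 2$ deterministically, one obtains the crude bound $\Ex{\left. (Z_i^{t+1})^2 \,\right|\, \mathfrak{F}^t} \leq 8/n$.

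Next, for any $\gamma \leq 1/2$ the inequality $\gamma \cdot |Z_i^{t+1}| \leq 1$ holds deterministically, so \cref{lem:z_i_t_bounded} yields the per-bin drift inequalities for $\Phi$ and $\Psi$ required by \cref{thm:hyperbolic_cosine_expectation} with $R := 1$ and $K := 8$. By \cref{pro:quantile_satisfies_c1_and_c2}, the chosen $p$ satisfies condition $\COne$ with $\tilde{\delta} := \delta$ and $\tilde{\eps} := 1-\delta$, both constants since $\delta$ is a constant. Picking $\gamma$ to be a small enough positive constant depending only on $\delta$, for instance $\gamma := \min\{1/2, \delta(1-\delta)/64\}$, \cref{thm:hyperbolic_cosine_expectation} then gives $\Ex{\Gamma^m} = \Oh(n)$.

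Finally, by Markov's inequality $\Gamma^m \leq \Oh(n^3)$ holds with probability at least $1 - n^{-2}$, and since both $\Phi^m \geq e^{\gamma \cdot \max_i y_i^m}$ and $\Psi^m \geq e^{-\gamma \cdot \min_i y_i^m}$ are at most $\Gamma^m$, this translates to $\max_{i \in [n]} |y_i^m| \leq (1/\gamma) \cdot \Oh(\log n) = \Oh(\log n)$, as required. The only real conceptual step is recognising that the centered drift of this uniform-sampling process equals $p_i - 1/n$ precisely for the $\Quantile(\delta)$ vector; once this match is spotted, verifying the preconditions of \cref{thm:hyperbolic_cosine_expectation} is mechanical, and the flexibility of the new drift theorem in allowing $p$ to be different from the actual probability allocation vector is what makes the argument go through.
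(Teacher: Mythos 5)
Your proposal is correct and follows essentially the same route as the paper: the same proxy vector (the $\Quantile(\delta)$ allocation vector), the same moment computations fed into \cref{lem:z_i_t_bounded}, and the same application of \cref{thm:hyperbolic_cosine_expectation} with $R:=1$ (the paper uses $K:=5$ from an explicit per-case second-moment calculation rather than your cruder $K:=8$). One tiny wording issue: the bound $\Ex{(Z_i^{t+1})^2\mid\mathfrak{F}^t}\le 8/n$ does not follow from $|Z_i^{t+1}|\le 2$ alone — you also need that $|Z_i^{t+1}|>2/n$ only when bin $i$ itself is sampled, which happens with probability $1/n$ — but this is immediate from the same case analysis you already performed for the first moment.
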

\begin{proof}
Let $Z_i^{t+1}$ be the change of the normalized load of bin $i \in [n]$ in allocation $t+1$. We will verify the preconditions  of \cref{thm:hyperbolic_cosine_expectation}, by applying \cref{lem:z_i_t_bounded}. To this end, we will bound the first and second moment of $Z_i^{t+1}$.

We consider two cases based on the rank of a bin $i \in [n]$, splitting them into \textit{heavy} ($\Rank^t(i) \leq n \cdot \delta$) and \textit{light} ($\Rank^t(i) > n \cdot \delta$):

\textbf{Case 1 [$\Rank^t(i) \leq n \cdot \delta$]:} If we sample a heavy bin $i$, then we allocate one ball to it, so
\begin{align*}
\Ex{\left. Z_i^{t+1} \,\right|\, \mathfrak{F}^t}
 & = \underbrace{\left(1 - \frac{1}{n} \right) \cdot \frac{1}{n}}_{\text{Allocate one ball to bin }i}
 + \underbrace{\left( - \frac{1}{n} \right) \cdot \left(\delta - \frac{1}{n} \right)}_{\text{\makecell{Allocate one ball\\[-1.5pt] to any other heavy bin}}}
 + \underbrace{\left( - \frac{2}{n}\right) \cdot (1 - \delta)}_{\text{\makecell{Allocate two balls\\[-1.5pt] to any light bin}}}
 = \frac{\delta}{n} - \frac{1}{n}.
\end{align*}
Similarly, we bound the second moment
\begin{align*}
\Ex{\left. (Z_i^{t+1})^2 \,\right|\, \mathfrak{F}^t}
 & = \left(1 - \frac{1}{n} \right)^2 \cdot \frac{1}{n}
 + \left( - \frac{1}{n} \right)^2 \cdot \left(\delta - \frac{1}{n} \right)
 + \left( - \frac{2}{n}\right)^2 \cdot (1 - \delta) \leq \frac{2}{n}.
\end{align*}
\textbf{Case 2 [$\Rank^t(i) > n \cdot \delta$]:} If we sample a light bin $i$, then we allocate two balls to it, so
\begin{align*}
\Ex{\left. Z_i^{t+1} \,\right|\, \mathfrak{F}^t}
 & = \underbrace{\left(2 - \frac{2}{n} \right) \cdot \frac{1}{n}}_{\text{Allocate two balls to bin }i}
 + \underbrace{\left( - \frac{1}{n} \right) \cdot \delta}_{\text{\makecell{Allocate one ball \\[-1.5pt] to any heavy bin}}}
 + \underbrace{\left( -\frac{2}{n} \right) \cdot \left(1 - \delta - \frac{1}{n} \right)}_{\text{\makecell{Allocate two balls \\[-1.5pt]to any other light bin}}} = \frac{\delta}{n}.
\end{align*}
Similarly, we bound the second moment
\[
\Ex{\left. (Z_i^{t+1})^2 \,\right|\, \mathfrak{F}^t}
 = \left(2 - \frac{2}{n} \right)^2 \cdot \frac{1}{n}
 + \left( - \frac{1}{n} \right)^2 \cdot \delta
 + \left( -\frac{2}{n} \right)^2 \cdot \left(1 - \delta - \frac{1}{n} \right) \leq \frac{5}{n}.
\]
Therefore, by \cref{lem:z_i_t_bounded} with any $\gamma \leq \frac{1}{2}$ (since $\sup_{t\geq 0} \max_{i \in [n]} |Z_i^{t+1}| \leq 2$) we establish the preconditions of \cref{thm:hyperbolic_cosine_expectation} with  $\gamma := \frac{\eps \delta}{8K}$, $R:= 1$, $K := 5$ and probability vector
\[
p_i^t = \begin{cases}
\frac{\delta}{n} & \text{if }i \leq n \cdot \delta, \\
\frac{1+\delta}{n} & \text{otherwise},
\end{cases}
\]
which satisfies condition \COne with $\delta$ and $\eps := 1-\delta$ by \cref{pro:quantile_satisfies_c1_and_c2}, as it coincides with the probability allocation vector of $\Quantile(\delta)$. Therefore, applying \cref{thm:hyperbolic_cosine_expectation}, we get the high probability $\Oh(\log n)$ bound on the difference between maximum and minimum load.
\end{proof}

\subsection{The \texorpdfstring{$\QuantileWithPenalty(\delta)$}{\QuantileWithPenalty(δ)} Process} \label{sec:quantile_with_penalty}

We will now analyze the \QuantileWithPenalty process as defined in \cref{sec:quantile_with_penalty_def}.

\begin{thm} \label{thm:twinning_with_penalty_upper_bound}
Consider the $\QuantileWithPenalty(\delta)$ process for any constant $\delta \in (0, 1)$. Then, there exists a constant $\kappa := \kappa(\delta) > 0$, such that for any step $m \geq 0$, 
\[
\Pro{ \max_{i \in [n]} \left| y_i^m \right| \leq \kappa \cdot \log n} \geq 1 - n^{-2}.
\]
\end{thm}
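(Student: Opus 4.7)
The plan is to mirror the proof of \cref{thm:twinning_with_quantile_upper_bound}: use \cref{lem:z_i_t_bounded} to verify the preconditions of \cref{thm:hyperbolic_cosine_expectation} with $R := 1$, apply the theorem to bound $\Ex{\Gamma^m}$ by $\Oh(n)$, and then conclude with Markov's inequality.

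First I would compute, for each bin $i \in [n]$, the first two moments of the normalized load change $Z_i^{t+1}$ by splitting on whether $i$ is \emph{heavy} ($\Rank^t(i) \leq n \delta$) or \emph{light} ($\Rank^t(i) > n\delta$). The total number of balls added in a step is $1$ if $i_1$ is light (probability $1-\delta$) and $2$ if $i_1$ is heavy (probability $\delta$), so the average load grows by $(1+\delta)/n$ per step. If $i$ is heavy, then $i$ can only receive balls when $i_1$ is heavy and $i_2 = i$, giving
\[
\Ex{\left. Z_i^{t+1} \,\right|\, \mathfrak{F}^t} = \frac{\delta}{n}\cdot\left(2-\tfrac{2}{n}\right) + \left(\delta - \tfrac{\delta}{n}\right)\cdot\left(-\tfrac{2}{n}\right) + (1-\delta)\cdot\left(-\tfrac{1}{n}\right) = \frac{\delta-1}{n}.
\]
If $i$ is light, then $i$ can receive one ball when $i_1 = i$, or two balls when $i_1$ is heavy and $i_2=i$, giving $\Ex{Z_i^{t+1} \mid \mathfrak{F}^t} = \delta/n$. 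In either case, $|Z_i^{t+1}| \leq 2$ deterministically, so $\Ex{(Z_i^{t+1})^2 \mid \mathfrak{F}^t} \leq K/n$ for some absolute constant $K > 0$.

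Second, I would choose as the auxiliary probability vector in \cref{thm:hyperbolic_cosine_expectation} precisely the probability allocation vector of $\Quantile(\delta)$, namely $p_i = \delta/n$ for $i \leq n\delta$ and $p_i = (1+\delta)/n$ otherwise. The key observation is that in both cases, $\Ex{Z_i^{t+1} \mid \mathfrak{F}^t} = p_i - \tfrac{1}{n}$, so \cref{lem:z_i_t_bounded} with $\gamma \leq 1/2$ delivers exactly the two drift inequalities required by \cref{thm:hyperbolic_cosine_expectation} with $R := 1$ and that $K$. By \cref{pro:quantile_satisfies_c1_and_c2}, this $p$ satisfies condition \COne with quantile $\delta$ and parameter $\eps := 1-\delta$.

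With these preconditions verified, \cref{thm:hyperbolic_cosine_expectation} applied to $\Gamma := \Gamma(\gamma)$ with $\gamma := \min\{1/2,\, \eps\delta/(8K)\} = \Theta(1)$ yields $\Ex{\Gamma^m} \leq (8c/\delta)\cdot n$ for a constant $c := c(\delta) > 0$ and all $m \geq 0$. Markov's inequality then gives $\Gamma^m \leq (8c/\delta)\cdot n^3$ with probability at least $1-n^{-2}$, which in turn implies $\max_{i\in[n]} |y_i^m| \leq \gamma^{-1}(\log(8c/\delta) + 3\log n) = \Oh(\log n)$. I do not anticipate a significant technical obstacle: the only subtlety is that, unlike in \Twinning-style processes, both the balls delivered to $i$ and the total balls per step are random, but the cancellation in computing $\Ex{Z_i^{t+1}}$ is exactly what makes the induced ``drift vector'' coincide with the $\Quantile(\delta)$ allocation vector, so the same \COne-bound can be reused verbatim.
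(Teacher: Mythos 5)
Your proposal is correct and follows essentially the same route as the paper: compute $\Ex{Z_i^{t+1}\mid\mathfrak{F}^t}$ and $\Ex{(Z_i^{t+1})^2\mid\mathfrak{F}^t}$ by a heavy/light case split, observe that the drift coincides with $p_i-\frac1n$ for the $\Quantile(\delta)$ allocation vector, and feed this through \cref{lem:z_i_t_bounded}, \cref{pro:quantile_satisfies_c1_and_c2}, \cref{thm:hyperbolic_cosine_expectation} and Markov. The only cosmetic point is that the second-moment bound $\Ex{(Z_i^{t+1})^2\mid\mathfrak{F}^t}\le K/n$ does not follow from $|Z_i^{t+1}|\le 2$ alone but from the additional (easy) fact that bin $i$ receives a ball with probability $\Oh(1/n)$, which the paper makes explicit with $K:=5$.
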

By \cref{cor:twinning_with_quantile_lower_bound}, it follows that this bound on the gap is asymptotically tight.

\begin{proof}
Let $Z_i^{t+1}$ be the change of the normalized load of bin $i \in [n]$ in allocation $t+1$. We will verify the preconditions  of \cref{thm:hyperbolic_cosine_expectation}, by applying \cref{lem:z_i_t_bounded}. To this end, we will bound the first and second moment of $Z_i^{t+1}$.

\textbf{Case 1 [$\Rank^t(i) \leq n \cdot \delta$]:} We allocate two balls to heavy bin $i$ if the first sample was light and the second sample was $i$, so
\begin{align*}
\Ex{\left. Z_i^{t+1} \,\right|\, \mathfrak{F}^t} = \underbrace{\left( 2 - \frac{2}{n}\right) \cdot \frac{\delta}{n}}_{\text{Allocate two balls to bin }i}
 + \underbrace{\left( -\frac{1}{n} \right) \cdot \left(1 - \delta \right)}_{\text{\makecell{Allocate one ball \\ to any light bin}}} 
 + \underbrace{\left( -\frac{2}{n} \right) \cdot \delta \cdot \left(1 - \frac{1}{n} \right)}_{\text{\makecell{Allocate two balls to \\ any other second sample}}} = \frac{\delta}{n} - \frac{1}{n}.
\end{align*}
Similarly, we bound the second moment,
\[
\Ex{\left. (Z_i^{t+1})^2 \,\right|\, \mathfrak{F}^t} = \left( 2 - \frac{2}{n}\right)^2 \cdot \frac{\delta}{n}
 + \left( -\frac{1}{n} \right)^2 \cdot \left(1 - \delta \right) 
 + \left( -\frac{2}{n} \right) \cdot \delta \cdot \left(1 - \frac{1}{n} \right) \leq \frac{5}{n}.
\]

\textbf{Case 2 [$\Rank^t(i) > n \cdot \delta$]:} We allocate one ball to light bin $i$ if it is the first sample, and we allocate two balls to it if the first sample was heavy and the second sample was $i$, so
\begin{align*}
\Ex{\left. Z_i^{t+1} \,\right|\, \mathfrak{F}^t}
 & = \underbrace{\left(1 - \frac{1}{n} \right) \cdot \frac{1}{n}}_{\text{\makecell{Allocate one ball\\to bin $i$}}}
 + \underbrace{\left(2 - \frac{2}{n}\right) \cdot \frac{\delta}{n}}_{\text{\makecell{Allocate two balls\\to bin $i$}}}
 + \underbrace{\left( -\frac{1}{n} \right) \cdot \left(1 - \delta - \frac{1}{n} \right)}_{\text{\makecell{Allocate one ball to\\ any other light bin}}}  + \underbrace{\left( -\frac{2}{n} \right) \cdot \delta \cdot \left(1 - \frac{1}{n} \right)}_{\text{\makecell{Allocate two balls to\\ any other second sample}}}
 = \frac{\delta}{n}.
\end{align*}
Similarly, we bound the second moment,
\begin{align*}
\Ex{\left. (Z_i^{t+1})^2 \,\right|\, \mathfrak{F}^t} & =
\left(1 - \frac{1}{n} \right)^2 \cdot \frac{1}{n}
 + \left(2 - \frac{2}{n}\right)^2 \cdot \frac{\delta}{n}
 + \left( -\frac{1}{n} \right)^2 \cdot \left(1 - \delta - \frac{1}{n} \right) + \left( -\frac{2}{n} \right)^2 \cdot \delta \cdot \left(1 - \frac{1}{n} \right)\\ 
 & \leq \frac{5}{n}.
\end{align*}
Therefore, by \cref{lem:z_i_t_bounded} with any $\gamma \leq \frac{1}{2}$ (since $\sup_{t\geq 0} \max_{i \in [n]} |Z_i^{t+1}| \leq 2$) we establish the preconditions of \cref{thm:hyperbolic_cosine_expectation} with  $R:= 1$, $K := 5$ and probability vector 
\[
p_i^t = \begin{cases}
\frac{\delta}{n} & \text{if }i \leq n \cdot \delta, \\
\frac{1+\delta}{n} & \text{otherwise},
\end{cases}
\]
which satisfies condition \COne with $\delta$ and $\eps := 1-\delta$ by \cref{pro:quantile_satisfies_c1_and_c2}. Therefore, applying \cref{thm:hyperbolic_cosine_expectation}, we get the high probability $\Oh(\log n)$ bound on the difference between maximum and minimum load.
\end{proof}

\subsection{The \ResetMemory Process with Weights} \label{sec:reset_memory}

In this section, we analyze the \ResetMemory process. Interestingly, in the drift inequalities of $\Phi$ and $\Psi$, the probability allocation vector of the \TwoChoice process arises.

\begin{thm} \label{thm:reset_memory_upper_bound}
Consider the \ResetMemory process in the \Weighted setting with weights from a $\FiniteMgf(S)$ distribution for $S \geq 1$. Then, there exists a constant $\kappa := \kappa(S) > 0$ such that for any step $m \geq 0$,
\[
  \Pro{\max_{i \in [n]} \big| y_i^m\big| \leq \kappa \cdot S \cdot \log n} \geq 1 - n^{-2}.
\]
\end{thm}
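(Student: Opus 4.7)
The plan is to apply \cref{thm:hyperbolic_cosine_expectation} by treating each pair of steps as a single round (so $R := 2$) and taking $p$ to be the probability allocation vector of the \OnePlusBeta-process with $\beta := \frac{1}{2}$, i.e., $p_i = \frac{1}{2n} + \frac{2i-1}{2n^2}$. By \cref{pro:one_plus_beta} this $p$ satisfies \COne with $\delta := \frac{1}{4}$, $\eps := \frac{1}{4}$ and \CTwo with $C := 2$. The key structural observation is that in a \ResetMemory round both allocation decisions are based on the load vector $x^{2t}$ at the start of the round: the step-$(2t+1)$ ball lands in $J_1 := i_1$ with uniform probability $\frac{1}{n}$, whereas the step-$(2t+2)$ ball lands in bin $j$ of rank $r := \Rank^{2t}(j)$ with probability $q_r := \frac{2r-1}{n^2}$ (the \TwoChoice vector on $x^{2t}$, since $i_1$ and $i_2$ are two independent uniform samples). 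Critically $\frac{1}{2}(\frac{1}{n} + q_r) = p_r$, so on average a round behaves like two applications of this \OnePlusBeta vector.

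To verify the drift preconditions of \cref{thm:hyperbolic_cosine_expectation} I compute $\Ex{\Delta\Phi_j^{t+1} \mid \mathfrak{F}^{2t}}$ directly. Writing $\kappa_s := \mathbf{1}[J_s = j] - \frac{1}{n}$ for $s \in \{1,2\}$, we have $\Phi_j^{t+1}/\Phi_j^{2t} = \exp(\gamma \mathcal{W}_1 \kappa_1 + \gamma \mathcal{W}_2 \kappa_2)$, where $\mathcal{W}_1, \mathcal{W}_2$ are the two weights (independent of each other and of $i_1, i_2$). Conditioning on $(i_1, i_2)$ and applying \cref{lem:bounded_weight_moment} twice (using $|\kappa_s| \leq 1$) factorizes the MGF bound as $(1 + \kappa_1\gamma + S\kappa_1^2 \gamma^2)(1 + \kappa_2 \gamma + S\kappa_2^2 \gamma^2)$. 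Expanding and taking expectation over $(i_1, i_2)$: the linear-in-$\gamma$ coefficient equals $\Ex{\kappa_1 + \kappa_2} = 0 + (q_r - \frac{1}{n}) = 2(p_r - \frac{1}{n})$, while the $\gamma^2$ coefficient is bounded by $K/n$ for a constant $K = O(S)$, using $\Ex{\kappa_s^2} \leq \Pr[J_s = j] + \frac{1}{n^2} = O(\frac{1}{n})$ and $\Ex{|\kappa_1 \kappa_2|} \leq \sqrt{\Ex{\kappa_1^2}\Ex{\kappa_2^2}} = O(\frac{1}{n})$ by Cauchy--Schwarz (the higher-order terms in $\gamma^3, \gamma^4$ are absorbed since $\gamma = O(1/S)$). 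Summing over bins in non-increasing load order at time $2t$ yields the exact form required for $\Phi$ with $R = 2$; a fully symmetric argument handles $\Psi$.

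Then applying \cref{thm:hyperbolic_cosine_expectation} with $R := 2$, the above $K$, $\delta := \frac{1}{4}$, $\eps := \frac{1}{4}$ and $\gamma := \frac{\eps\delta}{8K} = \Theta(\frac{1}{S})$ gives $\Ex{\Gamma^t} \leq \frac{8c}{\delta} \cdot n = O(n)$ at the end of every round. Markov's inequality then yields $\Gamma^m \leq \frac{8c}{\delta} \cdot n^3$ with probability $\geq 1 - n^{-2}$ for any even $m$, which implies $\max_{i \in [n]} |y_i^m| \leq \frac{1}{\gamma} \log(\frac{8c}{\delta} \cdot n^3) = O(S \log n)$; for odd $m$ the gap changes by at most one unit weight.

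The main obstacle is that the two steps of a round are not independent: $\kappa_1$ and $\kappa_2$ both depend on $i_1$, which prevents a clean iterated-expectation application of \cref{lem:single_step_change} (whose conditional probabilities $\Pr[J_2 = j \mid i_1 = j] = r/n$ are not $O(1/n)$ and so fail condition \CTwo). I resolve this by conditioning on $(i_1, i_2)$ simultaneously so that the two weights factor out cleanly; the coupling then contaminates only the $\gamma^2$ coefficient through $\Ex{\kappa_1 \kappa_2}$, which Cauchy--Schwarz controls using that both marginal variances are $\Theta(\frac{1}{n})$.
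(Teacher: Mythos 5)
Your proposal is correct and follows essentially the same route as the paper: both treat a pair of steps as one round, compute the moment generating function of the round-wise normalized load change using \cref{lem:bounded_weight_moment} and the independence of $\mathcal{W}_1,\mathcal{W}_2$ from the sampled bins, observe that the linear drift coefficient is $\big(\frac{2i-1}{n^2}-\frac{1}{n}\big)\gamma$ (the paper writes this as $R=1$ with the \TwoChoice vector, you as $R=2$ with the $(1+\tfrac12)$-vector --- an equivalent reparametrization), and then invoke \cref{lem:z_i_t_mgf} and \cref{thm:hyperbolic_cosine_expectation}. The only slip is in the final step for odd $m$: in the \Weighted setting the last ball does not have ``one unit weight''; one must instead argue, as the paper does, that its weight is $\Oh(S\log n)$ with high probability via the finite-MGF condition, and fold this into the union bound.
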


\begin{proof}
Consider any even step $2t \geq 0$ for the \ResetMemory process and let $\mathcal{W}_1$ and $\mathcal{W}_2$ be the weights of the $(2t + 1)$-th and $(2t + 2)$-th balls. For the  $i$-th heaviest bin, we have for any $\gamma \leq 1/S$ that 
\begin{align*}
& \Ex{\left. e^{\pm \gamma Z_i^{2t+2}} \, \right|\, \mathfrak{F}^{2t}} \\
 & \qquad  = \underbrace{\Ex{e^{\pm\gamma (\mathcal{W}_1+\mathcal{W}_2) (1-1/n)}} \cdot \frac{1}{n} \cdot \frac{i}{n}}_{\text{Allocate both balls to bin $i$}}
 + \underbrace{\Ex{e^{\pm \gamma (\mathcal{W}_1 (1 - 1/n) - \mathcal{W}_2/n)}} \cdot \frac{1}{n} \cdot \left( 1 - \frac{i}{n}\right)  }_{\text{Allocate only the first ball to bin $i$} }  \\
 & \qquad \qquad + \underbrace{\Ex{e^{\pm\gamma (-\mathcal{W}_1/n + \mathcal{W}_2(1-1/n))}} \cdot \frac{i-1}{n} \cdot \frac{1}{n}}_{\text{Allocate only the second ball to bin $i$}}
 + \underbrace{\Ex{e^{\mp\gamma (\mathcal{W}_1 + \mathcal{W}_2)/n}} \cdot \left( 1 - \frac{1}{n} - \frac{i}{n^2} + \frac{1}{n^2} \right)}_{\text{Allocate $0$ balls to bin $i$}} \\
 & \stackrel{(a)}{\leq} 1 + \Bigg( \pm \left( 2 - \frac{2}{n} \right) \cdot \frac{1}{n} \cdot \frac{i}{n}  \pm \left( 1 - \frac{2}{n} \right) \cdot \frac{1}{n} \cdot \left(1 - \frac{i}{n} \right) \\
 & \qquad \qquad \pm \left( 1 - \frac{2}{n} \right) \cdot \frac{i-1}{n} \cdot \frac{1}{n} \mp \frac{2}{n} \cdot \left( 1 - \frac{1}{n} - \frac{i}{n^2} + \frac{1}{n^2} \right) \Bigg) \cdot \gamma + 6S \cdot \frac{\gamma^2}{n} \\
 & = 1 \pm \left( \frac{i-1 + n-i + 2i}{n^2} - \frac{2}{n} \right) \cdot \gamma + 6S \cdot \frac{\gamma^2}{n} \\
 & = 1 \pm \left( \frac{2i - 1}{n^2} - \frac{1}{n}\right) \cdot \gamma + 6S \cdot \frac{\gamma^2}{n},
\end{align*}
using in $(a)$ \cref{lem:bounded_weight_moment} with $S := S(\zeta) \geq \max\{1, 1/\zeta\}$ and that $\mathcal{W}_1$ and $\mathcal{W}_2$ are independent.

Therefore, by \cref{lem:z_i_t_mgf} with $\gamma := \frac{1}{384 S}$ we establish the preconditions of \cref{thm:hyperbolic_cosine_expectation} with  $R:= 1$, $K := 6S$ and probability vector $p_i^t := \frac{2i - 1}{n}$ which satisfies condition \COne with $\delta := \frac{1}{4}$ and $\eps := \frac{1}{2}$ by \cref{pro:d_choice_satisfies_c0_and_c1}. Therefore, applying \cref{thm:hyperbolic_cosine_expectation}, 
there exists a constant $c > 0$ such that for any step $2t \geq 0$,\[
\Ex{\Gamma^{2t}} \leq 32c \cdot n.
\]
By using Markov's inequality,
\[
\Pro{\Gap(2t) \leq \frac{1}{\gamma} \cdot \left( 3 \log n + \log(32c) \right)} \geq 1 - n^{-2}.
\]
Let $t := \lceil m/2 \rceil$. If $m = 2t$, then we are done, otherwise the $(2t+1)$-th ball \Whp~has weight $\Oh(S \cdot \log n)$, because of the precondition on the MGF, i.e., $\Ex{e^{\zeta W}}$ is constant. Hence, in that last step $\max_{i \in [n]} |y_i^m|$ cannot change by more than $\Oh(S \cdot \log n/n)$ and hence, we the claim follows. 
\end{proof}

\subsection{The \texorpdfstring{\Batched}{b-Batched} setting} \label{sec:b_batched_setting}

In this section, we consider the \Batched setting with unit weights for processes with a probability allocation vector satisfying conditions \COne and \CTwo with constant $C > 1$. For demonstration purposes, we only prove a slightly weaker result than the one in  \cite{LS22Batched}, in the sense that it is a factor of $\log n$ from the tight bound and we only consider the unit weight setting. 

\begin{thm}
Consider any allocation process with probability allocation vector $p^t$ satisfying condition \COne for constant $\delta \in (0, 1)$ and (not necessarily constant) $\eps \in (0,1)$ as well as condition \CTwo for some constant $C > 1$ at every step $t \geq 0$. Further, consider the \Batched setting with any $b \geq n$. Then, there exists a constant $\kappa := \kappa(\delta, C) > 0$, such that for any step $m \geq 0$ being a multiple of $b$,
\[
\Pro{\max_{i \in [n]} |y_i^m| \leq \kappa \cdot \frac{1}{\epsilon} \cdot \frac{b}{n} \cdot \log n } \geq 1 - n^{-2}.
\]
\end{thm}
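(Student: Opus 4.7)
\medskip

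\noindent\textit{Proof proposal.} The plan is to invoke \cref{thm:hyperbolic_cosine_expectation} treating each batch of $b$ steps as a single round ($R := b$), and using the process's own probability allocation vector $p^t$ (which is held fixed throughout a batch) as the auxiliary vector $p$ in the theorem. Precondition \COne on $p^t$ is given, so what must be verified are the two drift inequalities for the per-round expected change of $\Phi^{t+b}$ and $\Psi^{t+b}$. I will bound these by computing the MGF of the per-round change $Z_i^{t+b}$ directly, rather than going through \cref{lem:z_i_t_bounded} (whose boundedness hypothesis $\gamma |Z_i| \leq 1$ would force $\gamma \leq 1/b$, losing a factor of $b$).

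The key calculation is the following. Within a batch, the $b$ samples are i.i.d.\ from $p^t$, so the number of balls that land in bin $i$ is $B_i \sim \mathrm{Bin}(b, p_i^t)$ and $Z_i^{t+b} = B_i - b/n$. Using $\log(1+x) \leq x$ and $e^{\gamma} - 1 \leq \gamma + \gamma^2$ (valid for $\gamma \leq 1$), a direct computation gives
\[
\Ex{e^{\pm \gamma Z_i^{t+b}}} \leq \exp\!\Big(\pm\, b\big(p_i^t - \tfrac{1}{n}\big)\gamma \,+\, b p_i^t \gamma^2 \Big).
\]
Choosing $\gamma$ small enough that the exponent above is at most $1$ (which, via $p_i^t \leq C/n$ from \CTwo, only requires $\gamma = O(n/(Cb))$), I apply $e^x - 1 \leq x + x^2$ and expand the square using $(a+c)^2 \leq 2a^2 + 2c^2$ together with $|p_i^t - 1/n| \leq C/n$. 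The cross-terms collapse to an additive $O(C^2 b^2 \gamma^2 / n^2) = O((C^2 b/n)) \cdot b\gamma^2/n$, so the preconditions of \cref{thm:hyperbolic_cosine_expectation} hold for both $\Phi$ and $\Psi$ with
\[
K := K(C,\delta) \cdot \frac{b}{n},
\]
for a suitable constant $K(C,\delta)$.

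Now I set $\gamma := c_1 \cdot \epsilon n/b$ for a sufficiently small constant $c_1 := c_1(\delta, C) > 0$; since $b \geq n$ this gives $\gamma \leq 1$ and also satisfies $\gamma \leq \epsilon\delta/(8K)$, matching the preconditions of \cref{thm:hyperbolic_cosine_expectation}. The theorem then yields a constant $c := c(\delta) > 0$ with $\Ex{\Gamma^m} \leq (8c/\delta)\cdot n$. Markov's inequality gives $\Gamma^m \leq (8c/\delta) \cdot n^3$ with probability at least $1 - n^{-2}$, and on this event
\[
\max_{i \in [n]} |y_i^m| \leq \frac{1}{\gamma} \cdot \left( \log\left(\tfrac{8c}{\delta}\right) + 3\log n \right) = O\!\left( \frac{1}{\epsilon} \cdot \frac{b}{n} \cdot \log n \right),
\]
which is the claimed bound. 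The main obstacle is that the second-order Taylor term $(b(p_i^t - 1/n)\gamma)^2$ is not $O(b\gamma^2/n)$ but only $O((b/n) \cdot b\gamma^2/n)$; this forces $K$ to grow like $b/n$, which is precisely what transfers the extra factor $b/n$ into the final gap bound. Once this $K$ is accepted, the remainder of the proof is a direct application of the machinery developed earlier in the paper.
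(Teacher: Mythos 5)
Your proposal is correct and follows essentially the same route as the paper: the paper also treats each batch as one round with $R := b$, bounds $\Ex{e^{\pm\gamma Z_i^{t+b}}}$ via the binomial MGF and the Taylor estimate $e^v \le 1+v+v^2$ (packaged as \cref{lem:z_i_t_mgf}), arrives at $K = 5C^2 b/n$, and then applies \cref{thm:hyperbolic_cosine_expectation} with $\gamma = \Theta(\eps\delta n/(C^2 b))$. Your observation that \cref{lem:z_i_t_bounded} would lose a factor of $b$ and that the second-order term is what forces $K = \Theta(b/n)$ matches the paper's reasoning exactly.
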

\begin{proof}
Consider any $\gamma \in \big(0, \frac{n}{2Cb}\big]$. Then, 
\begin{align} \label{eq:batched_term_bound}
\left| \pm \left(p_i^t - \frac{1}{n} \right) \cdot b \cdot \gamma + p_i^t \cdot b \cdot \gamma^2 \right| 
 \stackrel{(a)}{\leq} \frac{C}{n} \cdot b \cdot \gamma + \frac{C}{n} \cdot b \cdot \gamma^2 
 \stackrel{(b)}{\leq} 2 \cdot \frac{C}{n} \cdot b \gamma
 \leq 1,
\end{align}
using in $(a)$ that $p_i^t \leq \frac{C}{n}$ (and $C > 1$) and in $(b)$ that $\gamma \leq 1$.

Let $Z_i^{t+b}$ be the change of the normalized load of bin $i \in [n]$ over one round (consisting of $b$ steps). We have that $Z_i^{t+b} = Y_i^{t+b} - \frac{b}{n}$ where $Y_i^{t+b} \sim \mathsf{Bin}(b, p_i^t)$. Hence, $\Ex{ Z_i^{t+b}} = (p_i^t - \frac{1}{n}) \cdot b$. In order to verify the precondition of \cref{lem:z_i_t_mgf}, we estimate 
\begin{align*}
\Ex{e^{\pm \gamma Z_i^{t+b}}}
 & \stackrel{(a)}{=} \Ex{e^{\pm \gamma Y_i^{t+b}}} \cdot e^{\mp \gamma b/n} \\
 & \leq (1 + p_i^t \cdot ( 1 - e^{\pm \gamma}))^b \cdot e^{\mp \gamma b/n} \\
 & \stackrel{(b)}{\leq} (1 + p_i^t \cdot ( \pm \gamma + \gamma^2))^b \cdot e^{\mp \gamma b/n} \\
 & \leq e^{\pm (p_i^t - 1/n) \cdot b  \cdot \gamma + p_i^t \cdot b \cdot \gamma^2} \\
 & \stackrel{(c)}{\leq} 1 \pm \left(p_i^t - \frac{1}{n}\right)  \cdot b \cdot \gamma + p_i^t \cdot b \cdot \gamma^2 + \left( \pm \left(p_i^t - \frac{1}{n} \right) \cdot b \cdot \gamma + p_i^t \cdot b \cdot \gamma^2 \right)^2 \\
 & \leq 1 \pm \left(p_i^t - \frac{1}{n}\right)  \cdot b \cdot \gamma + p_i^t \cdot b \cdot \gamma^2 + \left( 2 \cdot \frac{C}{n} \cdot b \cdot \gamma \right)^2 \\
 & \leq 1 \pm \left(p_i^t - \frac{1}{n}\right)  \cdot b \cdot \gamma + \frac{5C^2 b}{n} \cdot b \cdot \frac{\gamma^2}{n} \\
 &= 1 \pm \Ex{ Z_i^{t+b}   }  \cdot \gamma + \frac{5C^2 b}{n} \cdot b \cdot \frac{\gamma^2}{n},
\end{align*}
using in $(a)$ that $\ex{e^{tY}} = (1 - p + pe^t)^b$ for $Y \sim \mathsf{Bin}(b, p)$, in $(b)$ the Taylor estimate $e^v \leq 1 + v + v^2$ for $|v| \leq 1$ and $\gamma \leq 1$, in $(c)$ the same Taylor estimate and inequality \cref{eq:batched_term_bound}.

Therefore, by \cref{lem:z_i_t_mgf} with any $\gamma \leq \frac{1}{2}$, $\ell_1 := 1$ and $\ell_2 := \frac{5C^2b^2}{n^2}$ we establish the preconditions of \cref{thm:hyperbolic_cosine_expectation} with  $R:= b$, $K := \frac{5C^2b}{n}$ and probability vector $p^t$ which satisfies condition \COne and \CTwo by assumption. Therefore, applying \cref{thm:hyperbolic_cosine_expectation} with $\gamma := \frac{\eps\delta}{8K} = \frac{\eps \delta n}{40 C^2 b}$, we get \Whp~the $\Oh\big(\frac{1}{\eps} \cdot \frac{b}{n} \cdot \log n\big)$ bound on the difference between maximum and minimum load.
\end{proof}

\section{Lower Bounds} \label{sec:lower_bounds}

In this section, we prove a general lower bound that applies to any process that allocates at least one ball (of unit weight) with uniform probability in each round (which could consist of multiple steps). This implies an $\Omega(\log n)$ bound for the gap of  $\TwinningWithQuantile(\delta)$, $\QuantileWithPenalty(\delta)$ and \ResetMemory for any $\delta = \Theta(1)$.

In the following lemma, we prove a lower bound for general allocation processes which allocate a constant number of balls in each round and at least one ball with uniform probability across the bins.

\begin{lem} \label{lem:general_lower_bound}
Consider an allocation process
for which there are constants $c > 0$ and $c_{max} > 0$ such that in each round $t \geq 0$, the process allocates at least one ball with uniform probability $q \geq c/n$ over all bins, and at most $c_{\max}$ balls in total. Then, there exists a round $m := m(c, c_{\max})$ and constant $\kappa : = \kappa(c, c_{\max}) > 0$ such that
\[
  \Pro{\Gap(m) \geq \kappa \cdot \log n} \geq 1 - n^{-1}.
\]
\end{lem}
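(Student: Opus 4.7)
The plan is to decouple a genuinely uniform portion of the allocations from the rest, lower bound the maximum load produced by that portion alone via a classical heavily-loaded \OneChoice analysis, and then show this overwhelms the increase in the average caused by the at most $c_{\max}-1$ remaining balls per round. Choosing $m = C_1 n \log n$ with a sufficiently small constant $C_1 = C_1(c, c_{\max})$ will convert the $\sqrt{(m/n) \log n}$ \OneChoice deviation into an $\Omega(\log n)$ gap.

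The first step is a coupling. In each round $t$, the guaranteed uniform ball has per-bin probability $q_i^t \geq c/n$, so its distribution decomposes as $c \cdot \mathrm{Unif}([n]) + (1-c) \cdot \mu^t$ for some distribution $\mu^t$. Realise this by sampling an independent Bernoulli$(c)$ variable $B^t$: when $B^t=1$ (``good round'') place this ball uniformly at random, and otherwise sample from $\mu^t$. Let $T := \sum_{t=1}^m B^t \sim \mathrm{Binomial}(m, c)$; by Chernoff, $T \geq cm/2$ with probability $\geq 1 - n^{-2}$ once $m \geq C_0 \log n / c$. Conditionally on the set of good rounds, the good-round balls fall into the bins independently and uniformly, while all other allocations, which may depend arbitrarily on the full history, contribute non-negatively to the loads.

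Writing $U_i$ for the number of good-round balls in bin $i$, invoke the classical heavily-loaded \OneChoice lower bound: there exist absolute constants $C_2, C_3 > 0$ such that if $T \geq C_3 \, n \log n$ then $\max_i U_i \geq T/n + C_2 \sqrt{(T/n)\log n}$ with probability $\geq 1 - n^{-2}$. Since bin loads only grow, $x_i^m \geq U_i$ pointwise, whereas $W^m \leq c_{\max}\, m$ bounds the total weight. Therefore, on the intersection of the previous two high-probability events,
\[
\Gap(m) \;\geq\; \max_i U_i - \frac{W^m}{n} \;\geq\; \Big(\tfrac{cC_1}{2} + C_2\sqrt{\tfrac{cC_1}{2}} - c_{\max}C_1\Big)\cdot \log n.
\]
Because $c \leq 1 \leq c_{\max}$, the bracket is a concave function of $\sqrt{C_1}$ attaining its maximum $\tfrac{cC_2^2}{8(c_{\max}-c/2)} > 0$ at $C_1^{\star} = \tfrac{cC_2^2}{8(c_{\max}-c/2)^2}$. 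Fixing $C_1 := C_1^{\star}$, $m := \lceil C_1 n \log n \rceil$ and $\kappa := \tfrac{cC_2^2}{8(c_{\max}-c/2)}$, and applying a final union bound over the two failure events, yields $\Gap(m) \geq \kappa \log n$ with probability at least $1 - n^{-1}$.

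The main obstacle is the \OneChoice lower bound $\max_i U_i \geq T/n + \Omega(\sqrt{(T/n)\log n})$ in the heavily-loaded regime: the bin loads under the multinomial are not independent, so a direct product-of-tails argument does not apply. The standard remedy is Poissonisation --- replace $T$ by a $\mathrm{Poisson}(\Ex{T})$ variable to make bin loads i.i.d.\ Poisson, use independence plus a Berry--Esseen (or Paley--Zygmund) estimate on a single bin to show at least one bin exceeds the threshold, then de-Poissonise via a conditioning argument. Alternatively, negative association of the multinomial coordinates reduces the $n$-bin event to the same anti-concentration statement for a single $\mathrm{Binomial}(T, 1/n)$.
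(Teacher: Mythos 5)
Your proposal is correct and follows essentially the same route as the paper's proof: decompose the guaranteed uniform ball into a Bernoulli$(c)$ mixture with a pure \OneChoice component, apply a Chernoff bound to get $\Theta(m)$ uniform allocations, invoke the heavily-loaded \OneChoice lower bound of $T/n+\Omega(\sqrt{(T/n)\log n})$ (which the paper simply cites from~\cite[Section 4]{PTW15}), and choose $m=\Theta(n\log n)$ small enough that the deviation term dominates the increase $c_{\max}m/n$ of the average. The only differences are cosmetic — you optimize the constant $C_1$ explicitly where the paper fixes $C:=\frac{c}{400ec_{\max}^2}$, and you sketch a Poissonization/negative-association proof of the \OneChoice anti-concentration step that the paper treats as a black box.
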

\begin{proof}
Let $m := C n \log n$, where $C := \frac{c}{400 \cdot e \cdot c_{\max}^2 }$. By a Chernoff bound (\cref{lem:multiplicative_factor_chernoff_for_binomial}), \Whp~in these $m$ steps at least $\frac{c}{e} \cdot m$ balls are being allocated using a \OneChoice process. 

By, e.g.~\cite[Section 4]{PTW15}, when $r n\log n$ balls are allocated using \OneChoice, for any constant $r > 0$, then with probability at least $1 - n^{-2}$, the maximum load is at least $(r+\frac{1}{10} \sqrt{r}) \log n$. Using this with $r:=\frac{c^2}{400 \cdot e^2 \cdot c_{\max}^2}$ and applying the union bound, we conclude that
\[
\Pro{\max_{i \in [n]} x_i^m \geq \left( \frac{c^2}{400   \cdot e^2 \cdot c_{\max}^2} + \frac{1}{10} \cdot \sqrt{\frac{c^2}{400   \cdot e^2 \cdot c_{\max}^2}} \right) \cdot \log n} \geq 1 - n^{-1}.
\]
Further, at step $m$ the average load can be bounded by
\[
\frac{W^m}{n} \leq \frac{m \cdot c_{\max}}{ n} = C \cdot c_{\max} \cdot \log n = \frac{c}{400 \cdot e \cdot c_{\max}} \cdot \log n = \frac{1}{20} \cdot \sqrt{ \frac{c^2}{400 \cdot e^2 \cdot c_{\max}^2}} \cdot \log n.
\]
Therefore,
\[
\Pro{\Gap(m) \geq \frac{c}{400 \cdot e \cdot c_{\max}} \cdot \log n} \geq 1 - n^{-1}. \qedhere
\]
\end{proof}

The $\TwinningWithQuantile(\delta)$ process has a uniform probability of $\delta/n$ to allocate (at least) one ball over all bins and allocates at most two balls per step, so by \cref{lem:general_lower_bound} we get a matching lower bound.  

\begin{cor} \label{cor:twinning_with_quantile_lower_bound}
Consider the $\TwinningWithQuantile(\delta)$ process for any constant quantile $\delta \in (0, 1)$. Then, there exist constants $c := c(\delta)$ and $\kappa := \kappa(\delta) > 0$, such that\[
  \Pro{\Gap(cn \log n) \geq \kappa \cdot \log n} \geq 1 - n^{-1}.
\]
\end{cor}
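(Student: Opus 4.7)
The plan is to apply \cref{lem:general_lower_bound} directly by verifying its two preconditions for the $\TwinningWithQuantile(\delta)$ process. Unlike more intricate lower bound arguments (where one must construct a specific load configuration or extract a \OneChoice-like sub-schedule by hand), here the process is already structured so that every step can be viewed as a uniform allocation of at least one ball, so no coupling argument is needed on top of the generic lemma.

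First I would observe from the process definition that at every step $t \geq 0$ a single bin $i \in [n]$ is sampled uniformly at random, and that bin then receives either one ball (if $\Rank^t(i) \leq n \cdot \delta$) or two balls (otherwise). Consequently, every bin $i$ receives at least one ball with probability exactly $\frac{1}{n}$, independently of the current load vector and of $\delta$. This gives the uniform-allocation precondition of \cref{lem:general_lower_bound} with $c := 1$ (so that $q = 1/n \geq c/n$). Moreover at most two balls are allocated per step, so I can take $c_{\max} := 2$.

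Then I would invoke \cref{lem:general_lower_bound} with the universal constants $c = 1$ and $c_{\max} = 2$ to obtain a round $m = \Theta(n \log n)$ and a constant $\kappa > 0$ (both of which in fact do not depend on $\delta$, though the statement permits such dependence) satisfying
\[
\Pro{\Gap(m) \geq \kappa \cdot \log n} \geq 1 - n^{-1},
\]
which is exactly the claim of the corollary.

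There is no substantive obstacle in this proof: all of the real work is already encapsulated in \cref{lem:general_lower_bound}, which handles the Chernoff bound extracting a \OneChoice-like subsequence of at least $\tfrac{c}{e} \cdot m$ uniform allocations, together with the known $(r + \tfrac{1}{10}\sqrt{r}) \log n$ lower bound for the \OneChoice maximum load. The only thing to check here is that $\TwinningWithQuantile(\delta)$ fits the abstract template, which is immediate from the definition of the process.
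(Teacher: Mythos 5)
Your proposal is correct and matches the paper's argument, which likewise just verifies the preconditions of \cref{lem:general_lower_bound} (at least one ball to a uniformly sampled bin, at most two balls per step) and invokes it. The only cosmetic difference is that you take $c := 1$ while the paper conservatively uses $c := \delta$; both satisfy the lemma's hypothesis, and your choice is in fact slightly sharper since the sampled bin always receives at least one ball regardless of its rank.
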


The $\QuantileWithPenalty(\delta)$ process has a uniform probability of $\delta/n$ to allocate one ball over all bins and allocates at most two balls per step, so by \cref{lem:general_lower_bound} we get a matching lower bound.  

\begin{cor} \label{cor:twinning_with_penalty_lower_bound}
Consider the $\QuantileWithPenalty(\delta)$ process for any constant quantile $\delta \in (0, 1)$. Then, there exist constants $c := c(\delta)$ and $\kappa := \kappa(\delta) > 0$, such that\[
  \Pro{\Gap(cn \log n) \geq \kappa \cdot \log n} \geq 1 - n^{-1}.
\]
\end{cor}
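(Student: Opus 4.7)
My plan is to verify the two preconditions of \cref{lem:general_lower_bound} for the $\QuantileWithPenalty(\delta)$ process with constants $c := \delta/2$ and $c_{\max} := 2$, and then invoke that lemma directly, paralleling the proof of \cref{cor:twinning_with_quantile_lower_bound}.

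The bound $c_{\max} = 2$ is immediate: each step allocates exactly one ball (to $i_1$ when $\Rank^t(i_1) > n \cdot \delta$) or exactly two balls (to $i_2$ when $\Rank^t(i_1) \leq n \cdot \delta$), so the total number of balls allocated in a single step is trivially bounded by $2$.

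For the uniform lower bound on the probability that a fixed bin $i \in [n]$ receives at least one ball in step $t$, the key observation is that the event $E_i := \{\, i_2 = i,\; \Rank^t(i_1) \leq n \cdot \delta \,\}$ already delivers two balls to bin $i$. By the independence of $i_1$ and $i_2$, we have $\Pro{E_i} = \frac{1}{n} \cdot \frac{\lfloor n \delta \rfloor}{n} \geq \frac{\delta}{2n}$ for sufficiently large $n$. Crucially, this lower bound is uniform in $i$ because it only involves the independent uniform draw of $i_2$; whether bin $i$ is currently heavy or light only contributes extra probability through the disjoint branch $\{i_1 = i,\; \Rank^t(i) > n \cdot \delta\}$ and never reduces it. Hence the \OneChoice-style condition of \cref{lem:general_lower_bound} holds with $q := \frac{\delta}{2n}$ and $c := \frac{\delta}{2}$.

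With both preconditions verified for constants depending only on $\delta$, \cref{lem:general_lower_bound} directly supplies $m := m(\delta)$ of the form $c' n \log n$ and a constant $\kappa := \kappa(\delta) > 0$ with $\Pro{\Gap(m) \geq \kappa \cdot \log n} \geq 1 - n^{-1}$. I do not anticipate any technical obstacle: the proof is a clean instantiation of the general template. The only conceptual subtlety worth flagging is that the \OneChoice-rate comes exclusively from the second-sample (two-ball) branch, since the first-sample (one-ball) branch is restricted to light bins and, in isolation, would give a zero lower bound on the probability of allocating to a heavy bin.
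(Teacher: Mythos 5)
Your proposal is correct and follows exactly the paper's route: the paper likewise observes that the second-sample branch gives every bin a uniform probability of order $\delta/n$ of receiving a ball, that at most two balls are allocated per step, and then invokes \cref{lem:general_lower_bound}. Your write-up merely makes explicit the verification that the paper leaves as a one-line remark, including the (correct) point that the uniform rate must come from the second-sample branch rather than the rank-restricted first-sample branch.
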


Since \ResetMemory allocates one ball using \OneChoice every two steps, by \cref{lem:general_lower_bound} considering rounds with two steps, we obtain a matching lower bound.

\begin{cor}\label{cor:reset_memory_lower_bound}
Consider the \ResetMemory process. Then, there exist constants $\kappa > 0$ and $c > 0$, such that 
\[
\Pro{\Gap(c n \log n) \geq \kappa \cdot \log n} \geq 1 - n^{-1}.
\]
\end{cor}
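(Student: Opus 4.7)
The plan is to apply \cref{lem:general_lower_bound} directly, treating each pair of consecutive steps of \ResetMemory as a single round. This grouping matches the natural structure of the process: every even step $2t$ performs a pure \OneChoice allocation of one unit ball (since $i_1$ is chosen uniformly at random and the ball is placed there unconditionally), and every odd step $2t+1$ performs at most one additional unit-weight allocation.

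With this rounding, I verify the two preconditions of \cref{lem:general_lower_bound}. First, in every round at least one ball is allocated with probability exactly $q = 1/n$ per bin, coming from the \OneChoice step at time $2t$; this gives the uniform-allocation constant $c = 1$. Second, each round allocates at most $c_{\max} = 2$ balls (one per step). Both constants are independent of $n$ and independent of the weight distribution (here we assume unit weights, matching the statement of \cref{lem:general_lower_bound}).

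Invoking \cref{lem:general_lower_bound} with $c = 1$ and $c_{\max} = 2$ yields a constant $C := C(1,2) > 0$ and $\kappa' := \kappa'(1,2) > 0$ such that, setting $M := C n \log n$ \emph{rounds},
\[
\Pro{\Gap_{\text{rounds}}(M) \geq \kappa' \cdot \log n} \geq 1 - n^{-1}.
\]
Since one round corresponds to two steps of \ResetMemory, after $t = 2M = 2Cn\log n$ steps the gap of the process satisfies the same bound. Choosing $c := 2C$ and $\kappa := \kappa'$ gives the claimed statement.

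There is no genuine obstacle here: the only thing to check is that the first step of each round is a pure uniform allocation, which is immediate from the definition of \ResetMemory (the cache is reset before step $2t$, so $i_1$ is drawn uniformly and the ball is placed without any comparison). Consequently, the $\Omega(\log n)$ lower bound follows from the universal lower bound of \cref{lem:general_lower_bound}, matching the upper bound of \cref{thm:reset_memory_upper_bound} up to constants.
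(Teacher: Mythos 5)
Your proposal is correct and matches the paper's argument exactly: the paper also obtains \cref{cor:reset_memory_lower_bound} by applying \cref{lem:general_lower_bound} with rounds consisting of two steps, using that the first step of each round is a pure \OneChoice allocation (so $c=1$) and at most two balls are allocated per round (so $c_{\max}=2$). The round-to-step conversion and the unit-weight assumption are handled the same way, so there is nothing to add.
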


Further, for any process making a constant number of bin samples in each round, it follows by a coupon collector's argument, that one bin is not sampled in the first $m = \Theta(n \log n)$ rounds.
\begin{rem}
Consider any allocation process which in each round allocates to a subset of a constant number of $d$ bins sampled uniformly at random and allocates to at least one of them a unit weight ball. Then, there exists a constant $\kappa > 0$ and a constant $c$, such that,
\[
  \Pro{\min_{i \in [n]} y_i^{cn \log n}  \leq - \kappa \cdot \log n} \geq 1 - n^{-1}.
\]
\end{rem}

Finally, for any processes in the \Weighted setting with a suitable distribution, \Whp~at least one ``heavy'' ball will emerge in the first $n$ allocations (and also the average load only increases to $\Oh(1)$). Putting these observations together immediately yields:
\begin{rem} \label{rem:weighted_lower_bound}
Consider any allocation process in the \Weighted setting with weights from an $\mathsf{Exp}(1)$ distribution. Then,\[
\Pro{\Gap(n) \geq \frac{1}{2} \cdot \log n} \geq 1 - n^{-1}.
\]
\end{rem}

As described in \cref{sec:bounds_on_bins_with_load_at_least}, the fact that the hyperbolic cosine potential is $\Oh(n)$ in expectation gives an $\Oh(n \cdot e^{-\Omega(z)})$ bound on the number of bins with normalized load at least $+ z$. In the following, we show that these bounds are asymptotically tight for all values of $z \in (0, \Oh(\log n))$ for the \OnePlusBeta-process with constant $\beta \in (0, 1)$ and $\Quantile(\delta)$ with constant $\delta \in (0, 1)$.

\begin{lem}
\label{lem:lower_bound_overload_height}
Consider any allocation process
for which there are constants $c \in (0, 1]$ and $c_{max} \geq 1$ such that in each round $t \geq 0$, the process allocates at least one ball with uniform probability $q \geq c/n$ over all bins, and at most $c_{\max}$ balls in total. Then, there exist constants $c_1, c_2 > 0$ such that for any $z \in (0, c_1 \log n)$ and $m := n \cdot z$,
\[
  \Pro{\Bigl| i \in [n] : y_i^m \geq z \Bigr| \geq \frac{1}{2} n  \cdot e^{-c_2 z}} \geq 1 - \frac{1}{2} e^{-1/8}.
\]
\end{lem}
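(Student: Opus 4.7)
The plan is to exploit the guaranteed uniform allocation per round as if it were an independent \OneChoice sub-process and count the bins that accumulate enough uniform balls. By a standard coupling, in each round we may assume that with probability $c$ a ``uniform-guaranteed'' ball is dispatched to a uniformly random bin (and with probability $1-c$ the uniform source contributes nothing), while the remaining balls of the round are allocated arbitrarily. Writing $U_i^m$ for the number of uniform-guaranteed balls received by bin $i$ during the first $m = nz$ rounds, we have $U_i^m \sim \mathsf{Bin}(nz, c/n)$ marginally, and the vector $(U_1^m,\ldots,U_n^m)$ is multinomial, hence negatively associated. Since the process allocates at most $c_{\max}$ balls per round, $W^m/n \leq c_{\max} z$, and since $x_i^m \geq U_i^m$, the event $\{y_i^m \geq z\}$ is implied by $\{U_i^m \geq (c_{\max}+1) z\}$.

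Setting $K := \lceil (c_{\max}+1) z \rceil$ and $Y_i := \mathbf{1}\{U_i^m \geq K\}$, the next step is a lower bound on $\Pr[Y_i = 1]$. Using Stirling's approximation on a single term of the binomial PMF,
\[
\Pr\bigl[\mathsf{Bin}(nz, c/n) = K\bigr]
 = \binom{nz}{K}\!\left(\frac{c}{n}\right)^{\!K}\!\left(1 - \frac{c}{n}\right)^{\!nz - K}
 \geq \frac{1}{C\sqrt{z}} \cdot e^{-c_2' z},
\]
where $c_2' > 0$ and $C > 0$ are explicit constants depending only on $c$ and $c_{\max}$; positivity of $c_2'$ is the large-deviation rate of $\mathsf{Bin}(nz,c/n)$ at value $K$, which is strict because $K$ sits to the right of the mean $cz$. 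Absorbing the $1/\sqrt{z}$ prefactor into the exponent (for $z \geq 1$) yields $\Pr[Y_i = 1] \geq e^{-c_2 z}$ with a slightly larger constant $c_2$, so $\mu := \mathbb{E}[\sum_i Y_i] \geq n \cdot e^{-c_2 z}$.

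The final step is concentration. Because each $Y_i$ is a monotone function of a single coordinate of the negatively associated vector $(U_1^m,\ldots,U_n^m)$, the family $(Y_i)_{i \in [n]}$ is itself negatively associated, and so the multiplicative Chernoff lower tail applies: $\Pr[\sum_i Y_i \leq \mu/2] \leq e^{-\mu/8}$. Choosing $c_1 \in (0, 1/c_2)$, for every $z \in (0, c_1 \log n)$ and all sufficiently large $n$ we have $\mu \geq n^{1 - c_1 c_2} \geq 1 + 8 \log 2$, whence $e^{-\mu/8} \leq \frac{1}{2} e^{-1/8}$. Since $\mu/2 \geq \frac{1}{2} n e^{-c_2 z}$, this is exactly the asserted high-probability bound on $|\{i \in [n] : y_i^m \geq z\}|$.

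The main obstacle is the anti-concentration calculation: turning the Stirling estimate into a clean exponential $e^{-c_2 z}$ valid uniformly across $z$ requires balancing the $1/\sqrt{z}$ prefactor against the exponential and carefully tracking the integer rounding of $K$ (which matters when $(c_{\max}+1)z$ is close to an integer). A secondary wrinkle is the regime of small $z$, where $K$ is a small integer and the Stirling expansion is crude; there one can bypass it by a direct Poisson-style computation of $\Pr[U_i^m \geq K]$ with $K \in \{1,2,\ldots\}$ bounded, after which the remainder of the argument (negative association plus Chernoff) is unchanged.
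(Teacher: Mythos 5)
Your proposal is correct and follows the same overall strategy as the paper -- couple out a \OneChoice sub-process that sends a ball to each bin with probability $c/n$ per round, lower-bound the point probability of the per-bin count at the threshold $(c_{\max}+1)z$ via a Stirling computation, and then concentrate the number of such bins -- but it handles the dependence between bins differently. The paper Poissonizes: it conditions on the number of \OneChoice balls being at least $m' = \frac{c}{2}m$, replaces the occupancy counts by independent $\mathsf{Poi}(\frac{c}{2}z)$ variables via the Poisson approximation, applies an ordinary Chernoff bound to the independent indicators, and then pays for the conditioning with the extra $\Pro{M \geq m'} \geq 1 - e^{-1/8}$ factor (which is where the $\frac{1}{2}e^{-1/8}$ in the statement comes from). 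You instead keep the exact $\mathsf{Bin}(nz, c/n)$ marginals, observe that the occupancy vector is negatively associated (it is a projection of a multinomial with an extra ``no uniform ball'' category, or equivalently a sum of independent zero-one vectors with at most one nonzero entry), and invoke the Chernoff lower tail for negatively associated indicators. This is a genuinely different and arguably cleaner concentration step: it avoids both the Poisson approximation machinery and the conditioning on $M$, and yields failure probability $e^{-\mu/8} = o(1)$ outright. The one caveat you share with the paper is the regime of small $z$: for $z = o(1)$ the bound $\Pro{U_i^m \geq K} \geq e^{-c_2 z}$ cannot hold (the left side tends to $0$ while the right tends to $1$), and neither your ``direct Poisson-style computation'' nor the paper's ``follows trivially'' remark genuinely repairs this; the statement should be read as requiring $z \geq c_3$ for a suitable constant $c_3 > 0$, in which range your bounded-$K$ computation does close the gap.
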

\begin{proof}
We define a coupling between the allocations of the process and a \OneChoice process. In each step $t \geq 0$:
\begin{itemize}
  \item With probability $c$, we allocate following the \OneChoice process (i.e., we allocate to a bin chosen uniformly at random).
  \item Otherwise, we allocate to a bin $i \in [n]$ chosen with probability $\frac{p_i^t - c/n}{1 - c}$.
\end{itemize}
We will show that just the \OneChoice allocations are enough to produce sufficiently many bins with load at least $c_{\max} \cdot \frac{m}{n} + z$ and this will imply the statement as the average can change by at most $c_{\max}/n$ in each step.

We will use the Poisson Approximation~\cite[Chapter 5]{MU17}. To this end, let $Z_1,Z_2,\ldots,Z_n$ be $n$ independent Poisson random variables with parameter $\lambda=\frac{m'}{n}=\frac{c}{2} \cdot \frac{m}{n} = \frac{c}{2} z$. We will now use the following lower bound for each Poisson random variable $Z_i \sim \mathsf{Poi}(\lambda)$,
\[
  \Pro{Z_i \geq \lambda + u} \geq \Pro{Z_i = \lambda + u} = \frac{e^{-\lambda} \cdot \lambda^{\lambda + u}}{(\lambda + u)!}
  \geq \frac{e^{-\lambda} \cdot \lambda^{\lambda + u}}{e \cdot (\lambda + u) \cdot \big(\frac{\lambda + u}{e}\big)^{\lambda + u}},
\]
using that $v! \leq e \cdot v \cdot \big(\frac{v}{e}\big)^v$. We choose $u$ such that
\[
\lambda + u = c_{\max} \cdot \frac{m}{n} + z \quad \Leftrightarrow \quad
u = \left(c_{\max} + 1 - \frac{c}{2}\right) \cdot z.
\]
Therefore, 
\begin{align*}
\Pro{Z_i \geq \lambda + u}
 & \geq \exp\left( - \lambda + (\lambda + u) \cdot \left( 1 +  \log \frac{\lambda}{\lambda + u} \right) - 1 - \log(\lambda + u) \right) \\
 & \stackrel{(a)}{\geq} \exp\left( - \lambda + (\lambda + u) \cdot \left(1 - \frac{u}{\lambda}\right) - 1 - \log(\lambda + u) \right) \\
 & = \exp\left(  - \frac{u^2}{\lambda} - 1 - \log(\lambda + u) \right) \\
 & \stackrel{(b)}{\geq} \exp( - c_2 z),
\end{align*}
where in $(a)$ we used the Taylor estimate $\log \frac{1}{1 + v} \geq - v$ (since $e^v \geq 1 + v$), and $(b)$ holds for some constant $c_2 > 0$, using that $z \geq c_3$ for some sufficiently large constant $c_3 > 0$. For $z \in (0, c_3)$, this follows trivially by considering the bins with normalized load at least $c_3$ (and a smaller $c_2$). 

Therefore, by applying a standard Chernoff bound with $\delta := \frac{1}{2}$, we have that
\[
  \Pro{\Bigl| i \in [n] : Z_i \geq c_{\max} \cdot \frac{m}{n} + z \Bigr| \geq \frac{1}{2} n  \cdot e^{-c_2 z}} \geq 1 - e^{-\frac{1}{8} \cdot n  \cdot e^{-c_2 z}} \geq 1 - o(1),
\]
using that $z < c_1 \log n$ for sufficiently small constant $c_1 > 0$. Let $M$ be the number of balls allocated with \OneChoice, then by the previous inequality we obtain that\[
\Pro{ \left. \Bigl| i \in [n] : y_i^m \geq z \Bigr| \geq \frac{1}{2} n  \cdot e^{-c_2 z} \,\right|\, M \geq m'} \geq 1 - o(1),
\]
By a Chernoff bound with $\delta := \frac{1}{2}$ we have that the number of balls $M$ allocated using \OneChoice is 
\[
  \Pro{M \geq m'} = \Pro{M \geq \left( 1 - \frac{1}{2} \right) \cdot c \cdot m} \geq 1 - e^{-1/8}.
\]
By combining the last two inequalities, we get the conclusion.
\end{proof}

Similarly, we prove tight bounds for the number of bins with an underload of at least $- z$ for $z \in (0, \Oh(\log n))$.

\begin{lem} \label{lem:lower_bound_underload_height}
Consider any allocation process which in each round allocates to a subset of a constant number of $d$ bins sampled uniformly at random and allocates on aggregate at least one unit weight. Then,  for any $z \in (0, \frac{1}{8d} \log n)$  and $m := n \cdot z$,
\[
  \Pro{\Bigl| i \in [n] : y_i^m \leq -z \Bigr| \geq \frac{1}{2} n  \cdot e^{-2dz}} \geq 1 - n^{-1}.
\]
\end{lem}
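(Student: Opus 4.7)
The plan is to count bins that are never sampled during the first $m = nz$ rounds. For such a bin~$i$ we have $x_i^m = 0$ and hence $y_i^m = -W^m/n \leq -m/n = -z$, using the assumption that at least one unit of weight is allocated per round, so $W^m \geq m$. Thus the set of bins that are never sampled is a subset of the set we are trying to lower-bound, and it suffices to count the former.

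First, I would fix a bin $i$ and let $X_i$ be the indicator that $i$ is not among the (at most) $d$ uniformly sampled bins in any of the $m$ rounds. Since the set of $d$ samples in each round is drawn uniformly and independently of the history (the load-dependent allocation rule is applied only after the $d$ bins are chosen), the per-round non-selection probability is at least $1 - d/n$ regardless of whether sampling is with or without replacement. Combining over $m$ independent rounds,
\[
\Pro{X_i = 1} \geq (1 - d/n)^m \geq e^{-2dm/n} = e^{-2dz},
\]
where I use the inequality $1 - x \geq e^{-2x}$ valid for $x \in [0, 1/2]$ (which applies for sufficiently large~$n$). Summing over $i \in [n]$, $\Ex{X} \geq n \cdot e^{-2dz}$ for $X := \sum_{i=1}^n X_i$.

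The remaining step is concentration of $X$ around its mean. I would view $X$ as a function of the $m$ independent per-round sample tuples $(S_1, \ldots, S_m)$, and apply McDiarmid's bounded-differences inequality. Changing a single $S_t$ alters the status of at most $2d$ bins (at most $d$ bins become sampled in round $t$ and at most $d$ previously sampled ones become unsampled), so the Lipschitz constant is $2d$. This gives
\[
\Pro{X \leq \Ex{X}/2} \leq \exp\!\left(-\frac{(n e^{-2dz}/2)^2}{2 m (2d)^2}\right) = \exp\!\left(-\frac{n \cdot e^{-4dz}}{8 z d^2}\right).
\]
Using the precondition $z \leq \frac{1}{8d}\log n$, we have $e^{-4dz} \geq n^{-1/2}$, so the exponent is at least $\sqrt{n}/(8 z d^2) \geq \log n$ for all sufficiently large $n$, yielding a failure probability of at most $n^{-1}$, as required.

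The main subtlety is handling the correlations among the $X_i$: within a single round they are negatively correlated (if sampling is without replacement), and across rounds they are dependent because the same bin appears in all $X_j$. A direct second-moment argument would be delicate and would lose factors. The cleanest route is to apply McDiarmid to the independent round-samples rather than to the $X_i$'s themselves; the small Lipschitz constant $2d = O(1)$ is exactly what keeps the McDiarmid tail small enough to dominate the $1/\sqrt{n}$ loss from the range $z \leq \frac{1}{8d}\log n$.
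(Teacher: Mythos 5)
Your proposal is correct and follows essentially the same route as the paper: count the bins never sampled in the first $m$ rounds (each such bin has $y_i^m \leq -m/n = -z$), lower-bound the per-bin probability by $(1-d/n)^m \geq e^{-2dz}$, and concentrate via the bounded-differences inequality over the independent samples. The only differences are cosmetic (you group the samples by round with Lipschitz constant $2d$ rather than treating all $md$ samples individually with constant $1$, and the denominator in your McDiarmid exponent should be $32zd^2$ rather than $8zd^2$ --- a constant that does not affect the conclusion).
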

\begin{proof}
Consider any fixed bin $i \in [n]$. If this bin is not sampled in any of the first $m$ steps, then we have that 
\[
  y_i^m \leq - \frac{m}{n} = -z, 
\]
since in each step we are allocating at least one unit weight ball. This occurs with probability
\[
  p := \left( 1 - \frac{d}{n} \right)^m \geq \left( e^{-\frac{d}{n} -\frac{d^2}{n^2}} \right)^m
  \geq \left( e^{-2\frac{d}{n}} \right)^m
  = e^{-2d \cdot \frac{m}{n}},
\]
using that $1-x \geq e^{-x-x^2} $ for any $x \in (0,0.5)$.
Let $f := f(Z_{11}, \ldots, Z_{1d}, \ldots, Z_{m1}, \ldots , Z_{md})$ be the number of bins that were not sampled in the first $m$ steps. So,
\[
\Ex{f} = n \cdot p \geq n \cdot e^{-2d \cdot \frac{m}{n}}.
\]
Then, changing any one of the $md$ samples can change $f$ by at most one, so for any $(i,j) \in [m] \times [d]$,
\[
 \max_{z_{1,1}, \ldots, z_{m,d} \in [n]} %
\max_{z_{i,j}, z_{i,j}' \in [n]} \left| f(Z_{1,1}, \ldots, z_{i,j}, \ldots , Z_{m,d}) - f(Z_{1,1}, \ldots, z_{i,j}', \ldots , Z_{m,d}) \right| \leq 1.
\]
Therefore, applying the method of bounded differences (\cref{lem:mobd}) gives
\[
  \Pro{\Bigl| i \in [n] : y_i^m \leq -z \Bigr| \geq \frac{1}{2} n  \cdot e^{-2d z}} \geq \exp\left(- \frac{n^2 \cdot e^{-4d z}}{2n} \right) \geq 1 - n^{-1},
\]
using that $z < \frac{1}{8d} \log n$.
\end{proof}

\section{Conclusions} \label{sec:conclusion}

In this work we improved the drift theorem for balanced allocation processes from~\cite{PTW15}. This allowed us to obtain new bounds for a wider variety of processes and improve bounds on existing processes. 

We believe this drift theorem will be helpful in analyzing other allocation processes and possibly even more general settings that go beyond balls-and-bins. %
One specific open problem is to establish tight bounds for $\Quantile(\delta)$ with non-constant $\delta \in (0, 1)$. In particular, we think there exist choices of $\delta$ for which the gap would match the general lower bound of $\Omega(\log n/\log \log n)$, which holds for any \TwoThinning process. Finally, it is also open to establish tight bounds for the \KQuantile process~\cite{LS22Queries} with $k > 1$ and $\DThinning$ with $d > 2$. Another avenue for future work is to study allocation processes where balls can be removed or the weights of the balls are chosen by an adversary (instead of being sampled randomly).

\addcontentsline{toc}{section}{Bibliography}
\renewcommand{\bibsection}{\section*{Bibliography}}
\setlength{\bibsep}{0pt plus 0.3ex}
\bibliographystyle{ACM-Reference-Format-CAM}
\bibliography{bibliography}

\clearpage

\appendix

\section{Tools}

\subsection{Auxiliary Probabilistic Claims}

For convenience, we add the following well-known inequality for a sequence of random variables, whose expectations are related through a recurrence inequality.

\begin{lem} \label{lem:geometric_arithmetic}
Consider any sequence of random variables $(X^i)_{i \in \mathbb{N}}$ for which there exist $a \in (0, 1)$ and $b > 0$, such that every $i \geq 1$,
\[
\Ex{X^i \mid X^{i-1}} \leq X^{i-1} \cdot a + b.
\]
Then, if $X^0 \leq \frac{b}{1-a}$ holds, then for every $i \geq 0$,
\[
\Ex{X^i} \leq \frac{b}{1 - a}.
\]
\end{lem}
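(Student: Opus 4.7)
The plan is to proceed by a straightforward induction on $i \geq 0$, using the tower property of conditional expectation to convert the given one-step recurrence into a recurrence on the unconditional expectations $\Ex{X^i}$.

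For the base case $i = 0$, the hypothesis $X^0 \leq \frac{b}{1-a}$ yields $\Ex{X^0} \leq \frac{b}{1-a}$ immediately (treating $X^0$ as deterministic or noting the bound holds almost surely). For the inductive step, assume $\Ex{X^{i-1}} \leq \frac{b}{1-a}$. Taking expectations on both sides of the given recurrence and using the tower property,
\[
\Ex{X^i} = \Ex{\Ex{X^i \mid X^{i-1}}} \leq a \cdot \Ex{X^{i-1}} + b \leq a \cdot \frac{b}{1-a} + b = \frac{ab + b(1-a)}{1-a} = \frac{b}{1-a},
\]
which closes the induction. The key algebraic observation is simply that $\frac{b}{1-a}$ is the fixed point of the map $x \mapsto a x + b$, so any value at or below this fixed point remains at or below it under the recurrence.

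There is no real obstacle here; the only subtlety worth flagging is that the recurrence is stated conditionally on $X^{i-1}$ (rather than on the full filtration), but since the conditional expectation inequality implies the inequality after taking a further expectation, this is immaterial. One could alternatively iterate the recurrence explicitly to obtain
\[
\Ex{X^i} \leq a^i \cdot X^0 + b \cdot \sum_{j=0}^{i-1} a^j = a^i \cdot X^0 + b \cdot \frac{1-a^i}{1-a},
\]
and then use $X^0 \leq \frac{b}{1-a}$ to bound this by $\frac{b}{1-a}$; this is a useful alternative form but the inductive argument above is cleaner and suffices for the stated conclusion.
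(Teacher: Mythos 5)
Your proof is correct and follows essentially the same route as the paper: induction on $i$, using the tower property to pass from the conditional recurrence to $\Ex{X^i} \leq a \cdot \Ex{X^{i-1}} + b$, and then the fixed-point observation that $\frac{b}{1-a}$ is preserved by the map $x \mapsto ax + b$. The explicit iterated form you mention is a harmless extra but not needed.
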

\begin{proof}
We will prove this claim by induction. Then, assuming that $\Ex{X^i} \leq \frac{b}{1-a}$ holds for $i \geq 0$, we have for $i+1$
\begin{align*}
\Ex{X^{i+1}}
  & = \Ex{\Ex{X^{i+1} \mid X^{i}}}  \leq \Ex{X^{i} } \cdot a + b \leq \frac{b}{1-a} \cdot a + b = \frac{b}{1-a}. \qedhere 
\end{align*}
\end{proof}

We give a proof for the well-known fact that when $\Ex{e^{\zeta \mathcal{W}}} =\Theta(1)$ then $\ex{\mathcal{W}^4}$ is also bounded. 

\begin{lem}\label{lem:s_bound}
Consider any non-negative random variable $\mathcal{W}$ with $\Ex{e^{\zeta \mathcal{W}}} < \infty$ for some $\zeta > 0$. then 
\[
\Ex{\mathcal{W}^4} < \left(\left(\frac{8}{\zeta}\right) \cdot \log\left(\frac{8}{\zeta}\right) \right)^4 +  \Ex{e^{\zeta \mathcal{W}}}.
\]
\end{lem}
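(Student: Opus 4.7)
The plan is to split the expectation of $\mathcal{W}^4$ into a contribution from small values (bounded by a deterministic threshold) and a contribution from large values (bounded by the exponential moment). The threshold $T := \frac{8}{\zeta} \cdot \log(\frac{8}{\zeta})$ appearing in the statement suggests it is chosen precisely so that $v^4 \leq e^{\zeta v}$ for every $v \geq T$.

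First, I would verify the pointwise inequality $v^4 \leq e^{\zeta v}$ for all $v \geq T$. Taking logarithms, this is equivalent to $4 \log v \leq \zeta v$. At $v = T = \frac{8}{\zeta} \log(8/\zeta)$ we have $\zeta v = 8 \log(8/\zeta)$ and $4 \log v = 4 \log(8/\zeta) + 4 \log \log(8/\zeta)$, so the inequality reduces to $\log(8/\zeta) \geq \log \log(8/\zeta)$, which holds since $x \geq \log x$ for all $x > 0$ (and $8/\zeta > 1$). Monotonicity of the gap $\zeta v - 4 \log v$ for $v \geq \frac{4}{\zeta}$ (its derivative $\zeta - 4/v$ is nonnegative there, and $T \geq 4/\zeta$) then extends the inequality to all $v \geq T$.

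Second, I would decompose
\[
\Ex{\mathcal{W}^4} = \Ex{\mathcal{W}^4 \cdot \mathbf{1}_{\{\mathcal{W} \leq T\}}} + \Ex{\mathcal{W}^4 \cdot \mathbf{1}_{\{\mathcal{W} > T\}}}.
\]
The first term is trivially at most $T^4 = \big(\frac{8}{\zeta} \cdot \log(8/\zeta)\big)^4$. For the second term, the pointwise bound from the previous step gives $\mathcal{W}^4 \cdot \mathbf{1}_{\{\mathcal{W} > T\}} \leq e^{\zeta \mathcal{W}} \cdot \mathbf{1}_{\{\mathcal{W} > T\}} \leq e^{\zeta \mathcal{W}}$, and taking expectation yields $\Ex{\mathcal{W}^4 \cdot \mathbf{1}_{\{\mathcal{W} > T\}}} \leq \Ex{e^{\zeta \mathcal{W}}}$. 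Summing the two bounds gives the claim.

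The only subtle point is the calibration of the constant $8$ inside $T$; it is what makes the slack in $\zeta v \geq 4 \log v$ comfortable enough to handle the $\log \log$ term, so there is no real obstacle, just a short calculation. Everything else is a clean split-and-bound argument.
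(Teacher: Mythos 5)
Your proposal is correct and matches the paper's proof essentially verbatim: both threshold at $T = \frac{8}{\zeta}\cdot\log(8/\zeta)$, bound the contribution of $\{\mathcal{W}\le T\}$ deterministically by $T^4$, and use the pointwise inequality $v^4\le e^{\zeta v}$ for $v\ge T$ to bound the tail by $\Ex{e^{\zeta\mathcal{W}}}$; the only cosmetic difference is that the paper verifies the pointwise bound via $e^{\zeta x/4}=e^{\zeta x/8}\cdot e^{\zeta x/8}\ge \frac{8}{\zeta}\cdot\frac{\zeta x}{8}=x$ rather than by taking logarithms. Note that your verification implicitly assumes $8/\zeta>1$ (and, for the monotonicity step, $T\ge 4/\zeta$), which is harmless here since every application in the paper has $\zeta\in(0,1]$.
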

\begin{proof}
Let $\kappa := (8/\zeta) \cdot \log(8/\zeta)$. Consider any $x \geq \max(0, \kappa) =: \kappa^*$. Then
\[
e^{\zeta x/4} = e^{\zeta x/8} \cdot e^{\zeta x/8} \geq e^{\log(8/\zeta)} \cdot e^{\zeta x/8} \geq \frac{8}{\zeta} \cdot \frac{\zeta x}{8} = x,
\]
using that $e^v \geq v$ for any $v$. Hence,
\[
e^{\zeta x} = (e^{\zeta x / 4})^4 \geq x^4.
\]
Hence, if $p_x$ is the pdf of $\mathcal{W}$, then
\begin{align*}
\Ex{\mathcal{W}^4} 
 & = \int_{x=0}^{\infty} x^4 \cdot p_x dx = \int_{x=0}^{\kappa^*} x^4 \cdot p_x dx + \int_{x=\kappa^*}^{\infty} x^4 \cdot p_x dx \\
 & \leq \int_{x=0}^{\kappa^*} \kappa^4 \cdot p_x dx + \int_{x=\kappa^*}^{\infty} e^{\zeta x} \cdot p_x dx \\
 & \leq \kappa^4 \cdot \int_{x=0}^{\infty} p_x dx + \int_{x=0}^{\infty} e^{\zeta x} \cdot p_x dx = \kappa^4 + \Ex{e^{\zeta \mathcal{W}}}. \qedhere
\end{align*}
\end{proof}

\subsection{Auxiliary Deterministic Inequalities}

\begin{lem}[{\cite[Lemma A.7]{LS22Batched}}]\label{lem:quasilem2}Let $(p_k)_{k=1}^n , (q_k)_{k=1}^n $ be two probability vectors and $(c_k)_{k=1}^n$ be non-negative and non-increasing. Then if $p$ majorizes $q$, then \begin{equation*}
\sum_{k = 1}^n p_k \cdot c_k \geq \sum_{k = 1}^n q_k \cdot c_k.
\end{equation*}
\end{lem}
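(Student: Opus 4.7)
The plan is to prove the inequality via Abel summation (summation by parts), which is the standard route for turning a statement about prefix sums (majorization) into one about weighted sums. Define the partial sums $P_k := \sum_{i=1}^k p_i$ and $Q_k := \sum_{i=1}^k q_i$ with the convention $P_0 = Q_0 = 0$. The majorization hypothesis states $P_k \geq Q_k$ for every $1 \leq k \leq n$, and because $p$ and $q$ are both probability vectors we additionally have the boundary equality $P_n = Q_n = 1$.

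Next I would rewrite each summand using $p_k = P_k - P_{k-1}$ and reorganise:
\[
\sum_{k=1}^n p_k c_k = \sum_{k=1}^n (P_k - P_{k-1}) c_k = \sum_{k=1}^{n-1} P_k (c_k - c_{k+1}) + P_n c_n,
\]
and analogously for $q$. Subtracting the two identities yields
\[
\sum_{k=1}^n p_k c_k - \sum_{k=1}^n q_k c_k = \sum_{k=1}^{n-1} (P_k - Q_k)(c_k - c_{k+1}) + (P_n - Q_n) c_n.
\]
The boundary term $(P_n - Q_n) c_n$ vanishes because $P_n = Q_n$, and every term in the remaining sum is non-negative: the factor $P_k - Q_k \geq 0$ by majorization, and $c_k - c_{k+1} \geq 0$ because $(c_k)$ is non-increasing. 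Hence the right-hand side is non-negative, which is the desired inequality.

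There is no real obstacle here; the only subtlety worth flagging is that the non-negativity hypothesis on $c$ is not actually needed for the argument, since the equality $P_n = Q_n$ eliminates the only term in which the sign of $c_n$ would matter (if instead $p$ and $q$ merely satisfied $P_n \geq Q_n$, then $c_n \geq 0$ would be required to absorb the boundary contribution).
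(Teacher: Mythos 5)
Your proof is correct. The paper itself gives no proof of this lemma (it is cited from an external reference), but the Abel summation argument you give is the standard one: the telescoping identity $\sum_k p_k c_k = \sum_{k=1}^{n-1} P_k (c_k - c_{k+1}) + P_n c_n$ is applied correctly (using $P_0 = 0$), the boundary term cancels since $P_n = Q_n = 1$, and each remaining term is a product of two non-negative factors. Your aside is also accurate: non-negativity of $(c_k)$ is not used anywhere once $P_n = Q_n$ holds, and would only be needed under the weaker hypothesis $P_n \geq Q_n$.
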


\begin{lem} \label{lem:decreasing_fn}
The function $f(z) = z \cdot e^{k/z}$ for any $k > 0$, is decreasing for $z \in (0, k]$.
\end{lem}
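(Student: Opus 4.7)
The plan is to prove this routinely by one-variable calculus: compute $f'(z)$ and check its sign on $(0, k]$. Applying the product rule and chain rule to $f(z) = z \cdot e^{k/z}$ gives
\begin{equation*}
f'(z) \;=\; e^{k/z} + z \cdot e^{k/z} \cdot \left(-\frac{k}{z^2}\right) \;=\; e^{k/z} \cdot \frac{z - k}{z}.
\end{equation*}
Since $e^{k/z} > 0$ and $z > 0$ throughout the domain, the sign of $f'(z)$ is governed by the factor $z - k$. For $z \in (0, k)$ this factor is strictly negative, so $f'(z) < 0$, and at $z = k$ we have $f'(k) = 0$. Hence $f$ is strictly decreasing on the open interval $(0, k)$ and has a critical point at $z = k$, which together give the claim on $(0, k]$.

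There is no real obstacle; the only mild subtlety is that at the right endpoint the derivative vanishes, so "decreasing" must be read in the weak sense at $z = k$ (equivalently, $f$ is strictly decreasing on $(0,k)$ and continuous at $k$, so $f(z) \geq f(k)$ for all $z \in (0, k]$, with equality only at $z = k$). This is exactly the monotonicity used in the proof of \cref{thm:main_ptw} (Case A.2.2), where the argument needs $g(|\mathcal{G}_-|) = |\mathcal{G}_-| \cdot e^{M/|\mathcal{G}_-|}$ to be minimized at the right endpoint of an interval contained in $(0, M]$; the lemma applied with $k := M$ delivers precisely this.
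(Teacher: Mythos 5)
Your proof is correct and follows exactly the paper's own argument: differentiate to get $f'(z) = e^{k/z}\cdot\bigl(1 - \tfrac{k}{z}\bigr)$ and observe this is nonpositive on $(0,k]$. The extra remark about the derivative vanishing at the right endpoint is a harmless clarification and does not change the approach.
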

\begin{proof}
By differentiating,
\[
f'(z) = e^{k/z} - z \cdot e^{k/z} \cdot \frac{k}{z^2} = e^{k/z} \cdot \Big( 1 - \frac{k}{z}\Big).
\]
For $z \in (0, k]$, $f'(z) \leq 0$, so $f$ is decreasing.
\end{proof}

\subsection{Concentration Inequalities}

We start with a well-known form of the Chernoff bound for binomial random variables.

\begin{lem}[Multiplicative Factor Chernoff Binomial Bound~\cite{MRS01}] %
\label{lem:multiplicative_factor_chernoff_for_binomial}
Let $X_1, \ldots , X_n$ be independent binary random variables with $\Pro{X^i = 1} = p$. Then,
\[
\Pro{\sum_{i = 1}^n X^i \leq \frac{np}{e}} 
\leq e^{\left(\frac{2}{e} - 1 \right)np}.
\]
\end{lem}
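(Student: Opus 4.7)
The plan is to apply the classical Cramér--Chernoff exponential moment method to the lower tail of $S := \sum_{i=1}^n X^i$, whose mean is $\mu := np$. For any parameter $t > 0$, Markov's inequality applied to the non-negative random variable $e^{-tS}$ gives
\[
\Pro{S \leq \mu/e} \;=\; \Pro{e^{-tS} \geq e^{-t\mu/e}} \;\leq\; e^{t\mu/e} \cdot \Ex{e^{-tS}}.
\]
By independence of the $X^i$, the moment generating function factorises: $\Ex{e^{-tS}} = \prod_{i=1}^n \Ex{e^{-tX^i}} = (1 - p + p e^{-t})^n$. Using the elementary inequality $1 + x \leq e^x$ with $x := -p(1 - e^{-t})$, this is at most $\exp\bigl(-np(1 - e^{-t})\bigr)$. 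Combining, one has $\Pro{S \leq \mu/e} \leq \exp\bigl(t\mu/e - np(1 - e^{-t})\bigr)$ for every $t > 0$.

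Next I would optimise this bound over $t$. The first-order condition $\mu/e - np \cdot e^{-t} = 0$ rearranges to $e^{-t} = 1/e$, so the optimal choice is simply $t = 1$ (which is exactly the Cramér tilt that moves the mean of the tilted distribution down to the deviation threshold $\mu/e$). Substituting $t = 1$ gives
\[
\Pro{S \leq \mu/e} \;\leq\; \exp\!\bigl(np/e - np(1 - 1/e)\bigr) \;=\; \exp\!\bigl((2/e - 1)\, np\bigr),
\]
which is precisely the stated bound.

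I do not anticipate any genuine obstacle: the proof is an entirely routine application of the Cramér--Chernoff recipe, and the only ``choice'' is the point $t = 1$, which is forced by the deviation $\mu/e$ in the statement. As a sanity check, $2/e - 1 < 0$, so the right-hand side is indeed less than $1$ and decays exponentially in $np$, as a lower-tail concentration bound should. An alternative route would be to quote the standard multiplicative lower-tail Chernoff inequality $\Pro{S \leq (1-\delta)\mu} \leq \bigl(e^{-\delta} (1-\delta)^{-(1-\delta)}\bigr)^{\mu}$ and specialise to $\delta := 1 - 1/e$, which yields the same exponent $(2/e - 1)\,np$ after one line of algebra.
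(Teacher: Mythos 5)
Your proof is correct, and the paper itself offers no proof of this lemma -- it is simply imported from~\cite{MRS01} -- so there is nothing to diverge from. Your argument is the standard Cram\'er--Chernoff method (Markov applied to $e^{-tS}$, factorisation by independence, the bound $1+x\le e^x$, and the optimal tilt $t=1$ forced by the threshold $\mu/e$), the computations check out, and the exponent $(2/e-1)np$ matches the statement exactly.
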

We proceed with the method of bounded independent differences.

\begin{lem}[Method of Bounded Independent Differences {\cite[Corollary 5.2]{DubPan}}]\label{lem:mobd} Let $f$ be a function of $N$ independent random variables $X^1 ,\dots , X^N$, where each $X^i$ takes values in a set $\Omega^i$. Assume that for each $i \in [N]$ there exists a $c_i \geq 0 $ such that
	\[ \left|f(x_1 ,\dots,x_{i-1}, x_{i},x_{i+1},\dots, x_N) - f(x_1 ,\dots,x_{i-1}, x_{i}',x_{i+1},\dots, x_N) \right| \leq  c_i,\] for any $x_1\in \Omega_1,\dots,x_{i-1}\in \Omega_{i-1}, x_{i},x_{i}'\in \Omega_{i} ,x_{i+1}\in \Omega_{i+1},\dots, x_N \in \Omega_N$. Then, for any $\lambda>0$,
	\[\Pro{f < \Ex{f} - \lambda }  \leq \exp\left(- \frac{\lambda^2}{2 \cdot \sum_{i=1}^N c_i^2} \right).\]\end{lem}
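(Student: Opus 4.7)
The plan is to apply the Azuma--Hoeffding inequality to the Doob martingale of $f$ with respect to the revelation order of the $X^i$'s. Concretely, I would set $M_i := \Ex{\left. f(X^1,\ldots,X^N) \,\right|\, X^1,\ldots,X^i}$ for $i=0,1,\ldots,N$, so that $M_0 = \Ex{f}$, $M_N = f$, and $f - \Ex{f}$ telescopes into a sum of martingale differences $D_i := M_i - M_{i-1}$ with respect to the filtration $\mathcal{F}_i := \sigma(X^1,\ldots,X^i)$.

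The crux is showing $|D_i| \leq c_i$ pointwise. Writing $M_i = h_i(X^1,\ldots,X^i)$ with $h_i(x_1,\ldots,x_i) := \Ex{f(x_1,\ldots,x_i,X^{i+1},\ldots,X^N)}$ and invoking independence of the $X^j$, the bounded-differences hypothesis on $f$ transfers to $h_i$ by linearity of expectation: varying the last coordinate changes $h_i$ by at most $c_i$. Since $M_{i-1}$ is a conditional average of values $h_i(X^1,\ldots,X^{i-1},\cdot)$ all lying in an interval of width $c_i$ around $h_i(X^1,\ldots,X^i)$, the bound $|D_i|\leq c_i$ follows. The remaining step is the standard Chernoff-plus-Hoeffding-lemma argument: $\Ex{\left.e^{-sD_i}\,\right|\,\mathcal{F}_{i-1}}\leq \exp(s^2 c_i^2/2)$, iterate the tower property to get $\Ex{e^{-s(f-\Ex{f})}}\leq \exp(\tfrac{s^2}{2}\sum_i c_i^2)$, apply Markov's inequality to $e^{-s(f-\Ex{f})}$, and optimize at $s=\lambda/\sum_i c_i^2$ to obtain $\exp(-\lambda^2/(2\sum_i c_i^2))$.

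The main obstacle is the transfer of the bounded-differences hypothesis (a deterministic statement on $f$) to a pointwise bound on each martingale difference $D_i$ (an object defined via conditional expectations); this is exactly where independence of the $X^j$ is used. Note that the stated form with constant $1/2$ in the exponent matches the two-sided Azuma bound $|D_i|\leq c_i$, rather than the sharper one-sided McDiarmid form $\exp(-2\lambda^2/\sum_i c_i^2)$, which would require the refined observation that each $D_i$ is supported on an interval of total width $c_i$ (not merely $2c_i$).
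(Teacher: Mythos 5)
Your proof is correct: it is the standard Doob-martingale/Azuma argument for the bounded differences inequality, and all the steps (the transfer of the Lipschitz condition to the martingale differences via independence, the Hoeffding-lemma bound $\exp(s^2c_i^2/2)$ for $|D_i|\leq c_i$, and the optimization $s=\lambda/\sum_i c_i^2$) check out. The paper does not prove this lemma at all --- it is quoted verbatim from Dubhashi--Panconesi as a black-box tool --- so there is nothing to compare against; your closing remark that the stated constant corresponds to the two-sided Azuma bound rather than the sharper McDiarmid constant is accurate and consistent with the form the paper cites.
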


\section{Remark on Condition \texorpdfstring{$\mathcal{C}_2$}{C₂}} \label{sec:condition_c2}

\begin{clm} \label{clm:worst_case_process}
Consider a probability vector $p$ satisfying conditions \DZero and \DOne for some constant $\delta$ and any $\eps$. Then, there exists a probability vector $q$ satisfying \COne  with the same $\delta$ and $\eps$ and \CTwo for $C := C(\delta)$, such that $q \succeq p$.
\end{clm}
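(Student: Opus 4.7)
The plan is to construct $q$ explicitly by keeping $p$ unchanged for the first $\lfloor n\delta \rfloor$ indices (where \DZero and \DOne already yield satisfactory prefix bounds) and flattening the tail by replacing $p_{n\delta+1},\ldots,p_n$ by their common mean. Concretely, let $T := 1 - \sum_{i=1}^{n\delta} p_i$ and set
\[
q_i := \begin{cases} p_i & \text{if } 1 \leq i \leq n\delta, \\ \dfrac{T}{n(1-\delta)} & \text{if } n\delta+1 \leq i \leq n. \end{cases}
\]
This is a probability vector by construction, and the work is to verify the three desired properties.

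First I would verify \CTwo. By \DZero together with \DOne, every $i \leq n\delta$ satisfies $p_i \leq p_{n\delta} \leq (1-\eps)/n$, whence $\sum_{i=1}^{n\delta} p_i \leq \delta(1-\eps)$ and $T \geq 1 - \delta(1-\eps)$. In particular $T \leq 1$, so on the tail $q_i \leq 1/(n(1-\delta))$, and on the head $q_i \leq (1-\eps)/n \leq 1/(n(1-\delta))$. This yields \CTwo with $C := 1/(1-\delta) = C(\delta)$.

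Next I would verify \COne and the majorization $q \succeq p$ in parallel. For $k \leq n\delta$ the prefix sums of $q$ and $p$ coincide, so \cref{pro:d0_and_d1_imply_c0_and_c1} applied to $p$ delivers the \COne prefix bound, and the majorization inequality holds with equality. For $k > n\delta$, the \COne suffix bound $\sum_{i=k}^n q_i \geq (1 + \eps\delta/(1-\delta))(n-k+1)/n$ reduces algebraically to $T \geq 1 - \delta(1-\eps)$ and thus already holds. For the majorization at $k > n\delta$ the inequality to establish is $(k-n\delta) \cdot T/(n(1-\delta)) \geq \sum_{i=n\delta+1}^{k} p_i$, which follows because $T/(n(1-\delta))$ is by definition the arithmetic mean of the non-decreasing sequence $p_{n\delta+1},\ldots,p_n$, and any initial partial average of a non-decreasing sequence is bounded by its total average.

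The main obstacle is conceptual rather than computational: choosing a tail replacement that is simultaneously large enough (to secure the \COne suffix bound and the majorization) and small enough (to respect \CTwo). Using the exact tail mean $T/(n(1-\delta))$ is essentially the unique choice that preserves total mass, stays at most $1/((1-\delta)n)$, and dominates every partial average of the non-decreasing tail of $p$. Crucially, \DOne does double duty here: it forces the head mass of $p$ to be small (so $T$ is large, granting the suffix bound) while simultaneously keeping the head entries of $p$ below $(1-\eps)/n \leq 1/((1-\delta)n)$ so that \CTwo holds uniformly across both parts of $q$.
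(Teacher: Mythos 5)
Your proof is correct, but your construction differs from the one in the paper. The paper takes $q$ to be the fixed two-level vector $q_i = \frac{1-\eps}{n}$ for $i \leq n\delta$ and $q_i = \frac{1}{n}\bigl(1+\frac{\delta\eps}{1-\delta}\bigr)$ otherwise, which depends only on $(\delta,\eps)$ and is verified directly to majorize $p$ and to satisfy \COne and \CTwo with $C := 1+\frac{\delta\eps}{1-\delta}$. You instead keep the head of $p$ and flatten its tail to the tail's mean $T/(n(1-\delta))$; majorization then follows from the observation that partial averages of a non-decreasing sequence (here the tail of $p$, via \DZero) are bounded by the total average, and the \COne suffix bound reduces to $T \geq 1-\delta(1-\eps)$, which \DZero and \DOne supply. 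All steps check out, including the \CTwo bound $(1-\eps)/n \leq 1/((1-\delta)n)$ on the head, yielding $C = 1/(1-\delta)$. What the paper's choice buys is that $q$ is a single universal worst-case vector for the whole class of $p$'s satisfying \DZero and \DOne with the given $(\delta,\eps)$ (it majorizes your $q$ as well), which is conceptually cleaner for the majorization argument in \cref{thm:majorization}; what your choice buys is that the prefix part of \COne and the first half of the majorization come for free by construction, at the cost of needing the partial-average lemma for the tail. Either construction establishes \cref{clm:worst_case_process}.
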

\begin{proof}
Such a probability vector (in fact, the worst-case vector) can be defined as \[
q_i := \begin{cases}
    \frac{1-\eps}{n} & \text{if }i \leq n \cdot \delta \\
    \frac{1+\frac{\delta \eps}{1-\delta}}{n} & \text{otherwise}.
\end{cases}
\]
This probability vector majorizes $p$ and satisfies condition \COne with $\delta$ and $\eps$, and also \CTwo with $C := 1+\frac{\delta \eps}{1-\delta}= \Theta(1)$.
\end{proof}

\begin{rem}
Consider any process satisfying conditions \DZero and \DOne in the unit weights case. Then, the analysis in \cite{PTW15} implies an $\Oh(\log n/\eps + \log(1/\eps)/\eps)$ gap. \cref{cor:like_ptw} implies an $\Oh(\log n/\eps)$ gap by majorization (\cref{thm:majorization}) with the worst-case process $\mathcal{Q}$ as defined in \cref{clm:worst_case_process} and using that this process satisfies condition \CTwo with $C := \Theta(1)$.
\end{rem}

\end{document}